\numberwithin{equation}{section}
\newenvironment{abs}{\textbf{Abstract}\mbox{  }}{ }
\newenvironment{key words}{\emph{\texttt{Keywords}}\mbox{  }}{ }
\newtheorem{theorem}{Theorem}[section]
\newtheorem{lemma}[theorem]{Lemma}
\newtheorem{proposition}[theorem]{Proposition}
\newtheorem{definition}[theorem]{Definition}
\renewenvironment{proof}{\noindent{\textbf{Proof.}}}{\hfill$\Box$}
\theoremstyle{remark}
\theoremstyle{plain}
    \newcommand{\Rmnum}[1]{\expandafter\@slowromancap\romannumeral #1@}
\begin{document}

\title{\textbf{Two-bubble nodal solutions for slightly subcritical Fractional Laplacian}
\thanks{The second author was supported by the National Natural Science Foundation of China (Grant Nos.11271299, 11001221) and the Fundamental Research Funds for the Central Universities (Grant No. 3102015ZY069).}}
\author{Qianqiao Guo, Yunyun Hu}
\date{}
\maketitle

\noindent
\begin{abs}
 In this paper, we consider the existence of nodal solutions with two bubbles to the slightly subcritical problem with the fractional Laplacian
\begin{equation*}
\left\{\aligned
&(-\Delta)^su=|u|^{p-1-\varepsilon}u\ \ \mbox{in}\ \Omega,\\
&u=0\ \ \ \ \ \ \ \ \ \ \ \ \ \ \ \ \ \ \ \ \ \mbox{on}\ \partial\Omega,
\endaligned \right.
\end{equation*}
where  $\Omega$ is a smooth bounded domain in $\mathbb R^N$,  $N>2s$, $0<s<1$, $ p=\frac{N+2s}{N-2s}$ and $\varepsilon>0$ is a small parameter, which can be seen as a nonlocal analog of the results of Bartsch, Micheletti and Pistoia (2006) \cite{Bartsch1}.
\end{abs}

\noindent
\begin{key words}
Fractional Laplacian, Nodal solutions, Slightly subcritical problem, Lyapunov-Schmidt reduction
\end{key words}
\indent
\section{\textbf{Introduction}\label{Section 1}}
This paper is devoted to the problem involving Fractional Laplacian
\begin{equation}\label{pro}
\left\{\aligned
&(-\Delta)^su=|u|^{p-1-\varepsilon}u\ \ \ \mbox{in}\ \Omega,\\
&u=0\ \ \ \ \ \ \ \ \ \ \ \ \ \ \ \ \ \ \ \ \ \ \mbox{on}\ \partial\Omega,
\endaligned \right.
\end{equation}
where $\Omega$ is a smooth bounded domain in $\mathbb R^N$,  $N>2s$, $0<s<1$, $ p=\frac{N+2s}{N-2s}$ and $\varepsilon>0$ is a small parameter, $(-\Delta)^s$ stands for the fractional Laplacian operator.

The fractional Laplacian appears in physics, biological modeling, probability and mathematical finance, which is a nonlocal operator. Therefore it is difficult to handle and has attracted much attention in recent years. Importantly, Caffarelli, Silvestre \cite{Caffarelli2} developed an extension method to transform the nonlocal problem into a local one, which helps to study the fractional Laplacian by purely local arguments. By using their extension, many authors studied the existence of solutions to problem $(-\Delta)^su=f(u)$ with $f: \mathbb R^N\rightarrow \mathbb R$. For example, when $s=\frac{1}{2}$, Cabr\'{e} and Tan \cite{Tan17} and Tan \cite{Jing13} established the existence of positive solutions for nonlinear equations having subcritical growth. 

Then it is interesting to study the blow-up phenomenon of solutions to (\ref{pro}) as $\epsilon \to 0^+$. For positive solutions, Chio, Kim and Lee \cite{Chio12} established the asymptotic behavior of least energy solutions and the existence of multiple bubbling solutions. Rois and Luis \cite{Rois15} generalized the work of Chio, Kim and Lee \cite{Chio12}, and took into account both subcritical and supercritical case. These papers are however not concerned with the nodal solutions involving the Fractional Laplacian.

If $s=1$, problem (\ref{pro}) was extensively studied about the blow-up phenomenon of positive and nodal solutions. It was proved in \cite{Rey3,Han4,Brezis5} that as $\varepsilon$ goes to zero, positive solution $u$ to problem (\ref{pro}) blows up and concentrates at a critical point of the Robin's function. Rey \cite{Rey6} considered the positive solutions with double blow-up and showed that the two concentration points $\sigma_1^*$ and $\sigma_2^*$ must be such that $(\sigma_1^*, \sigma_2^*)$ is a critical point of the function
\begin{equation}
\Phi(\sigma_1, \sigma_2)=H^\frac{1}{2}(\sigma_1, \sigma_1)H^\frac{1}{2}(\sigma_2, \sigma_2)-G(\sigma_1, \sigma_2),\ \ \ \ (\sigma_1, \sigma_2)\in\Omega\times\Omega
\end{equation}
and satisfies $\Phi(\sigma_1^*, \sigma_2^*)\geq0$. Here $G$ is the Green's function of the Dirichlet Laplacian and $H$ is its regular part. See also \cite{Bahri7} for the existence of positive solutions with multiple bubbles. In a convex domain, it proved in \cite{Grossi8} that no
positive solutions have multiple bubbles for problem (\ref{pro}). On the other hand, nodal solutions with multiple-bubbles also exist for the problem (\ref{pro}) with $s=1$ in a general smooth bounded domain $\Omega$. As the parameter $\varepsilon$ goes to zero, Bartsch, Micheletti and Pistoia \cite{Bartsch1} proved the existence of nodal solutions which blow up
positively at a point $\sigma_1^*\in\Omega$ and blow up negatively at a point $\sigma_2^*\in\Omega$, with $\sigma_1^*\neq\sigma_2^*$. In \cite{Ben9} Ben Ayed, Mehdi and Pacella classified the nodal solutions according to the concentration speeds of the positive and negative part. Bartsch, D'Aprile and Pistoia in \cite{Bartsch10}
studied the existence of nodal solutions with four bubbles in a smooth bounded domain $\Omega$. When $\Omega$ is a ball, they also proved the nodal with three bubbles in \cite{Bartsch11}.

In this paper, we are interested in the existence of nodal solutions which blow-up and concentrate at two different points of the domain $\Omega$.

In order to state our result,  we introduce some well known notations. Let $G$ be the Green's function of $(-\Delta)^s$ in $\Omega$ with Dirichlet boundary conditions, that is, $G$ satisfies
\begin{equation}\label{Green}
\left\{\aligned
&(-\Delta)^sG(\cdot,y)=\delta_y\ \ \ \mbox{in}\ \Omega,\\
&G(\cdot,y)=0\ \ \ \ \ \ \ \ \ \ \ \ \ \mbox{on}\ \partial\Omega,
\endaligned \right.
\end{equation}
where $\delta_y$ denotes the Dirac mass at the point $y$. The regular part of $G$ is given by

\begin{equation}\label{Green-regular}
H(x,y)=\frac{c_{N,s}}{|x-y|^{N-2s}}-G(x,y)\ \ \ \mbox{where}\ \ c_{N,s}=\frac{2^{1-2s}\Gamma(\frac{N-2s}{2})}{2\pi^{\frac{N}{2}}\Gamma(s)}.
\end{equation}
The diagonal $H(x,x)$ is usually called the Robin's function of the domain $\Omega$.

Now we can state the main result. Let us consider
the function $\varphi:\Omega\times\Omega\rightarrow\mathbb R$ defined by
\begin{equation}\label{phi-function}
\varphi(\sigma_1,\sigma_2)=H^\frac{1}{2}(\sigma_1, \sigma_1)H^\frac{1}{2}(\sigma_2, \sigma_2)+G(\sigma_1, \sigma_2),
\end{equation}
which will play a crucial role in our analysis.

\begin{theorem}\label{theorem-1 bubbles}
Suppose that $0<s<1$ and $N>2s$, then there exists a small number $\varepsilon_0>0$ such that for $0<\varepsilon<\varepsilon_0$, problem (\ref{pro}) has a pair of solutions $u_\varepsilon$ and $-u_\varepsilon$. As $\varepsilon$ goes to zero, $u_\varepsilon$ blows up positively at a point $\sigma_1^*\in\Omega$ and negatively at a point $\sigma_2^*\in\Omega$, where $\varphi(\sigma_1^*,\sigma_2^*)=\min\limits_{\Omega\times\Omega}\varphi$.
\end{theorem}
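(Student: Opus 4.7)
The plan is to carry out a Lyapunov--Schmidt finite-dimensional reduction based on a two-bubble ansatz. Let $U_{\delta,\sigma}(x) = c_{N,s}^{(N-2s)/(4s)}\bigl(\delta/(\delta^2+|x-\sigma|^2)\bigr)^{(N-2s)/2}$ denote the standard Aubin--Talenti bubbles that solve $(-\Delta)^s U = U^p$ on $\mathbb{R}^N$, and let $PU_{\delta,\sigma}$ be its projection onto the Dirichlet space associated with $(-\Delta)^s$ on $\Omega$, i.e.\ the solution of $(-\Delta)^s P U_{\delta,\sigma}=U_{\delta,\sigma}^p$ in $\Omega$ with zero Dirichlet data. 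First I would establish the standard expansion $PU_{\delta,\sigma}(x)=U_{\delta,\sigma}(x)-c_{N,s}\delta^{(N-2s)/2}H(x,\sigma)+O(\delta^{(N+2s)/2})$, which, together with (\ref{Green-regular}), controls how much the boundary correction costs. The ansatz for the nodal solution will be
\begin{equation*}
u = PU_{\delta_1,\sigma_1}-PU_{\delta_2,\sigma_2}+\phi,
\end{equation*}
with concentration parameters of the form $\delta_i=d_i\,\varepsilon^{1/(N-2s)}$, $d_i>0$, and remainder $\phi$ small in the natural fractional Sobolev norm.

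Next I would develop the linear theory. Following Caffarelli--Silvestre, pass to the $s$-harmonic extension in $\mathbb{R}^{N+1}_+$ so that the problem becomes local with a degenerate weight, and introduce the five approximate kernel elements $\partial_{\delta_i}PU_{\delta_i,\sigma_i}$ and $\partial_{\sigma_i^j}PU_{\delta_i,\sigma_i}$ ($i=1,2$, $j=1,\dots,N$). Working in the orthogonal complement $K^\perp$ of the span of these functions, I would prove that the linearization $L_\varepsilon\phi := (-\Delta)^s\phi-(p-\varepsilon)|PU_1-PU_2|^{p-1-\varepsilon}\phi$ is uniformly invertible, with a norm bound independent of $\varepsilon$ provided the concentration points stay in a compact subset of $\Omega$ and are uniformly separated; the key estimates are the non-degeneracy of bubbles for the fractional Laplacian (due to Davila--del Pino--Sire) and a contradiction argument to rule out loss of compactness. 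A standard contraction mapping argument in $K^\perp$ then yields a unique $\phi=\phi_\varepsilon(d_1,\sigma_1,d_2,\sigma_2)$, smooth in its parameters, of size $o(\varepsilon^{1/2})$ say, that solves the projected problem.

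Having reduced the problem to finding critical points of the reduced energy $F_\varepsilon(d_1,\sigma_1,d_2,\sigma_2)=J_\varepsilon(PU_{\delta_1,\sigma_1}-PU_{\delta_2,\sigma_2}+\phi_\varepsilon)$, I would carry out the asymptotic expansion
\begin{equation*}
F_\varepsilon = a_0 + a_1\varepsilon + \varepsilon\Bigl[b_1\log(d_1 d_2)+b_2\bigl(d_1^{N-2s}H(\sigma_1,\sigma_1)+d_2^{N-2s}H(\sigma_2,\sigma_2)\bigr)+b_3(d_1 d_2)^{(N-2s)/2}G(\sigma_1,\sigma_2)\Bigr]+o(\varepsilon),
\end{equation*}
with explicit constants $a_0,a_1,b_1,b_2,b_3$ (with $b_2,b_3>0$ and $b_1<0$); the three nontrivial terms come respectively from the $\varepsilon$-dependent nonlinearity, the self-interaction of each projected bubble (via $H$), and the cross-interaction of the two bubbles (via $G$). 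Optimizing over $(d_1,d_2)$ with $\sigma_1,\sigma_2$ fixed gives $d_i$ explicit as functions of $H(\sigma_i,\sigma_i)$ and $G(\sigma_1,\sigma_2)$, and a straightforward calculation shows the resulting $\varepsilon$-coefficient of $F_\varepsilon$ is an increasing function of the combination $H^{1/2}(\sigma_1,\sigma_1)H^{1/2}(\sigma_2,\sigma_2)+G(\sigma_1,\sigma_2)=\varphi(\sigma_1,\sigma_2)$.

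Since $H(x,x)\to+\infty$ and $G(x,\sigma_2)\to 0$ as $x\to\partial\Omega$, the minimum of $\varphi$ is attained at an interior pair $(\sigma_1^*,\sigma_2^*)$ with $\sigma_1^*\neq\sigma_2^*$ (otherwise $G$ would blow up positively, pushing $\varphi$ up). Choosing $(\sigma_1,\sigma_2)$ in a small neighborhood of $(\sigma_1^*,\sigma_2^*)$, $F_\varepsilon$ inherits a strict interior minimum, which is a critical point; combined with the critical point in $(d_1,d_2)$ obtained from the optimization, this produces a critical point of $F_\varepsilon$ and hence a genuine solution $u_\varepsilon$ of (\ref{pro}) of the required two-bubble form. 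Oddness of the nonlinearity gives the pair $\pm u_\varepsilon$. The main technical obstacle, as usual in such constructions, is the uniform invertibility of the linearized operator $L_\varepsilon$ in $K^\perp$; because of the nonlocal character of $(-\Delta)^s$ the blow-up analysis must be performed either via the Caffarelli--Silvestre extension or via sharp pointwise bounds on the fractional heat/Green kernel, and the fact that our ansatz involves two \emph{sign-changing} bubbles whose supports strongly overlap (unlike the positive two-bubble case) forces a careful handling of the cross-terms in $|PU_1-PU_2|^{p-1-\varepsilon}$ near each concentration point.
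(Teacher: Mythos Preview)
Your proposal is correct and follows essentially the same Lyapunov--Schmidt reduction scheme as the paper: projected two-bubble ansatz with scales $\delta_i\sim\varepsilon^{1/(N-2s)}$, linear invertibility on $K^\perp$ via the Caffarelli--Silvestre extension and the D\'avila--del Pino--Sire non-degeneracy, contraction for $\phi$, and the reduced-energy expansion featuring the $H$, $G$ and $\log(d_1d_2)$ terms, followed by minimization over $(d_1,d_2)$ and then over $(\sigma_1,\sigma_2)$ leading to $\varphi$. The only cosmetic differences are that the paper first rescales to the expanding domain $\Omega_\varepsilon$ and works with the extended functional throughout, and that it records the sharper error bound for $\phi$ (depending on whether $N\gtrless 6s$) rather than a generic $o(\varepsilon^{1/2})$; you should also make explicit that the expansion of the reduced energy holds $C^1$-uniformly in the parameters, since that is what actually transfers the interior minimum of the leading profile to a true critical point of $F_\varepsilon$.
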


The proof of Theorem \ref{theorem-1 bubbles} is motivated the result of Bartsch, Micheletti and Pistoia \cite{Bartsch1} on the local problem, based on a Lyapunov-Schmidt reduction scheme. The main point is to find critical points of the finite dimensional reduced functional corresponding to critical points of the energy function of problem (\ref{pro}). The reduced functional is given in terms of the Green's and Robin's functions. In
subcritical case, the role of Green's and Robin's functions in the concentration phenomena associated to the critical exponent has already been considered in several works, see \cite{Rey3,Han4,Brezis5,Chio12} and \cite{Rey6,Bahri7}. The proofs here borrow ideas of the above mentioned works.

This paper is organized as follows. In section 2, we present some definitions and the basic properties of the fractional Laplacian in bounded domains and in the whole $\mathbb R^N$. Section 3 is devoted to developing the analytical tools toward the main results. Moreover, nodal solutions are constructed by the Lyapunov-Schmidt reduction method. Finally, in the Appendix, some necessary estimates for the construction of the nodal solutions are exhibited.

\section{\textbf{Preliminary }\label{Section 2}}

In this section we review some basic definitions and properties of the fractional Laplacian. We refer to \cite{Colorado16,Tan17,Capella18,Kim19,Tan20,Stinga21,Barrios22} for the details.

Let $\Omega$ be a smooth bounded domain in $\mathbb R^N$. We define $(-\Delta)^s$ through the spectral decomposition using the powers of the eigenvalues of the Laplacian operator $-\Delta$ in $\Omega$. Let $\{\lambda_i,\phi_i\}_{i=1}^\infty$ denote the eigenvalues and eigenfunctions of $-\Delta$ in $\Omega$ with zero Dirichlet boundary condition,

\begin{equation}\label{Dirichlet}
\left\{\aligned
&-\Delta\phi_i=\lambda_i\phi_i\ \ \ \ \mbox{in}\ \Omega,\\
&\phi_i=0\ \ \ \ \ \ \ \ \ \ \ \ \ \ \mbox{on}\ \partial\Omega.
\endaligned \right.
\end{equation}
 The fractional Laplacian is well defined in the fractional Sobolev space $H_{0}^s(\Omega)$,
\[H_{0}^s(\Omega):=\left\{u=\sum_{i=1}^\infty a_i\phi_i \in L^2(\Omega):\sum_{i=1}^\infty a_{i}^2\lambda_{i}^s<\infty\right\},
\]
which is a Hilbert space whose inner product is defined by
\[\left\langle \sum_{i=1}^\infty a_i\phi_i, \sum_{i=1}^\infty b_i\phi_i\right\rangle_{H_{0}^s(\Omega)}=\sum_{i=1}^\infty a_i b_i\lambda_{i}^s.
\]
Moreover, we define fractional Laplacian $(-\Delta)^s:H_{0}^s(\Omega)\rightarrow H_{0}^s(\Omega)$ as:
\[(-\Delta)^s\left(\sum_{i=1}^\infty a_i\phi_i\right)=\sum_{i=1}^\infty a_i \lambda_{i}^s\phi_i.
\]
Note that by the above definitions, we have the following expression for the inner product:
\begin{equation}\label{inner product}
\langle u,v\rangle_{H_{0}^s(\Omega)}=\int_\Omega(-\Delta)^{\frac{s}{2}}u(-\Delta)^{\frac{s}{2}}v=\int_\Omega(-\Delta)^suv,\ \ \ \ u,v \in H_{0}^s(\Omega).
\end{equation}

We will recall an equivalent definition based on an extension problem introduced by Caffarelli and Silvestre \cite{Caffarelli2}. For the sake of simplicity, we denote  $\Omega\times(0,\infty)$ by $\mathcal{C}$ and its lateral boundary $\partial\Omega\times(0,\infty)$ by $\partial_\mathcal{C}$, where $\Omega$ is either a smooth bounded domain or $\mathbb R^N$. If $\Omega$ is a smooth bounded domain, the function space $H_{0,L}^s(\mathcal{C})$ is defined as the completion of

\[C_{\mathcal{C},L}^\infty(\mathcal{C}):=\left\{U\in C^\infty(\bar{\mathcal{C}}):U=0\ \mbox{on}\ \partial_L \mathcal{C}\right\}
\]
with respect to the norm
\begin{equation}\label{norm}
\parallel U\parallel_\mathcal{C}=\left(\frac{1}{k_s}\int_\mathcal{C} t^{1-2s}|\nabla U|^2\right)^\frac{1}{2},
\end{equation}
where $t>0$ represents the last variable in $\mathbb R^{N+1}$ and $k_s$ is a normalization constant (see \cite{Barrios22,Chio12}).
This is a Hilbert space endowed with the following inner product
\[(U,V)_\mathcal{C}=\frac{1}{k_s}\int_\mathcal{C}t^{1-2s}\nabla U\cdot\nabla V
\quad\mbox{for\ all}\ \ U,V\in H_{0,L}^s(\mathcal{C}).
\]
Moreover, in the entire space, we define $\mathcal{D}^s(\mathbb R_{+}^{N+1})$ as the completion of $C_{\mathcal{C}}^\infty(\overline{\mathbb R_{+}^{N+1}})$ with respect to the norm $\parallel U\parallel_{\mathbb R_{+}^{N+1}}$ (defined as in (\ref{norm}) by putting $\mathcal{C}=\mathbb R_{+}^{N+1}$). We know that if $\Omega$ is a smooth bounded domain, then
\begin{equation}\label{trace}
H_{0}^s(\Omega)=\left\{u=\mbox{tr}|_{\Omega\times\{0\}}U:U\in H_{0,L}^s(\mathcal{C})\right\}.
\end{equation}
It also holds that
\[\parallel U(\cdot,0)\parallel_{H^s(\mathbb R^N)}\leq C\parallel U\parallel_{\mathbb R_{+}^{N+1}}
\]
for some $C>0$ independent of $U\in \mathcal{D}^s(\mathbb R_{+}^{N+1})$.

Now we consider the harmonic extension problem. For some given functions $u\in H_{0}^s(\Omega)$, $U\in H_{0,L}^s(\mathcal{C})$ solves the equation
\begin{equation}\label{extension}
\left\{\aligned
&\mbox{div}(t^{1-2s}\nabla U)=0\ \ \ \ \ \mbox{in}\ \mathcal{C},\\
&U=0\ \ \ \ \ \ \ \ \ \ \ \ \ \ \ \ \ \ \ \ \mbox{on}\ \partial_L \mathcal{C},\\
&U(x,0)=u(x)\ \ \ \ \ \ \ \ \ \mbox{on}\ \Omega\times\{0\}
\endaligned \right.
\end{equation}
as a unique solution.
The relevance of the extension function $U$ is that it is related to the fractional Laplacian of the original function $u$ through the formula
\begin{equation}\label{formula}
-\frac{1}{k_s}\lim\limits_{t\rightarrow0^{+}}t^{1-2s}\frac{\partial U}{\partial t}(x,t)=(-\Delta)^su(x),
\end{equation}
where $k_s>0$ depends on $N$ and $s$ (see \cite{Caffarelli2} and \cite{Capella18} for the entire and bounded domain case, respectively). By the above extension, the problem (\ref{pro}) is transformed into its equivalence problem
\begin{equation}\label{equi}
\left\{\aligned
&\mbox{div}(t^{1-2s}\nabla U)=0\ \ \ \ \ \ \ \ \ \ \ \ \ \ \ \  \ \ \ \ \ \ \ \ \ \ \ \ \ \ \ \mbox{in}\ \mathcal{C},\\
&U=0\ \ \ \ \ \ \ \ \ \ \ \ \ \ \ \ \ \ \ \ \ \ \ \ \ \ \ \ \  \ \ \ \ \ \ \ \ \ \ \ \ \ \ \ \ \ \mbox{on}\ \partial_L \mathcal{C},\\
&-\frac{1}{k_s}\lim\limits_{t\rightarrow0^{+}}t^{1-2s}\frac{\partial U}{\partial t}(x,t)=|U|^{p-1-\varepsilon}U\ \ \ \ \mbox{on}\ \Omega\times\{0\}.
\endaligned \right.
\end{equation}

In a completely analogous extension procedure, the Green's function $G$ of the fractional Laplacian $(-\Delta)^s$ defined in (\ref{Green}) can be regarded as the trace of the solution $G_\mathcal{C}(z,y)$ ($z=(x,t)\in \mathcal{C}$, $y\in\Omega$) for the following extended Dirichlet-Neumann problem
\begin{equation}
\left\{\aligned
&\mbox{div}(t^{1-2s}\nabla G_\mathcal{C}(\cdot,y))=0\ \ \ \ \ \ \ \ \ \ \ \ \ \mbox{in}\ \mathcal{C},\\
&G_\mathcal{C}(\cdot,y)=0\ \ \ \ \ \ \ \ \ \ \ \ \ \ \ \ \ \ \ \ \ \ \ \ \ \ \ \ \mbox{on}\ \partial_L \mathcal{C},\\
&-\frac{1}{k_s}\lim\limits_{t\rightarrow0^{+}}t^{1-2s}\frac{\partial G_\mathcal{C}(\cdot,y)}{\partial t}=\delta_y\ \ \ \ \mbox{on}\ \Omega\times\{0\}.
\endaligned \right.
\end{equation}
Moreover, if a function $U$ satisfies
\begin{equation}
\left\{\aligned
&\mbox{div}(t^{1-2s}\nabla U)=0\ \ \ \ \ \ \ \ \ \ \ \ \ \ \ \ \ \ \ \ \ \ \ \mbox{in}\ \mathcal{C},\\
&U=0\ \ \ \ \ \ \ \ \ \ \ \ \ \ \ \ \ \ \ \ \ \ \ \ \ \ \ \ \ \ \ \ \ \ \ \ \ \ \mbox{on}\ \partial_L \mathcal{C},\\
&-\frac{1}{k_s}\lim\limits_{t\rightarrow0^{+}}t^{1-2s}\frac{\partial U_\mathcal{C}(\cdot,y)}{\partial t}=g(x)\ \ \ \ \mbox{on}\ \Omega\times\{0\},
\endaligned \right.
\end{equation}
then we have the following expression
\[U(z)=\int_\Omega G_\mathcal{C}(z,y)g(y)dy=\int_\Omega G_\mathcal{C}(z,y)(-\Delta)^su(y)dy\ \ \ \ \ \ \mbox{for all}\ \ z\in \mathcal{C},
\]
where $u=\mbox{tr}|_{\Omega\times\{0\}}U$.

Green's function $G_\mathcal{C}$ can be partitioned to the singular part and the regular part on $\mathcal{C}$. For the singular part of the Green's function $G_\mathcal{C}$, it can be given by
\begin{equation}
G_{\mathbb R_{+}^{N+1}}(z,y):=\frac{c_{N,s}}{|y-z|^{N-2s}},
\end{equation}
where $c_{N,s}$ is defined in (\ref{Green-regular}), and $G_{\mathbb R_{+}^{N+1}}$ solves the problem
$$\aligned\quad
\left\{\aligned
&\mbox{div}(t^{1-2s}\nabla G_{\mathbb R_{+}^{N+1}}(z,y))=0\ \ \ \ \ \ \ \ \ \ \ \ \ \mbox{in}\ \mathbb  R_{+}^{N+1},\\
&-\frac{1}{k_s}\lim\limits_{t\rightarrow0^{+}}t^{1-2s}\frac{\partial G_{\mathbb R_{+}^{N+1}}(z,y)}{\partial t}=\delta_y\ \ \ \ \mbox{on}\ \Omega\times\{0\}
\endaligned \right.
\endaligned$$
for $y\in \mathbb R^N$.
The regular part can be seen as the unique solution to
$$\aligned\quad
\left\{\aligned
&\mbox{div}(t^{1-2s}\nabla H_\mathcal{C}(z,y))=0\ \ \ \ \ \ \ \ \ \ \mbox{in}\ \mathcal{C},\\
&H_\mathcal{C}(z,y)=\frac{c_{N,s}}{|y-z|^{N-2s}}\ \ \ \ \ \ \ \ \ \ \ \mbox{on}\ \partial_L \mathcal{C},\\
&-\lim\limits_{t\rightarrow0^{+}}t^{1-2s}\frac{\partial H_\mathcal{C}(z,y)}{\partial t}=0\ \ \ \ \ \ \mbox{on}\ \Omega\times\{0\}.
\endaligned \right.
\endaligned$$
Then we have
\begin{equation}\label{regular}
G_\mathcal{C}(z,y)=G_{\mathbb R_{+}^{N+1}}(z,y)-H_\mathcal{C}(z,y).
\end{equation}

Next we present the Sharp Sobolev and trace inequalities (see \cite{Chio12,Cotsiolis23}). Given any $\lambda>0$ and $\xi\in\mathbb R^N$, here
\begin{equation}\label{solution}
w_{\lambda,\xi}(x)=a_{N,s}\left(\frac{\lambda}{\lambda^2+|x-\xi|^2}\right)^\frac{N-2s}{2}
\end{equation}
is an explicit family of solutions to
\begin{equation}\label{critical}
(-\Delta)^su=u^p\ \ \ \ \ \mbox{in}\ \mathbb R^N,\\
\end{equation}
where
$a_{N,s}>0$ (see \cite{Chen24,Li25,Li26} for details).
Then the sharp Sobolev inequality from \cite{Cotsiolis23} is the following:
\begin{equation}\label{Sobolev}
\left(\int_{\mathbb R^N}|u|^{p+1}dx\right)^\frac{1}{p+1}\leq S_{N,s}\left(\int_{\mathbb R^N}|(-\Delta)^\frac{s}{2}u|^2dx\right)^\frac{1}{2}.
\end{equation}
The equality is attained if and only if $u(x)=cw_{\lambda,\xi}(x)$ for any $c>0$, $\lambda>0$ and $\xi\in\mathbb R^N$, where
\[S_{N,s}=2^{-s}\pi^{-s/2}\left[\frac{\Gamma(\frac{N-2s}{2})}{\Gamma(\frac{N+2s}{2})}\right]^\frac{1}{2}\left[\frac{\Gamma(N)}{\Gamma(N/2)}\right]^\frac{s}{N},
\]
(refer to \cite{Carlen27,Frank28,Lieb29}). Now let $W_{\lambda,\xi}\in \mathcal{D}^s(\mathbb R_{+}^{N+1})$ be the s-harmonic extension of $w_{\lambda,\xi}$ satisfying
\begin{equation}\label{s-harmonic}
\left\{\aligned
&\mbox{div}(t^{1-2s}\nabla W_{\lambda,\xi}(x,t))=0\ \ \ \ \  \ \ \ \ \mbox{in}\ \ \mathbb R_{+}^{N+1},\\
&W_{\lambda,\xi}(x,0)=w_{\lambda,\xi}(x)\ \ \ \ \ \ \  \ \ \ \  \ \ \ \ \ \mbox{for}\ x\in\mathbb  R^N.\\
\endaligned \right.
\end{equation}
It implies that the Sobolev trace inequality
\begin{equation}\label{inequality}
\left(\int_{\mathbb R^N}|U(x,0)|^{p+1}dx\right)^\frac{1}{p+1}\leq \frac{S_{N,s}}{\sqrt{k_s}}\left(\int_0^\infty\int_{\mathbb R^N}t^{1-2s}|\nabla U(x,t)|^2dxdt\right)^\frac{1}{2}
\end{equation}
gets the equality if and only if $U(x,t)=cW_{\lambda,\xi}(x,t)$ for any $c>0$, $\lambda>0$ and $\xi\in\mathbb R^N$, where $k_s>0$ is given in (\ref{formula}) (see \cite{Xiao30}).\\

\section{\textbf{The finite dimensional reduction}\label{Section 3}}

In this section we are devoted to proving Theorem \ref{theorem-1 bubbles} by applying the Lyapunov-Schmidt reduction method. Similar methods are used in \cite{Bartsch1,Rey6,Bahri7,Musso31}.

Let $\Omega$ be a smooth bounded domain in $\mathbb R^N$. Set
\begin{equation}\label{set}
\Omega_\varepsilon=\varepsilon^{-\frac{1}{N-2s}}\Omega=\{\varepsilon^{-\frac{1}{N-2s}}x:\ \ x\in\Omega\},
\end{equation}
then the changing variables
\begin{equation}\label{variables}
v(x)=\varepsilon^\frac{1}{2-\frac{\varepsilon(N-2s)}{2s}}u(\varepsilon^\frac{1}{N-2s}x)\ \ \ \ \mbox{for}\ \ x\in\Omega_\varepsilon
\end{equation}
transforms equation (\ref{pro}) into
\begin{equation}\label{transform}
\left\{\aligned
&(-\Delta)^sv=|v|^{p-1-\varepsilon}v\ \ \ \mbox{in}\ \Omega_\varepsilon,\\
&v=0\ \ \ \ \ \ \ \ \ \ \ \ \ \ \ \ \ \ \ \ \ \ \mbox{on}\ \partial\Omega_\varepsilon.
\endaligned \right.
\end{equation}
It follows that $u(x)$ is a solution to (\ref{pro}) if and only if $v(x)=\varepsilon^\frac{1}{2-\frac{\varepsilon(N-2s)}{2s}}u(\varepsilon^\frac{1}{N-2s}x)$ is a solution of equation (\ref{transform}). In the proof of Theorem 1.1, solutions to (\ref{transform}) are close related to the following dilated equation
\begin{equation}\label{dilated equation}
\left\{\aligned
&\mbox{div}(t^{1-2s}\nabla V)=0\ \ \ \ \ \ \ \ \ \ \ \ \ \ \ \  \ \ \ \ \ \ \ \ \ \ \ \ \ \ \mbox{in}\ \mathcal{C}_\varepsilon,\\
&V=0\ \ \ \ \ \ \ \ \ \ \ \ \ \ \ \ \ \ \ \ \ \ \ \ \ \ \ \ \  \ \ \ \ \ \ \ \ \ \ \ \ \ \ \ \ \mbox{on}\ \partial_L \mathcal{C}_\varepsilon,\\
&-\frac{1}{k_s}\lim\limits_{t\rightarrow0^{+}}t^{1-2s}\frac{\partial V}{\partial t}=|V|^{p-1-\varepsilon}V\ \ \ \ \ \ \ \ \ \mbox{on}\ \Omega_\varepsilon\times\{0\}.
\endaligned \right.
\end{equation}
It is easy to know that if $V$ is the solution of (\ref{dilated equation}), then $U(x)=\varepsilon^{-\frac{1}{2-\frac{\varepsilon(N-2s)}{2s}}}V(\varepsilon^{-\frac{1}{N-2s}}x)$ solve the problem (\ref{equi}). To look for the solutions that satisfy the equation (\ref{equi}), it suffices to apply the Lyapunov-Schmidt reduction method to the extended problem (\ref{dilated equation}).
Moreover, it is easy to know that the harmonic extension $V$ of function $v$ satisfies the problem
\begin{equation}\label{harmonic extension}
\left\{\aligned
&\mbox{div}(t^{1-2s}\nabla V)=0\ \ \ \ \mbox{in}\ \mathcal{C}_\varepsilon,\\
&V=0\ \ \ \ \ \ \ \ \ \ \ \ \ \ \ \ \ \ \ \mbox{on}\ \partial_L \mathcal{C}_\varepsilon,\\
&V(x,0)=v(x)\ \ \ \ \ \ \ \ \mbox{on}\ \Omega_\varepsilon\times\{0\},
\endaligned \right.
\end{equation}
where $\mathcal{C}_\varepsilon=\varepsilon^{-\frac{1}{N-2s}}\mathcal{C}=\{\varepsilon^{-\frac{1}{N-2s}}(x,t):\ \ (x,t)\in\mathcal{C}\}$.

Let us recall the the functions $w_{\lambda,\xi}$ and $W_{\lambda,\xi}$ defined in (\ref{solution}) and (\ref{s-harmonic}). By the result of \cite{del32}, it is known that the kernel of the operator $(-\Delta)^s-pw_{\lambda,\xi}^{p-1}$ is spanned by the functions
\begin{equation}\label{span}
\frac{\partial w_{\lambda,\xi}}{\partial\xi_1},\ \cdot\cdot\cdot,\frac{\partial w_{\lambda,\xi}}{\partial\xi_N}\ \ \mbox{and}\ \ \frac{\partial w_{\lambda,\xi}}{\partial\lambda},
\end{equation}
namely they satisfy the equation
\begin{equation}\label{namely}
(-\Delta)^s\phi=pw_{\lambda,\xi}^{p-1}\phi \ \ \ \ \mbox{in}\ \mathbb R^N,
\end{equation}
where $\xi=(\xi_1,\cdot\cdot\cdot\cdot\cdot,\xi_N)$ in $\mathbb R^N$. We also have that all bounded solutions of the extended problem of (\ref{namely})
\begin{equation}\label{bounded solutions}
\left\{\aligned
&\mbox{div}(t^{1-2s}\nabla \Phi)=0\ \ \ \ \ \ \ \ \ \ \ \ \ \ \ \  \ \ \ \ \ \ \ \mbox{in}\ \mathbb R_{+}^{N+1},\\
&-\frac{1}{k_s}\lim\limits_{t\rightarrow0^{+}}t^{1-2s}\frac{\partial \Phi}{\partial t}=pw_{\lambda,\xi}^{p-1}\Phi\ \ \ \ \ \ \mbox{on}\ \mathbb R^N\times\{0\}
\endaligned \right.
\end{equation}
consist of the linear combinations of the functions
\begin{equation}\label{linear combinations}
\frac{\partial W_{\lambda,\xi}}{\partial\xi_1},\ \ \cdot\cdot\cdot,\frac{\partial W_{\lambda,\xi}}{\partial\xi_N}\ \ \mbox{and}\ \ \frac{\partial W_{\lambda,\xi}}{\partial\lambda}.
\end{equation}
In order to construct the multi-bubble nodal solutions of (\ref{equi}), for $\eta\in(0,1)$, we define the admissible set
\begin{align}\label{admissible set}
&\mathcal{O}_\eta=\{(\mbox{\boldmath $\lambda$},\mbox{\boldmath $\sigma$}):=((\lambda_1,...,\lambda_k), (\sigma_1,...,\sigma_k))\in\mathbb R_{+}^{k}\times\Omega^k,
\ \ \sigma_i=(\sigma_{i}^1,...,\sigma_{i}^N),\nonumber\\
&\mbox{dist}(\sigma_i,\partial\Omega)>\eta, \ \ \eta<\lambda_i<\frac{1}{\eta}, \ \ |\sigma_i-\sigma_j|>\eta, \ \ i\neq j, \ \ i,j=1,...,k \}.
\end{align}

It is useful to rewrite problem (\ref{pro}) in a different setting. To this end, let us introduce the following operator.

\begin{definition}\label{definintion-operator L-1}
Let the map
\begin{equation}\label{map}
i_{\varepsilon}^*:L^\frac{2N}{N+2s}(\Omega_\varepsilon)\rightarrow H_{0,L}^s(\mathcal{C}_\varepsilon)
\end{equation}
be the adjoint operator of the Sobolev trace embedding
\begin{equation}
i_{\varepsilon}:H_{0,L}^s(\mathcal{C}_\varepsilon)\rightarrow L^\frac{2N}{N-2s}(\Omega_\varepsilon)\nonumber
\end{equation}
defined by the
\begin{equation}
i_{\varepsilon}(V)=\mbox{tr}|_{\Omega_\varepsilon\times\{0\}}(V)\nonumber\ \ \ \ \ \mbox{for}\ \ V\in H_{0,L}^s(\mathcal{C}_\varepsilon),
\end{equation}
which comes from the inequality (\ref{inequality}), that is, for some $v\in L^\frac{2N}{N+2s}(\Omega_\varepsilon)$ and $Z\in H_{0,L}^s(\mathcal{C}_\varepsilon)$,
\begin{equation}
i_{\varepsilon}^*(v)=Z\nonumber
\end{equation}
if and only if
\begin{equation}
\left\{\aligned
&\mbox{div}(t^{1-2s}\nabla Z)=0\ \ \ \ \ \ \ \ \ \ \ \ \mbox{in}\ \mathcal{C}_\varepsilon,\\
&Z=0\ \ \ \ \ \ \ \ \ \ \ \ \ \ \ \ \ \ \ \ \ \ \ \ \ \ \mbox{on}\ \partial_L \mathcal{C}_\varepsilon,\\
&-\frac{1}{k_s}\lim\limits_{t\rightarrow0^{+}}t^{1-2s}\frac{\partial Z}{\partial t}=v\ \ \ \ \mbox{on}\ \Omega_\varepsilon\times\{0\}.\nonumber
\endaligned \right.
\end{equation}
\end{definition}
By the definition of the operator $i_{\varepsilon}^*$, solving problem (\ref{dilated equation}) is equivalent to find a solution of the fixed point problem
\begin{equation}
V=k_s i_{\varepsilon}^*\left(f_\varepsilon\left(i_{\varepsilon}(V)\right)\right),\ \ \ V\in H_{0,L}^s(\mathcal{C}_\varepsilon),
\end{equation}
where $f_\varepsilon(s)=|s|^{p-1-\varepsilon}s$. Notice that from (\ref{trace}) we have $i_{\varepsilon}:H_{0,L}^s(\mathcal{C}_\varepsilon)\rightarrow H_{0}^s(\Omega_\varepsilon)\subset L^\frac{2N}{N-2s}(\Omega_\varepsilon)$ and so $(-\Delta)^s(i_{\varepsilon}(U))$ makes sense.

We look for solutions of (\ref{transform}) of the form
\begin{equation}
v=\sum_{i=1}^ka_i\mathcal{P}_\varepsilon w_{\lambda_i,\delta_i}+\phi_\varepsilon,\nonumber
\end{equation}
for $k\geq1$ a fixed integer and $a_1,...,a_k\in\{\pm1\}$ fixed, where $\phi_\varepsilon$ is a lower order term and $\mathcal{P}_\varepsilon: H^s(\mathbb R^N)\rightarrow H_{0}^s(\Omega_\varepsilon)$ is the projection defined by the equation
\begin{equation}
\left\{\aligned
&(-\Delta)^s\mathcal{P}_\varepsilon w_i=(-\Delta)^sw_i \ \ \ \mbox{in}\ \Omega_\varepsilon,\\
&\mathcal{P}_\varepsilon w_i=0  \ \ \ \ \ \ \ \ \ \ \ \ \ \ \ \ \ \ \ \ \ \ \mbox {on} \ \partial\Omega_\varepsilon,\nonumber
\endaligned \right.
\end{equation}
where $w_i=w_{\lambda_i,\delta_i}$, $\delta_i=\varepsilon^{-\frac{1}{N-2s}}\sigma_i\in\Omega_\varepsilon$.

Let us introduce some notations. For $\xi=(\xi^1,...,\xi^N)\in\mathbb R^N$ and $j=1,2,...,N$, we define the functions
\begin{equation}\label{functions}
\Psi_{\lambda,\xi}^0=\frac{\partial W_{\lambda,\xi}}{\partial\lambda},\ \ \Psi_{\lambda,\xi}^j=\frac{\partial W_{\lambda,\xi}}{\partial\xi^j},\ \ \psi_{\lambda,\xi}^0=\frac{\partial w_{\lambda,\xi}}{\partial\lambda},\ \ \psi_{\lambda,\xi}^j=\frac{\partial w_{\lambda,\xi}}{\partial\xi^j}
\end{equation}
and
\begin{equation}\label{notations}
\mathcal{P}_\varepsilon W_{\lambda,\xi}=i_{\varepsilon}^*(w_{\lambda,\xi}^p),\ \ \mathcal{P}_\varepsilon \Psi_{\lambda,\xi}^j=i_{\varepsilon}^*(pw_{\lambda,\xi}^{p-1}\psi_{\lambda,\xi}^j),\ \ \ \ j=0,1,...,N.
\end{equation}
Moreover, we let the functions $\mathcal{P}_\varepsilon w_{\lambda,\xi}$ and $\mathcal{P}_\varepsilon \psi_{\lambda,\xi}^j$ be
\begin{equation}\label{define}
\mathcal{P}_\varepsilon w_{\lambda,\xi}=i_{\varepsilon}(\mathcal{P}_\varepsilon W_{\lambda,\xi}),\ \ \mathcal{P}_\varepsilon \psi_{\lambda,\xi}^j=i_{\varepsilon}(\mathcal{P}_\varepsilon\Psi_{\lambda,\xi}^j),\ \ \ \ \ j=0,1,...,N
\end{equation}
which satisfy the equations $(-\Delta)^su=w_{\lambda,\xi}^p$ and $(-\Delta)^su=pw_{\lambda,\xi}^{p-1}\psi_{\lambda,\xi}^j$ in $\Omega_\varepsilon$, respectively. For the sake of simplicity, we denote
\begin{equation}\label{simplicity}
W_i=W_{\lambda_i,\delta_i},\
\mathcal{P}_\varepsilon W_i=\mathcal{P}_\varepsilon W_{\lambda_i,\delta_i},\ \Psi_i^j=\Psi_{\lambda_i,\delta_i}^j,\ \mathcal{P}_\varepsilon\Psi_i^j=\mathcal{P}_\varepsilon\Psi_{\lambda_i,\delta_i}^j,\ i=1,2,...,k,\ \ j=0,1,...,N.
\end{equation}
for $\delta_i=\varepsilon^{-\frac{1}{N-2s}}\sigma_i\in\Omega_\varepsilon$ and $(\mbox{\boldmath $\lambda$},\mbox{\boldmath $\sigma$})\in\mathcal{O}_\eta$.
Similary, we denote
\begin{equation}\label{Similary}
\mathcal{P}_\varepsilon w_i=\mathcal{P}_\varepsilon w_{\lambda_i,\delta_i},\ \ \mathcal{P}_\varepsilon\psi_i^j=\mathcal{P}_\varepsilon\psi_{\lambda_i,\delta_i}^j,\ \ \ \ \ \ i=1,2,...,k,\ \ j=0,1,...,N.
\end{equation}
Set the space
\begin{equation}\label{space}
\mathcal{K}_{\mbox{\boldmath $\lambda$},\mbox{\boldmath $\sigma$}}^\varepsilon=\{u\in H_{0,L}^1(\mathcal{C}_\varepsilon):\ \ (u,\mathcal{P}_\varepsilon\psi_i^j)_{\mathcal{C}_\varepsilon}=0 ,\ \ i=1,2,...,k,\ \ j=0,1,...,N\},
\end{equation}
where $\varepsilon>0$ and $(\mbox{\boldmath $\lambda$},\mbox{\boldmath $\sigma$})\in\mathcal{O}_\eta$. We also need the following orthogonal projections
\begin{equation}\label{projection}
\Pi_{\mbox{\boldmath $\lambda$},\mbox{\boldmath $\sigma$}}^\varepsilon:\ \ H_{0,L}^s(\mathcal{C}_\varepsilon)\rightarrow\mathcal{K}_{\mbox{\boldmath $\lambda$},\mbox{\boldmath $\sigma$}}^\varepsilon.
\end{equation}

Now if we let the linear operator $L_{\mbox{\boldmath $\lambda$},\mbox{\boldmath $\sigma$}}^\varepsilon:\ \mathcal{K}_{\mbox{\boldmath $\lambda$},\mbox{\boldmath $\sigma$}}^\varepsilon\rightarrow\mathcal{K}_{\mbox{\boldmath $\lambda$},\mbox{\boldmath $\sigma$}}^\varepsilon$ be defined by
\begin{equation}\label{operator}
L_{\mbox{\boldmath $\lambda$},\mbox{\boldmath $\sigma$}}^\varepsilon(\Phi)=\Phi-\Pi_{\mbox{\boldmath $\lambda$},\mbox{\boldmath $\sigma$}}^\varepsilon i_{\varepsilon}^*\left[f'_0(\sum_{i=1}^ka_i\mathcal{P}_\varepsilon w_{\lambda_i,\delta_i})\cdot i_{\varepsilon}(\Phi)\right],
\end{equation}
then we can give an a-prior estimate for $\Phi\in\mathcal{K}_{\mbox{\boldmath $\lambda$},\mbox{\boldmath $\sigma$}}^\varepsilon$.

\begin{lemma}\label{lemma-estimate of error-1}
For any $\eta>0$ there exists sufficiently small $\varepsilon>0$ and a constant $C=C(N,\eta)$ such that, for every $(\mbox{\boldmath $\lambda$},\mbox{\boldmath $\sigma$})\in\mathcal{O}_\eta$, the operator $L_{\mbox{\boldmath $\lambda$},\mbox{\boldmath $\sigma$}}^\varepsilon$ satisfies
\begin{equation}\label{lemma 3.2}
\| L_{\mbox{\boldmath $\lambda$},\mbox{\boldmath $\sigma$}}^\varepsilon(\Phi)\|_{\mathcal{C}_\varepsilon}\geq C\|\Phi\|_{\mathcal{C}_\varepsilon}\ \ \ \ \forall\ \Phi\in\mathcal{K}_{\mbox{\boldmath $\lambda$},\mbox{\boldmath $\sigma$}}^\varepsilon.
\end{equation}
\end{lemma}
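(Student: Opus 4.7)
The plan is to argue by contradiction, following the standard Lyapunov--Schmidt pattern. Suppose the estimate fails: there exist sequences $\varepsilon_n\to 0^+$, $(\boldsymbol{\lambda}^n,\boldsymbol{\sigma}^n)\in\mathcal{O}_\eta$ and $\Phi_n\in\mathcal{K}^{\varepsilon_n}_{\boldsymbol{\lambda}^n,\boldsymbol{\sigma}^n}$ with $\|\Phi_n\|_{\mathcal{C}_{\varepsilon_n}}=1$ and $\|L^{\varepsilon_n}_{\boldsymbol{\lambda}^n,\boldsymbol{\sigma}^n}(\Phi_n)\|_{\mathcal{C}_{\varepsilon_n}}=o(1)$. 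Up to a subsequence one has $\lambda^n_i\to\lambda^*_i\in[\eta,1/\eta]$ and $\sigma^n_i\to\sigma^*_i\in\Omega$ with $|\sigma^*_i-\sigma^*_j|\geq\eta$ for $i\neq j$, while the dilated centers $\delta^n_i=\varepsilon_n^{-1/(N-2s)}\sigma^n_i$ stay mutually separated by a distance tending to infinity, and $\mathcal{C}_{\varepsilon_n}-\delta^n_i$ exhausts $\mathbb{R}^{N+1}_+$.

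For each $i$ I would recentre, setting $\tilde\Phi^i_n(x,t):=\Phi_n(x+\delta^n_i,t)$, and extract a weak limit $\tilde\Phi^i_n\rightharpoonup\tilde\Phi^i$ in $\mathcal{D}^s(\mathbb{R}^{N+1}_+)$. The key asymptotic inputs, which are exactly the kind of estimates the paper defers to its Appendix, are that
\[
\mathcal{P}_{\varepsilon_n}w_i(\cdot+\delta^n_i)\longrightarrow w_{\lambda^*_i,0},\qquad
\mathcal{P}_{\varepsilon_n}\psi^j_i(\cdot+\delta^n_i)\longrightarrow \psi^j_{\lambda^*_i,0}
\]
strongly in $L^{p+1}_{\mathrm{loc}}(\mathbb{R}^N)$, while for $j\neq i$ the projected bubbles $\mathcal{P}_{\varepsilon_n}w_j$ and their $\lambda$-/$\xi$-derivatives vanish uniformly on every compact neighbourhood of $\delta^n_i$ since $|\delta^n_i-\delta^n_j|\to\infty$. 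Rewriting the hypothesis as
\[
\Phi_n=\Pi^{\varepsilon_n}_{\boldsymbol{\lambda}^n,\boldsymbol{\sigma}^n}\,i_{\varepsilon_n}^*\!\Big[f'_0\Big(\sum_j a_j\mathcal{P}_{\varepsilon_n}w_j\Big)\,i_{\varepsilon_n}(\Phi_n)\Big]+o_{\mathcal{C}_{\varepsilon_n}}(1),
\]
and observing that $\Pi^{\varepsilon_n}$ only subtracts a combination of the $\mathcal{P}_{\varepsilon_n}\Psi^j_i$'s (themselves in the limiting kernel), one passes to the weak limit in the corresponding extension problem to find that $\tilde\Phi^i$ is a bounded solution of (\ref{bounded solutions}) at $(\lambda^*_i,0)$. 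The non-degeneracy result of \cite{del32} then says $\tilde\Phi^i$ lies in the span of the $\Psi^j_{\lambda^*_i,0}$'s, and passing to the limit in the orthogonality $(\Phi_n,\mathcal{P}_{\varepsilon_n}\psi^j_i)_{\mathcal{C}_{\varepsilon_n}}=0$ yields $(\tilde\Phi^i,\Psi^j_{\lambda^*_i,0})_{\mathbb{R}^{N+1}_+}=0$ for $j=0,\ldots,N$. Hence $\tilde\Phi^i\equiv 0$ for every $i$.

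To close the argument I would test the identity against $\Phi_n$ itself; since $\Phi_n\in\mathcal{K}^{\varepsilon_n}$ the projection is transparent and the defining property of $i_{\varepsilon_n}^*$ gives
\[
1=\|\Phi_n\|_{\mathcal{C}_{\varepsilon_n}}^2=\int_{\Omega_{\varepsilon_n}} p\Big|\sum_j a_j\mathcal{P}_{\varepsilon_n}w_j\Big|^{p-1}(i_{\varepsilon_n}\Phi_n)^2\,dx+o(1).
\]
Splitting the integral into the balls $B_R(\delta^n_i)$ and their complement and applying H\"older with conjugate exponents $(p+1)/(p-1)$ and $(p+1)/2$, the part outside is controlled by the $L^{p+1}$-tail of the bubbles, which vanishes as $R\to\infty$ uniformly in $n$; inside each ball the change of variable $x\mapsto x+\delta^n_i$, the compact Sobolev trace embedding, and $\tilde\Phi^i\equiv 0$ give $o_n(1)$. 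Sending $n\to\infty$ then $R\to\infty$ yields $1=0$, a contradiction. The principal obstacle is precisely the fine convergence, after recentring, of the projected bubbles $\mathcal{P}_{\varepsilon_n}w_i$ and $\mathcal{P}_{\varepsilon_n}\psi^j_i$ to the standard bubble and its kernel elements in $\mathcal{D}^s(\mathbb{R}^{N+1}_+)$: this is what legitimates both the passage to the limit in the linearized extension equation and the reduction of $\mathcal{K}^{\varepsilon_n}$-orthogonality to half-space kernel-orthogonality, and it hinges on sharp decay of the boundary corrector $w_i-\mathcal{P}_{\varepsilon_n}w_i$ on compacta as $\partial\Omega_{\varepsilon_n}$ recedes to infinity.
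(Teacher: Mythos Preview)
Your contradiction/blow-up argument is correct and is exactly the standard route for this coercivity lemma; the paper itself omits the proof and simply refers to Lemma~5.1 of \cite{Chio12}, whose argument follows the same pattern you outline (contradiction, recentring at each bubble, weak limit in $\mathcal{D}^s(\mathbb{R}^{N+1}_+)$, non-degeneracy from \cite{del32}, and a tail/compactness splitting to reach $1=0$). The fine convergence of $\mathcal{P}_{\varepsilon_n}w_i$ and $\mathcal{P}_{\varepsilon_n}\psi^j_i$ that you flag as the main obstacle is supplied in the present paper by Lemmas~\ref{lemma A.1}--\ref{lemma A.4} of the Appendix.
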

\begin{proof} We omit it since it is similarly to Lemma 5.1 in \cite{Chio12}.
\end{proof}

\begin{proposition} \label{proposition-invertibility}
The inverse $(L_{\mbox{\boldmath $\lambda$},\mbox{\boldmath $\sigma$}}^\varepsilon)^{-1}$ of $L_{\mbox{\boldmath $\lambda$},\mbox{\boldmath $\sigma$}}^\varepsilon:\ \mathcal{K}_{\mbox{\boldmath $\lambda$},\mbox{\boldmath $\sigma$}}^\varepsilon\rightarrow\mathcal{K}_{\mbox{\boldmath $\lambda$},\mbox{\boldmath $\sigma$}}^\varepsilon$ exists for any $\varepsilon>0$ small and $(\mbox{\boldmath $\lambda$},\mbox{\boldmath $\sigma$})\in\mathcal{O}_\eta$. Besides, if $\varepsilon$ is small enough, its operator norm is uniformly bounded in $\varepsilon$ and $(\mbox{\boldmath $\lambda$},\mbox{\boldmath $\sigma$})\in\mathcal{O}_\eta$.
\end{proposition}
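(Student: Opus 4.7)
The plan is to deduce the proposition from Lemma \ref{lemma-estimate of error-1} by combining a Fredholm alternative argument with the uniform a priori estimate. Write
\[
L_{\boldsymbol{\lambda},\boldsymbol{\sigma}}^\varepsilon = \mathrm{Id} - T_{\boldsymbol{\lambda},\boldsymbol{\sigma}}^\varepsilon, \qquad
T_{\boldsymbol{\lambda},\boldsymbol{\sigma}}^\varepsilon(\Phi) = \Pi_{\boldsymbol{\lambda},\boldsymbol{\sigma}}^\varepsilon\, i_\varepsilon^{*}\!\left[f'_0\!\Big(\sum_{i=1}^k a_i \mathcal{P}_\varepsilon w_{\lambda_i,\delta_i}\Big)\cdot i_\varepsilon(\Phi)\right].
\]
The first step is to verify that $T_{\boldsymbol{\lambda},\boldsymbol{\sigma}}^\varepsilon:\mathcal{K}_{\boldsymbol{\lambda},\boldsymbol{\sigma}}^\varepsilon\to \mathcal{K}_{\boldsymbol{\lambda},\boldsymbol{\sigma}}^\varepsilon$ is compact. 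The trace operator $i_\varepsilon$ maps $H_{0,L}^s(\mathcal{C}_\varepsilon)$ continuously into $L^{2N/(N-2s)}(\Omega_\varepsilon)$, and $f'_0(s)=p|s|^{p-1}$ with $\sum a_i \mathcal{P}_\varepsilon w_i \in L^{2N/(N-2s)}$, so $f'_0(\sum a_i\mathcal{P}_\varepsilon w_i)\in L^{N/(2s)}$. H\"older's inequality with exponents $N/(2s)$ and $2N/(N-2s)$ shows that multiplication by this weight sends $i_\varepsilon(\Phi)$ to $L^{2N/(N+2s)}(\Omega_\varepsilon)$, from which $i_\varepsilon^*$ carries everything back into $H_{0,L}^s(\mathcal{C}_\varepsilon)$. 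Compactness is obtained by taking a sequence $\Phi_n\rightharpoonup 0$, observing that the bubbles $\mathcal{P}_\varepsilon w_i$ are concentrated in a fixed compact region (after splitting into a ball of large radius around each $\delta_i$ and its complement, using the $|y|^{-N-2s}$ decay of the bubble to gain smallness outside, and local compactness of the subcritical trace embedding inside), so that $T_{\boldsymbol{\lambda},\boldsymbol{\sigma}}^\varepsilon(\Phi_n)\to 0$ strongly in $\mathcal{C}_\varepsilon$-norm.

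Once compactness is in hand, the Fredholm alternative on the Hilbert space $\mathcal{K}_{\boldsymbol{\lambda},\boldsymbol{\sigma}}^\varepsilon$ reduces invertibility of $L_{\boldsymbol{\lambda},\boldsymbol{\sigma}}^\varepsilon$ to injectivity. But injectivity is immediate from Lemma \ref{lemma-estimate of error-1}: if $L_{\boldsymbol{\lambda},\boldsymbol{\sigma}}^\varepsilon(\Phi)=0$ then $0=\|L_{\boldsymbol{\lambda},\boldsymbol{\sigma}}^\varepsilon(\Phi)\|_{\mathcal{C}_\varepsilon}\geq C\|\Phi\|_{\mathcal{C}_\varepsilon}$, forcing $\Phi=0$. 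Consequently $L_{\boldsymbol{\lambda},\boldsymbol{\sigma}}^\varepsilon$ is a bijection of $\mathcal{K}_{\boldsymbol{\lambda},\boldsymbol{\sigma}}^\varepsilon$ onto itself, and the bounded inverse theorem yields $(L_{\boldsymbol{\lambda},\boldsymbol{\sigma}}^\varepsilon)^{-1}$.

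The uniform bound on the operator norm of the inverse is then read off from Lemma \ref{lemma-estimate of error-1}: setting $\Phi=(L_{\boldsymbol{\lambda},\boldsymbol{\sigma}}^\varepsilon)^{-1}(\Psi)$ gives
\[
\|(L_{\boldsymbol{\lambda},\boldsymbol{\sigma}}^\varepsilon)^{-1}(\Psi)\|_{\mathcal{C}_\varepsilon} = \|\Phi\|_{\mathcal{C}_\varepsilon} \leq C^{-1}\|L_{\boldsymbol{\lambda},\boldsymbol{\sigma}}^\varepsilon(\Phi)\|_{\mathcal{C}_\varepsilon} = C^{-1}\|\Psi\|_{\mathcal{C}_\varepsilon},
\]
with $C=C(N,\eta)$ independent of $\varepsilon$ and of $(\boldsymbol{\lambda},\boldsymbol{\sigma})\in\mathcal{O}_\eta$.

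The main obstacle I expect is the compactness verification, because the trace embedding $H_{0,L}^s(\mathcal{C}_\varepsilon)\hookrightarrow L^{2N/(N-2s)}(\Omega_\varepsilon)$ is at the critical exponent and therefore not itself compact, and the domain $\Omega_\varepsilon$ is expanding as $\varepsilon\to 0$. The remedy is to exploit the fact that $f'_0\!\big(\sum a_i\mathcal{P}_\varepsilon w_i\big)$ is essentially supported near the finite set $\{\delta_i\}$ with integrable $L^{N/(2s)}$ tails: outside a large ball around the $\delta_i$ the $L^{N/(2s)}$-mass of this weight can be made arbitrarily small (using the explicit $(\lambda_i^2+|x-\delta_i|^2)^{-(N+2s)/2}$ behaviour of $w_{\lambda_i,\delta_i}^{p-1}$), while inside any such ball one has a genuine subcritical embedding $H^s\hookrightarrow L^{q}$ for some $q<2N/(N-2s)$, enough to absorb the weight compactly. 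This argument must be carried out uniformly in $(\boldsymbol{\lambda},\boldsymbol{\sigma})\in\mathcal{O}_\eta$, using the bounds $\eta<\lambda_i<1/\eta$ and $|\sigma_i-\sigma_j|>\eta$ defining $\mathcal{O}_\eta$ to control the profiles independently of $\varepsilon$.
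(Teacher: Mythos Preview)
Your approach is correct and is the standard argument. The paper itself does not give a proof here: it simply refers to Proposition~5.2 of \cite{Chio12}, where exactly this Fredholm alternative plus a priori estimate scheme is carried out. So there is nothing to compare against in the present paper.

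One small clarification regarding what you flag as the ``main obstacle'': for the \emph{invertibility} part you only need compactness of $T_{\boldsymbol{\lambda},\boldsymbol{\sigma}}^\varepsilon$ for each fixed $\varepsilon>0$ (and fixed $(\boldsymbol{\lambda},\boldsymbol{\sigma})$), so the fact that $\Omega_\varepsilon$ expands as $\varepsilon\to 0$ is irrelevant here. For fixed $\varepsilon$, $\Omega_\varepsilon$ is bounded and the weight $f'_0\big(\sum a_i\mathcal{P}_\varepsilon w_i\big)$ is a fixed function in $L^{N/(2s)}(\Omega_\varepsilon)$; density of $L^\infty_c$ in $L^{N/(2s)}$ together with the compact subcritical trace embedding on bounded domains already yields compactness of $T_{\boldsymbol{\lambda},\boldsymbol{\sigma}}^\varepsilon$, with no need for the explicit bubble decay or any uniformity in $\varepsilon$. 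The \emph{uniform} bound on $\|(L_{\boldsymbol{\lambda},\boldsymbol{\sigma}}^\varepsilon)^{-1}\|$ is then obtained exactly as you wrote, directly from the constant $C=C(N,\eta)$ in Lemma~\ref{lemma-estimate of error-1}, and does not rely on any uniform compactness statement.
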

\begin{proof}
The proof is similarly to Proposition 5.2 in \cite{Chio12} and thus is omitted here.
\end{proof}

\begin{proposition}\label{proposition-reducement-1}
For any sufficiently small $\eta>0$, there exist $\varepsilon_0>0$ and a constant $C>0$ such that for any $\varepsilon\in(0,\varepsilon_0)$ and any $(\mbox{\boldmath $\lambda$},\mbox{\boldmath $\sigma$})\in\mathcal{O}_\eta$, there exists a unique solution $\Phi_{\mbox{\boldmath $\lambda$},\mbox{\boldmath $\sigma$}}^\varepsilon\in\mathcal{K}_{\mbox{\boldmath $\lambda$},\mbox{\boldmath $\sigma$}}^\varepsilon$ satisfying
\begin{equation}\label{proposition 3.4.1}
\Pi_{\mbox{\boldmath $\lambda$},\mbox{\boldmath $\sigma$}}^\varepsilon\left\{\sum_{i=1}^ka_i\mathcal{P}_\varepsilon W_i+\Phi_{\mbox{\boldmath $\lambda$},\mbox{\boldmath $\sigma$}}^\varepsilon-i_\varepsilon^*\left[f_\varepsilon\left(\sum_{i=1}^ka_i\mathcal{P}_\varepsilon w_i+i_\varepsilon(\Phi_{\mbox{\boldmath $\lambda$},\mbox{\boldmath $\sigma$}}^\varepsilon)\right)\right]\right\}=0,
\end{equation}
and
\begin{equation}\label{proposition 3.4.2}
\|\Phi_{\mbox{\boldmath $\lambda$},\mbox{\boldmath $\sigma$}}^\varepsilon\|_{\mathcal{C}_\varepsilon}\leq
\left\{\aligned
&C\varepsilon^{\frac{N+2s}{2}\alpha_0}\ \ \ \ \ \ \ \ \ \ \mbox{if}\ \ N>6s,\\
&C({\varepsilon+\varepsilon|\ln\varepsilon|})\ \ \ \ \mbox{if}\ \ N=6s,\\
&C\varepsilon\ \ \ \ \ \ \ \ \ \ \ \ \ \ \ \ \ \ \mbox{if}\ \ 2s<N<6s,
\endaligned \right.
\end{equation}
where $\alpha_0=\frac{1}{N-2s}$. Furthermore, the map $\Phi_{\mbox{\boldmath $\lambda$},\mbox{\boldmath $\sigma$}}^\varepsilon:\ \ \mathcal{O}_\eta\rightarrow\mathcal{K}_{\mbox{\boldmath $\lambda$},\mbox{\boldmath $\sigma$}}^\varepsilon$ is $C^1(\mathcal{O}_\eta)$.
\end{proposition}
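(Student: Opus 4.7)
The plan is to recast equation (\ref{proposition 3.4.1}) as a fixed point problem on a small ball in $\mathcal{K}_{\mbox{\boldmath $\lambda$},\mbox{\boldmath $\sigma$}}^\varepsilon$ and to solve it by the Banach contraction principle, using Proposition \ref{proposition-invertibility} as the linear backbone. Setting $V_0 := \sum_{i=1}^k a_i \mathcal{P}_\varepsilon W_i$ and $v_0 := i_\varepsilon(V_0) = \sum_{i=1}^k a_i \mathcal{P}_\varepsilon w_i$, I would split the bracket in (\ref{proposition 3.4.1}) into a linear piece, a higher-order piece, and an error:
\begin{equation*}
V_0+\Phi - i_\varepsilon^*\!\bigl[f_\varepsilon(v_0 + i_\varepsilon(\Phi))\bigr]
= L_{\mbox{\boldmath $\lambda$},\mbox{\boldmath $\sigma$}}^\varepsilon(\Phi) - \mathcal{N}_\varepsilon(\Phi) - R_\varepsilon,
\end{equation*}
where $R_\varepsilon := \Pi_{\mbox{\boldmath $\lambda$},\mbox{\boldmath $\sigma$}}^\varepsilon\{i_\varepsilon^*[f_\varepsilon(v_0)] - V_0\}$ and
$\mathcal{N}_\varepsilon(\Phi) := \Pi_{\mbox{\boldmath $\lambda$},\mbox{\boldmath $\sigma$}}^\varepsilon i_\varepsilon^*\bigl[f_\varepsilon(v_0+i_\varepsilon(\Phi)) - f_\varepsilon(v_0) - f'_0(v_0)\,i_\varepsilon(\Phi)\bigr]$.
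By Proposition \ref{proposition-invertibility} the problem is then equivalent to finding a fixed point of
$T_\varepsilon(\Phi) := (L_{\mbox{\boldmath $\lambda$},\mbox{\boldmath $\sigma$}}^\varepsilon)^{-1}\bigl[\mathcal{N}_\varepsilon(\Phi) + R_\varepsilon\bigr]$ in $\mathcal{K}_{\mbox{\boldmath $\lambda$},\mbox{\boldmath $\sigma$}}^\varepsilon$.

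The argument would then be closed by two estimates. First, an error bound $\|R_\varepsilon\|_{\mathcal{C}_\varepsilon}\leq \mathcal{E}(\varepsilon)$, where $\mathcal{E}(\varepsilon)$ is the right-hand side of (\ref{proposition 3.4.2}). Since $(-\Delta)^s\mathcal{P}_\varepsilon w_i = w_i^p$, we have $i_\varepsilon^*[w_i^p]=\mathcal{P}_\varepsilon W_i$, so the integrand of $R_\varepsilon$ is
\begin{equation*}
f_\varepsilon\!\Bigl(\sum_i a_i \mathcal{P}_\varepsilon w_i\Bigr) - \sum_i a_i w_i^{p},
\end{equation*}
which I would split into (i) self-interaction terms $f_\varepsilon(a_i\mathcal{P}_\varepsilon w_i) - a_i w_i^p$, where a Taylor expansion in $\varepsilon$ combined with the projection-error bound $w_i - \mathcal{P}_\varepsilon w_i = c_{N,s}\varepsilon^{\alpha_0(N-2s)}H(\sigma_i,\cdot)+o(\cdot)$ produces the $\varepsilon$ factor, and (ii) cross terms controlled by pointwise decay of $w_i$ and values of the Green's function $G(\sigma_i,\sigma_j)$, both of which are the content of the Appendix. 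The trichotomy in (\ref{proposition 3.4.2}) arises from taking the $L^{2N/(N+2s)}$ norm of these integrands: when $p>2$ (i.e.\ $2s<N<6s$) the quadratic expansion loses factors of $\varepsilon$; when $p=2$ ($N=6s$) a borderline logarithm appears; when $p<2$ ($N>6s$) only a H\"older expansion is available and the sharp rate $\varepsilon^{\frac{N+2s}{2}\alpha_0}$ emerges from balancing the Hölder exponent against the $L^{2N/(N+2s)}$ scaling of $w_i$. Second, I would prove the quadratic estimate
\begin{equation*}
\|\mathcal{N}_\varepsilon(\Phi) - \mathcal{N}_\varepsilon(\tilde{\Phi})\|_{\mathcal{C}_\varepsilon}\leq c\bigl(\|\Phi\|_{\mathcal{C}_\varepsilon}+\|\tilde{\Phi}\|_{\mathcal{C}_\varepsilon}\bigr)^{\min(p-1,1)}\|\Phi-\tilde{\Phi}\|_{\mathcal{C}_\varepsilon},
\end{equation*}
by the mean value theorem applied to $f_\varepsilon - f'_0(v_0)\cdot$ and the boundedness of $i_\varepsilon, i_\varepsilon^*$ from (\ref{inequality}) and Definition \ref{definintion-operator L-1}. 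With both estimates in hand, $T_\varepsilon$ maps a ball of radius $2\|(L_{\mbox{\boldmath $\lambda$},\mbox{\boldmath $\sigma$}}^\varepsilon)^{-1}\|\mathcal{E}(\varepsilon)$ into itself and is a strict contraction when $\varepsilon$ is small, which delivers both the unique $\Phi_{\mbox{\boldmath $\lambda$},\mbox{\boldmath $\sigma$}}^\varepsilon$ and the estimate (\ref{proposition 3.4.2}).

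For the $C^1$ regularity of $(\mbox{\boldmath $\lambda$},\mbox{\boldmath $\sigma$}) \mapsto \Phi_{\mbox{\boldmath $\lambda$},\mbox{\boldmath $\sigma$}}^\varepsilon$, I would apply the implicit function theorem to
\begin{equation*}
F(\mbox{\boldmath $\lambda$},\mbox{\boldmath $\sigma$},\Phi) := \Pi_{\mbox{\boldmath $\lambda$},\mbox{\boldmath $\sigma$}}^\varepsilon\!\bigl\{V_0+\Phi - i_\varepsilon^*[f_\varepsilon(v_0+i_\varepsilon(\Phi))]\bigr\},
\end{equation*}
using that $(\mbox{\boldmath $\lambda$},\mbox{\boldmath $\sigma$}) \mapsto \mathcal{P}_\varepsilon W_i, \mathcal{P}_\varepsilon\Psi_i^j$ and the projector $\Pi_{\mbox{\boldmath $\lambda$},\mbox{\boldmath $\sigma$}}^\varepsilon$ are $C^1$ (by smoothness of $W_{\lambda,\xi}$ in its parameters and standard differentiability of $i_\varepsilon^*$), and that $D_\Phi F = L_{\mbox{\boldmath $\lambda$},\mbox{\boldmath $\sigma$}}^\varepsilon$ is uniformly invertible by Proposition \ref{proposition-invertibility}.

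The main obstacle will be the sharp error estimate on $R_\varepsilon$ across the three dimension regimes; the bulk of the work is not the contraction scheme itself (which is now standard for Lyapunov–Schmidt reductions on critical problems) but the pointwise and integral estimates on $w_i-\mathcal{P}_\varepsilon w_i$, on the cross-interactions $\int w_i^{p-1}w_j$, and on $|v_0|^{p-1-\varepsilon}v_0 - |v_0|^{p-1}v_0$, which have to be carried out in a fractional setting where the explicit Poisson-type kernel used for $s=1$ is unavailable. These are the estimates I would record in the Appendix and then feed into the outline above.
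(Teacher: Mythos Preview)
Your approach is essentially the paper's: a contraction-mapping argument on $\mathcal{K}_{\mbox{\boldmath $\lambda$},\mbox{\boldmath $\sigma$}}^\varepsilon$ using the invertibility of $L_{\mbox{\boldmath $\lambda$},\mbox{\boldmath $\sigma$}}^\varepsilon$ from Proposition \ref{proposition-invertibility}, with the error $R_\varepsilon$ controlled by the Appendix lemmas and the $C^1$ dependence obtained by the implicit function theorem. The only differences are organizational---the paper splits $R_\varepsilon$ as $[f_\varepsilon(v_0)-f_0(v_0)]+[f_0(v_0)-\sum_i a_i f_0(w_i)]$ (handled by Lemmas \ref{lemma A.7} and \ref{(M17)-Lemma A}) rather than by self- versus cross-interaction, and it explicitly separates the linear piece $[f_\varepsilon'(v_0)-f_0'(v_0)]\,i_\varepsilon(\Phi)$ (of size $O(\varepsilon|\ln\varepsilon|)\|\Phi\|_{\mathcal{C}_\varepsilon}$ by Lemma \ref{lemma A.7}) out of your $\mathcal{N}_\varepsilon$, a term your stated Lipschitz bound should also include.
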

\begin{proof}
First of all we point out that $\Phi_{\mbox{\boldmath $\lambda$},\mbox{\boldmath $\sigma$}}^\varepsilon$ is a solution of equation (\ref{proposition 3.4.1}) if and only if $\Phi_{\mbox{\boldmath $\lambda$},\mbox{\boldmath $\sigma$}}^\varepsilon$  is a fixed point of operator $T_{\mbox{\boldmath $\lambda$},\mbox{\boldmath $\sigma$}}^\varepsilon:\ \mathcal{K}_{\mbox{\boldmath $\lambda$},\mbox{\boldmath $\sigma$}}^\varepsilon\rightarrow\mathcal{K}_{\mbox{\boldmath $\lambda$},\mbox{\boldmath $\sigma$}}^\varepsilon$ defined by
\begin{align*}
&T_{\mbox{\boldmath $\lambda$},\mbox{\boldmath $\sigma$}}^\varepsilon(\Phi)=
(L_{\mbox{\boldmath $\lambda$},\mbox{\boldmath $\sigma$}}^\varepsilon)^{-1}N_\varepsilon(\Phi)\ \ \ \ \mbox{for}\ \ \Phi\in\mathcal{K}_{\mbox{\boldmath $\lambda$},\mbox{\boldmath $\sigma$}}^\varepsilon,
\end{align*}
where
\begin{align*}
&N_\varepsilon(\Phi)=
\Pi_{\mbox{\boldmath $\lambda$},\mbox{\boldmath $\sigma$}}^\varepsilon i_\varepsilon^*\left[f_\varepsilon\left(\sum_{i=1}^ka_i\mathcal{P}_\varepsilon w_i+i_\varepsilon(\Phi)\right)-\sum_{i=1}^ka_if_0(w_i)-f'_0\left(\sum_{i=1}^ka_i\mathcal{P}_\varepsilon w_i\right)i_\varepsilon(\Phi)\right].
\end{align*}
The claim will follow by showing that $T_{\mbox{\boldmath $\lambda$},\mbox{\boldmath $\sigma$}}^\varepsilon$ is a contraction mapping on $\mathcal{K}_{\mbox{\boldmath $\lambda$},\mbox{\boldmath $\sigma$}}^\varepsilon:=\{\Phi\in\mathcal{K}_{\mbox{\boldmath $\lambda$},\mbox{\boldmath $\sigma$}}^\varepsilon:\ \Phi\ \mbox{satisfies}\ (\ref{proposition 3.4.2})\}$.
By Lemma \ref{lemma-estimate of error-1}, Lemma \ref{(M17)-Lemma A} and (\ref{lemma A.7-1}) in Lemma \ref{lemma A.7}, we get
\begin{align}\label{estimation-1}
&\|T_{\mbox{\boldmath $\lambda$},\mbox{\boldmath $\sigma$}}^\varepsilon(\Phi)\|_{\mathcal{C}_\varepsilon}\leq
C\left\|f_\varepsilon\left(\sum_{i=1}^ka_i\mathcal{P}_\varepsilon w_i+i_\varepsilon(\Phi)\right)-\sum_{i=1}^ka_if_0(w_i)-f'_0\left(\sum_{i=1}^ka_i\mathcal{P}_\varepsilon w_i\right)i_\varepsilon(\Phi)\right\|_{L^\frac{2N}{N+2s}(\Omega_\varepsilon)}\nonumber\\
&\ \ \ \ \ \ \ \ \ \ \ \ \ \ \ \ \ \leq C\Bigg\|f_\varepsilon\left(\sum_{i=1}^ka_i\mathcal{P}_\varepsilon w_i+i_\varepsilon(\Phi)\right)-f_\varepsilon\left(\sum_{i=1}^ka_i\mathcal{P}_\varepsilon w_i\right)\nonumber\\
&\ \ \ \ \ \ \ \ \ \ \ \ \ \ \ \ \ \ \ \ -f'_\varepsilon\left(\sum_{i=1}^ka_i\mathcal{P}_\varepsilon w_i\right)i_\varepsilon(\Phi)\Bigg\|_{L^\frac{2N}{N+2s}(\Omega_\varepsilon)}\nonumber\\
&\ \ \ \ \ \ \ \ \ \ \ \quad\quad\quad +C\left\|\left[f'_\varepsilon\left(\sum_{i=1}^ka_i\mathcal{P}_\varepsilon w_i\right)-f'_0\left(\sum_{i=1}^ka_i\mathcal{P}_\varepsilon w_i\right)\right]i_\varepsilon(\Phi)\right\|_{L^\frac{2N}{N+2s}(\Omega_\varepsilon)}\nonumber\\
&\ \ \ \ \ \ \ \ \ \ \ \quad\quad\quad +C\left\| f_\varepsilon\left(\sum_{i=1}^ka_i\mathcal{P}_\varepsilon w_i\right)-f_0\left(\sum_{i=1}^ka_i\mathcal{P}_\varepsilon w_i\right)\right\|_{L^\frac{2N}{N+2s}(\Omega_\varepsilon)}\nonumber\\
&\ \ \ \ \ \ \ \ \ \ \ \quad\quad\quad +C\left\| f_0\left(\sum_{i=1}^ka_i\mathcal{P}_\varepsilon w_i\right)-\sum_{i=1}^ka_if_0 (w_i)\right\|_{L^\frac{2N}{N+2s}(\Omega_\varepsilon)}.
\end{align}
It is easy to see that
\begin{align}\label{estimation-2}
&\left\|f_\varepsilon\left(\sum_{i=1}^ka_i\mathcal{P}_\varepsilon w_i+i_\varepsilon(\Phi)\right)-f_\varepsilon\left(\sum_{i=1}^ka_i\mathcal{P}_\varepsilon w_i\right)-f'_\varepsilon\left(\sum_{i=1}^ka_i\mathcal{P}_\varepsilon w_i\right)i_\varepsilon(\Phi)\right\|_{L^\frac{2N}{N+2s}(\Omega_\varepsilon)}\nonumber\\
&\leq C\| i_\varepsilon(\Phi)\|_{L^\frac{2N}{N+2s}(\Omega_\varepsilon)}^{min\{2,p\}}\nonumber\\
&\leq C\|\Phi\|_{\mathcal{C}_\varepsilon}^{min\{2,p\}}
\end{align}
and by (\ref{lemma A.7-2}) of Lemma {\ref{lemma A.7}} that
\begin{align}\label{estimation-3}
&\left\|\left[f'_\varepsilon\left(\sum_{i=1}^ka_i\mathcal{P}_\varepsilon w_i\right)-f'_0\left(\sum_{i=1}^ka_i\mathcal{P}_\varepsilon w_i\right)\right]i_\varepsilon(\Phi)\right\|_{L^\frac{2N}{N+2s}(\Omega_\varepsilon)}\nonumber\\
&\leq\left\| f'_\varepsilon\left(\sum_{i=1}^ka_i\mathcal{P}_\varepsilon w_i\right)-f'_0\left(\sum_{i=1}^ka_i\mathcal{P}_\varepsilon w_i\right)\right\|_{L^\frac{N}{2s}(\Omega_\varepsilon)}\| i_\varepsilon(\Phi)\|_{L^\frac{2N}{N-2s}(\Omega_\varepsilon)}\nonumber\\
&\leq C\varepsilon|\ln\varepsilon|\|\Phi\|_{\mathcal{C}_\varepsilon}.
\end{align}
By using Lemma \ref{(M17)-Lemma A}, (\ref{estimation-1}), (\ref{estimation-2}) and (\ref{estimation-3}), we deduce that if $\Phi_{\mbox{\boldmath $\lambda$},\mbox{\boldmath $\sigma$}}^\varepsilon$ satisfies (\ref{proposition 3.4.2}), that is , $\Phi_{\mbox{\boldmath $\lambda$},\mbox{\boldmath $\sigma$}}^\varepsilon\leq C_1(\varepsilon+\gamma(\varepsilon))$, then there exists $C_1>0$ such that $\| T_{\mbox{\boldmath $\lambda$},\mbox{\boldmath $\sigma$}}^\varepsilon(\Phi)\|_{\mathcal{C}_\varepsilon}\leq C_1(\varepsilon+\gamma(\varepsilon))$.
Arguing as in the previous step, we can prove that if $\Phi_1$ and $\Phi_2$ satisfy (\ref{proposition 3.4.2}) then
\begin{align*}
&\|T_{\mbox{\boldmath $\lambda$},\mbox{\boldmath $\sigma$}}^\varepsilon(\Phi_1)-T_{\mbox{\boldmath $\lambda$},\mbox{\boldmath $\sigma$}}^\varepsilon(\Phi_2)\|_{\mathcal{C}_\varepsilon}\nonumber\\
&=\|(L_{\mbox{\boldmath $\lambda$},\mbox{\boldmath $\sigma$}}^\varepsilon)^{-1}(N_\varepsilon(\Phi_1))-(L_{\mbox{\boldmath $\lambda$},\mbox{\boldmath $\sigma$}}^\varepsilon)^{-1}(N_\varepsilon(\Phi_2))\|_{\mathcal{C}_\varepsilon}\nonumber\\
&=\Bigg\|(L_{\mbox{\boldmath $\lambda$},\mbox{\boldmath $\sigma$}}^\varepsilon)^{-1}\Pi_{\mbox{\boldmath $\lambda$},\mbox{\boldmath $\sigma$}}^\varepsilon i_\varepsilon^*\Bigg[f_\varepsilon\left(\sum_{i=1}^ka_i\mathcal{P}_\varepsilon w_i+i_\varepsilon(\Phi_1)\right)
-f_\varepsilon\left(\sum_{i=1}^ka_i\mathcal{P}_\varepsilon w_i+i_\varepsilon(\Phi_2)\right)\nonumber\\
&\ \ \ -f'_0\left(\sum_{i=1}^ka_i\mathcal{P}_\varepsilon w_i\right)(i_\varepsilon(\Phi_1)-i_\varepsilon(\Phi_2))\Bigg]\Bigg\|_{\mathcal{C}_\varepsilon}\nonumber\\
&\leq C_2\Bigg\|f_\varepsilon\left(\sum_{i=1}^ka_i\mathcal{P}_\varepsilon w_i+i_\varepsilon(\Phi_1)\right)
-f_\varepsilon\left(\sum_{i=1}^ka_i\mathcal{P}_\varepsilon w_i+i_\varepsilon(\Phi_2)\right)\nonumber\\
&\ \quad-f'_\varepsilon\left(\sum_{i=1}^ka_i\mathcal{P}_\varepsilon w_i+i_\varepsilon(\Phi_2)\right)(i_\varepsilon(\Phi_1)-i_\varepsilon(\Phi_2))\Bigg\|_{L^\frac{2N}{N+2s}(\Omega_\varepsilon)}\nonumber\\
&\ \quad+C_2\left\| \left[f'_\varepsilon\left(\sum_{i=1}^ka_i\mathcal{P}_\varepsilon w_i+i_\varepsilon(\Phi_2)\right)-f'_\varepsilon\left(\sum_{i=1}^ka_i\mathcal{P}_\varepsilon w_i\right)\right](i_\varepsilon(\Phi_1)-i_\varepsilon(\Phi_2))\right\|_{L^\frac{2N}{N+2s}(\Omega_\varepsilon)}\nonumber\\
&\ \quad+C_2\left\| \left[f'_\varepsilon\left(\sum_{i=1}^ka_i\mathcal{P}_\varepsilon w_i\right)-f'_0\left(\sum_{i=1}^ka_i\mathcal{P}_\varepsilon w_i\right)\right](i_\varepsilon(\Phi_1)-i_\varepsilon(\Phi_2))\right\|_{L^\frac{2N}{N+2s}(\Omega_\varepsilon)}\nonumber\\
&\leq L\|\Phi_1-\Phi_2\|_{\mathcal{C}_\varepsilon}.
\end{align*}
for some $L\in(0,1)$. The remaining parts are obtained by standard arguments, see \cite{Musso31}.
\end{proof}

It is easy to know that for any fixed $\varepsilon>0$, $V\in H_{0,L}^s(\Omega_\varepsilon)$ is a weak solution to (\ref{dilated equation}) if and only if it is a critical point of the energy functional $E_\varepsilon: H_{0,L}^s(\mathcal{C}_\varepsilon)\rightarrow\mathbb R$ defined by
\begin{equation}\label{energy functional}
E_\varepsilon(V)=\frac{1}{2k_s}\int_{\mathcal{C}_\varepsilon} t^{1-2s}|\nabla V|^2dxdt-\int_{\Omega_\varepsilon\times\{0\}}F_\varepsilon(i_\varepsilon(V))dx,
\end{equation}
where $F_\varepsilon(t)=\int_0^tf_\varepsilon(t)dt$. Notice that $E_\varepsilon(V)$ is a $C^1$-functional and
\begin{equation}\label{C1-functional}
E'_\varepsilon(V)\Phi=\frac{1}{k_s}\int_{\mathcal{C}_\varepsilon} t^{1-2s}\nabla V\cdot\nabla \Phi dxdt-\int_{\Omega_\varepsilon\times\{0\}}f_\varepsilon(i_\varepsilon(V))i_\varepsilon(\Phi)dx\ \ \ \mbox{for any}\ \ \Phi\in H_{0,L}^s(\mathcal{C}_\varepsilon).
\end{equation}
Now we introduce the function
\begin{equation}\label{func}
I_\varepsilon({\mbox{\boldmath $\lambda$},\mbox{\boldmath $\sigma$}})=E_\varepsilon\left(\sum_{i=1}^ka_i\mathcal{P}_\varepsilon W_i+\Phi_{\mbox{\boldmath $\lambda$},\mbox{\boldmath $\sigma$}}^\varepsilon\right)
\end{equation}
for $(\mbox{\boldmath $\lambda$},\mbox{\boldmath $\sigma$})=((\lambda_1,...,\lambda_k), (\sigma_1,...,\sigma_k))\in\mathcal{O}_\eta$.

Let $\alpha_0=\frac{1}{N-2s}$ here and in the sequel. Arguing as Proposition 5.4 in \cite{Chio12} and Lemma 2.6 in \cite{Rois15}, we can obtain the following result.
\begin{proposition}\label{proposition-expansion of L-1}
(1) Suppose $\varepsilon>0$ is sufficiently small. If $(\mbox{\boldmath $\lambda$},\mbox{\boldmath $\sigma$})$ is a critical point of the function $I_\varepsilon({\mbox{\boldmath $\lambda$},\mbox{\boldmath $\sigma$}})$, then the function $V=\sum\limits_{i=1}^ka_i\mathcal{P}_\varepsilon W_i+\Phi_{\mbox{\boldmath $\lambda$},\mbox{\boldmath $\sigma$}}^\varepsilon$ is a solution to (\ref{dilated equation}). Hence the changing variables $U(z)=\varepsilon^{-\frac{1}{2-\frac{\varepsilon(N-2s)}{2s}}}V_\varepsilon(\varepsilon^{-\frac{1}{N-2s}}z)$ is the solution of (\ref{equi}) for $z\in\mathcal{C}$.

(2) For $\varepsilon\rightarrow0$, there holds
\begin{equation}\label{equa-1}
I_\varepsilon({\mbox{\boldmath $\lambda$},\mbox{\boldmath $\sigma$}})=\frac{ksc_0}{N}-\frac{\varepsilon kc_0}{(p+1)^2}+\frac{1}{2}\varepsilon\Upsilon_k(\mbox{\boldmath $\lambda$},\mbox{\boldmath $\sigma$})+\frac{\varepsilon k}{p+1}\int_{\mathbb R^N}w^{p+1}\log wdx+o(\varepsilon)
\end{equation}
in $C^1$-uniformly with respect to $({\mbox{\boldmath $\lambda$},\mbox{\boldmath $\sigma$}})\in\mathcal{O}_\eta$. Here
\begin{align}\label{equa-2}
&\Upsilon_k(\mbox{\boldmath $\lambda$},\mbox{\boldmath $\sigma$})=c_1^2\left(\sum_{i=1}^k\lambda_i^{N-2s}H(\sigma_i,\sigma_i)-\sum\limits_{i,h=1,i\neq h}^ka_ia_hG(\sigma_i,\sigma_h)(\lambda_i\lambda_h)^\frac{N-2s}{2}\right)\nonumber\\
&\ \ \ \ \ \ \ \ \ \quad\quad-\frac{c_0(N-2s)}{p+1}\log(\lambda_1\cdot\cdot\cdot\cdot\lambda_k),
\end{align}
\begin{equation}\label{equa-3}
c_0=\int_{\mathbb R^N}w^{p+1}dx
\end{equation}
and
\begin{equation}\label{equa-4}
c_1=\int_{\mathbb R^N}w^pdx,
\end{equation}
where $w:=w_{1,0}$. 
\end{proposition}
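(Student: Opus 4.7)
The plan for part (1) follows the standard closing step of the Lyapunov--Schmidt method. By Proposition \ref{proposition-reducement-1}, the residue
\[
R_{\mbox{\boldmath $\lambda$},\mbox{\boldmath $\sigma$}}:=\sum_{i=1}^k a_i\mathcal{P}_\varepsilon W_i+\Phi_{\mbox{\boldmath $\lambda$},\mbox{\boldmath $\sigma$}}^\varepsilon-i_\varepsilon^*\Bigl[f_\varepsilon\Bigl(\sum_{i=1}^k a_i\mathcal{P}_\varepsilon w_i+i_\varepsilon(\Phi_{\mbox{\boldmath $\lambda$},\mbox{\boldmath $\sigma$}}^\varepsilon)\Bigr)\Bigr]
\]
lies in the orthogonal complement $(\mathcal{K}_{\mbox{\boldmath $\lambda$},\mbox{\boldmath $\sigma$}}^\varepsilon)^\perp$, so it can be written as $R=\sum_{i,j}c_{ij}\,\mathcal{P}_\varepsilon\Psi_i^j$ for some coefficients $c_{ij}$. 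I would then differentiate the identity $I_\varepsilon(\mbox{\boldmath $\lambda$},\mbox{\boldmath $\sigma$})=E_\varepsilon(V)$ with respect to each $\lambda_i$ and $\sigma_i^\ell$, noting that $\partial_{\lambda_i}(\sum_h a_h\mathcal{P}_\varepsilon W_h)=a_i\mathcal{P}_\varepsilon\Psi_i^0$ (up to the usual rescaling) and $\partial_{\sigma_i^\ell}(\sum_h a_h\mathcal{P}_\varepsilon W_h)$ is a scalar multiple of $\mathcal{P}_\varepsilon\Psi_i^\ell$, while the contributions from $\partial\Phi_{\mbox{\boldmath $\lambda$},\mbox{\boldmath $\sigma$}}^\varepsilon$ drop out because $\Phi_{\mbox{\boldmath $\lambda$},\mbox{\boldmath $\sigma$}}^\varepsilon\in\mathcal{K}_{\mbox{\boldmath $\lambda$},\mbox{\boldmath $\sigma$}}^\varepsilon$ is orthogonal to $R$. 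Vanishing of $\nabla I_\varepsilon$ then reduces to a linear system $M(\varepsilon)\,c=0$, whose matrix has entries $(\mathcal{P}_\varepsilon\Psi_i^j,\mathcal{P}_\varepsilon\Psi_h^\ell)_{\mathcal{C}_\varepsilon}$; as $\varepsilon\to 0$ this is close to a block-diagonal, non-degenerate matrix built from single-bubble interactions, hence invertible, forcing every $c_{ij}=0$ and therefore $R=0$, which is exactly (\ref{dilated equation}) for $V$.

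For part (2), I would substitute $V=\sum_{i=1}^k a_i\mathcal{P}_\varepsilon W_i+\Phi_{\mbox{\boldmath $\lambda$},\mbox{\boldmath $\sigma$}}^\varepsilon$ into $E_\varepsilon$ and Taylor-expand around $V_0:=\sum_{i=1}^k a_i\mathcal{P}_\varepsilon W_i$. Because $\Phi_{\mbox{\boldmath $\lambda$},\mbox{\boldmath $\sigma$}}^\varepsilon$ is small by Proposition \ref{proposition-reducement-1} and the first variation $E_\varepsilon'(V_0)[\Phi]$ is controlled by the same error estimates used to prove that proposition, the remainder is $o(\varepsilon)$, giving $I_\varepsilon(\mbox{\boldmath $\lambda$},\mbox{\boldmath $\sigma$})=E_\varepsilon(V_0)+o(\varepsilon)$. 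I would then evaluate $E_\varepsilon(V_0)$ explicitly. For the quadratic term, integration by parts combined with $(-\Delta)^s\mathcal{P}_\varepsilon w_i=w_i^p$ converts it to $\tfrac{1}{2}\sum_{i,h}a_ia_h\int_{\Omega_\varepsilon}w_i^p\mathcal{P}_\varepsilon w_h$; after the rescaling $x=\varepsilon^{\alpha_0}\sigma_i+\lambda_i y$ and the expansion $\mathcal{P}_\varepsilon w_i=w_i-c_1\lambda_i^{(N-2s)/2}\varepsilon^{\alpha_0(N-2s)}H(\sigma_i,\cdot)+$ lower order, the diagonal parts contribute $\tfrac{k}{2}c_0$ together with the Robin-function term $\tfrac{1}{2}c_1^2\varepsilon\sum_i\lambda_i^{N-2s}H(\sigma_i,\sigma_i)$, while the cross terms $i\neq h$ produce $-\tfrac{1}{2}c_1^2\varepsilon\sum_{i\neq h}a_ia_h G(\sigma_i,\sigma_h)(\lambda_i\lambda_h)^{(N-2s)/2}$. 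For the nonlinear part, I would use $|s|^{p+1-\varepsilon}=|s|^{p+1}(1-\varepsilon\log|s|+O(\varepsilon^2))$ so that
\[
\int_{\Omega_\varepsilon}F_\varepsilon(i_\varepsilon(V_0))\,dx=\frac{1}{p+1}\sum_{i=1}^k\int_{\Omega_\varepsilon}(\mathcal{P}_\varepsilon w_i)^{p+1}dx-\frac{\varepsilon}{p+1}\sum_{i=1}^k\int_{\Omega_\varepsilon}(\mathcal{P}_\varepsilon w_i)^{p+1}\log(\mathcal{P}_\varepsilon w_i)\,dx+o(\varepsilon),
\]
and the change of variable $y=(x-\varepsilon^{\alpha_0}\sigma_i)/\lambda_i$ in the log term isolates the bubble contribution $\int_{\mathbb R^N}w^{p+1}\log w\,dx$ plus the $-\tfrac{N-2s}{p+1}\log\lambda_i$ pieces, which together produce the constant $-\tfrac{\varepsilon kc_0}{(p+1)^2}$, the logarithmic part of $\Upsilon_k$ in (\ref{equa-2}), and the final $\int w^{p+1}\log w$ factor appearing in (\ref{equa-1}).

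The main obstacle is upgrading this pointwise expansion to the $C^1$-uniform statement in $(\mbox{\boldmath $\lambda$},\mbox{\boldmath $\sigma$})\in\mathcal{O}_\eta$. To this end I would differentiate the fixed-point equation satisfied by $\Phi_{\mbox{\boldmath $\lambda$},\mbox{\boldmath $\sigma$}}^\varepsilon$ in Proposition \ref{proposition-reducement-1} with respect to each $\lambda_i$ and $\sigma_i^\ell$, and apply the uniform invertibility of $L_{\mbox{\boldmath $\lambda$},\mbox{\boldmath $\sigma$}}^\varepsilon$ given by Proposition \ref{proposition-invertibility}, obtaining bounds on $\partial\Phi_{\mbox{\boldmath $\lambda$},\mbox{\boldmath $\sigma$}}^\varepsilon$ of the same order as the bounds on $\Phi_{\mbox{\boldmath $\lambda$},\mbox{\boldmath $\sigma$}}^\varepsilon$ itself. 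The technical heart of the computation is the precise asymptotic evaluation of the bubble interaction integrals $\int w_i^p\mathcal{P}_\varepsilon w_h$, $\int w_i^p \psi_h^j$ and their parameter derivatives, which, after rescaling back to the original domain, generate the Green's and Robin function values displayed in (\ref{equa-2}); these estimates are of the same nature as those developed in \cite{Chio12, Rey6, Musso31} and would be deferred to the Appendix. Combining them with the normalizations $c_0=\int_{\mathbb R^N}w^{p+1}$ and $c_1=\int_{\mathbb R^N}w^p$ from (\ref{equa-3})--(\ref{equa-4}) then yields the announced expansion.
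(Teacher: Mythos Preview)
Your overall strategy for both parts matches the paper's: for (1) you use the standard Lyapunov--Schmidt closing argument, and for (2) you reduce to $E_\varepsilon(V_0)+o(\varepsilon)$ via the smallness of $\Phi_{\mbox{\boldmath $\lambda$},\mbox{\boldmath $\sigma$}}^\varepsilon$, then expand the energy of the pure bubble sum. The paper proceeds the same way, splitting $E_\varepsilon(V_0)=E_0(V_0)+[E_\varepsilon(V_0)-E_0(V_0)]$ and handling the two pieces separately.

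There is, however, a genuine gap in your treatment of the nonlinear term. You replace $\int_{\Omega_\varepsilon}\bigl|\sum_i a_i\mathcal{P}_\varepsilon w_i\bigr|^{p+1}$ by $\sum_i\int_{\Omega_\varepsilon}(\mathcal{P}_\varepsilon w_i)^{p+1}$, but this drops cross-interaction terms that are of order exactly $\varepsilon$ and carry a factor $G(\sigma_i,\sigma_h)$; in the paper's notation these are precisely the terms in (\ref{estim-3}). They are essential: the Green's-function coefficient $-\tfrac{1}{2}c_1^2$ in $\tfrac{1}{2}\Upsilon_k$ arises only after combining the quadratic cross terms with these nonlinear cross terms. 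Relatedly, the signs you quote for the quadratic contributions are those of the \emph{final} answer rather than of the quadratic piece itself: from $\mathcal{P}_\varepsilon w_i=w_i-c_1\lambda_i^{(N-2s)/2}\varepsilon H(\varepsilon^{\alpha_0}\cdot,\sigma_i)+o(\varepsilon)$ one gets $\tfrac{1}{2}\int w_i^p\mathcal{P}_\varepsilon w_i=\tfrac{c_0}{2}-\tfrac{c_1^2\varepsilon}{2}\lambda_i^{N-2s}H(\sigma_i,\sigma_i)+o(\varepsilon)$ (minus, not plus), and similarly the quadratic cross term is $+\tfrac{c_1^2\varepsilon}{2}\sum_{i\neq h}a_ia_h(\lambda_i\lambda_h)^{(N-2s)/2}G(\sigma_i,\sigma_h)$. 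Only after subtracting the full $\tfrac{1}{p+1}\int|V_0|^{p+1}$ (including its Robin and Green contributions (\ref{estim-2})--(\ref{estim-3})) do the signs flip to those displayed in (\ref{equa-2}). If you keep your ``expand the exponent first'' route, you must retain $\bigl|\sum_i a_i\mathcal{P}_\varepsilon w_i\bigr|^{p+1}$ intact and expand it as in (\ref{estim-2})--(\ref{estim-3}) before localizing to individual bubbles.
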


\begin{proof}
 We first prove (1). Setting $\overline{V}=\sum_{i=1}^ka_i\mathcal{P}_\varepsilon W_i$ for the sake of simplicity. Applying $I'_\varepsilon({\mbox{\boldmath $\lambda$},\mbox{\boldmath $\sigma$}})=0$, (\ref{proposition 3.4.1}) and (\ref{proposition 3.4.2}), we get
\begin{align*}
&\frac{\partial I_\varepsilon }{\partial\varrho}=E'_\varepsilon(\overline{V}+\Phi_{\mbox{\boldmath $\lambda$},\mbox{\boldmath $\sigma$}}^\varepsilon)\cdot\left(\frac{\partial\overline{V}}{\partial\varrho}+\frac{\partial\Phi_{\mbox{\boldmath $\lambda$},\mbox{\boldmath $\sigma$}}^\varepsilon}{\partial\varrho}\right)\nonumber\\
&\ \ \quad=\left(\overline{V}+\Phi_{\mbox{\boldmath $\lambda$},\mbox{\boldmath $\sigma$}}^\varepsilon-i_{\varepsilon}^*f_\varepsilon(\overline{V}+i_{\varepsilon}(\Phi_{\mbox{\boldmath $\lambda$},\mbox{\boldmath $\sigma$}})), \frac{\partial\overline{V}}{\partial\varrho}+\frac{\partial\Phi_{\mbox{\boldmath $\lambda$},\mbox{\boldmath $\sigma$}}^\varepsilon}{\partial\varrho}\right)_\mathcal{C_\varepsilon}\nonumber\\
&\ \ \quad=\sum_{h=1}^k\sum_{l=0}^Nc_{hl}\left( \mathcal{P}_\varepsilon\Psi_h^l, \frac{\partial\overline{V}}{\partial\varrho}+\frac{\partial\Phi_{\mbox{\boldmath $\lambda$},\mbox{\boldmath $\sigma$}}^\varepsilon}{\partial\varrho}\right)_\mathcal{C_\varepsilon}\nonumber\\
&\ \ \quad=\sum_{h=1}^k\sum_{l=0}^Nc_{hl}\left[\left(\mathcal{P}_\varepsilon\Psi_h^l, \sum_{i=1}^ka_i\mathcal{P}_\varepsilon
\frac{\partial W_i}{\partial\varrho}\right)_\mathcal{C_\varepsilon}-\left(\mathcal{P}_\varepsilon
\frac{\partial \Psi_h^l}{\partial\varrho},\Phi_{\mbox{\boldmath $\lambda$},\mbox{\boldmath $\sigma$}}^\varepsilon\right)_\mathcal{C_\varepsilon}\right]\nonumber\\
&\ \ \quad=0,
\end{align*}
where $\varrho$ is one of $\lambda_i$ and $\sigma_i^j$ with $ i=1,2,...,k$ and $j=1,...,N$ , $c_{hl}\in\mathbb R$. We also can conclude that $c_{hl}=0$ for all $h$ and $l$, which implies that the function $V$ is a solution of the equation (\ref{dilated equation}), and hence $U(x)$ is a solution to (\ref{equi}) for $\varepsilon>0$ sufficiently small.

Now we give the proof of (2). Using (\ref{proposition 3.4.2}), we can obtain that
\begin{align*}
&I_\varepsilon({\mbox{\boldmath $\lambda$},\mbox{\boldmath $\sigma$}})=E_\varepsilon\left(\sum_{i=1}^ka_i\mathcal{P}_\varepsilon W_i+\Phi_{\mbox{\boldmath $\lambda$},\mbox{\boldmath $\sigma$}}^\varepsilon\right)=E_\varepsilon\left(\sum_{i=1}^ka_i\mathcal{P}_\varepsilon W_i\right)+o(\varepsilon)\nonumber\\
&\ \ \ \ \ \ \ \ \quad=\frac{1}{2k_s}\int_{\mathcal{C}_\varepsilon} t^{1-2s}\left|\nabla \left(\sum_{i=1}^ka_i\mathcal{P}_\varepsilon W_i\right)\right|^2dxdt-\frac{1}{p+1-\varepsilon}\int_{\Omega_\varepsilon\times\{0\}}\left|\sum_{i=1}^ka_i\mathcal{P}_\varepsilon W_i\right|^{p+1-\varepsilon}dx\nonumber\\
&\ \ \ \ \ \ \ \ \ \ \ \quad+o(\varepsilon).
\end{align*}
We decompose
\begin{equation}\label{decomp}
E_\varepsilon\left(\sum_{i=1}^ka_i\mathcal{P}_\varepsilon W_i\right)=E_0\left(\sum_{i=1}^ka_i\mathcal{P}_\varepsilon W_i\right)+\left[E_\varepsilon\left(\sum_{i=1}^ka_i\mathcal{P}_\varepsilon W_i\right)-E_0\left(\sum_{i=1}^ka_i\mathcal{P}_\varepsilon W_i\right)\right],
\end{equation}
so it suffice to estimate the above two terms. It is easy to see that
\begin{align}\label{terms}
&E_0\left(\sum_{i=1}^ka_i\mathcal{P}_\varepsilon W_i\right)
=\frac{1}{2k_s}\int_{\mathcal{C}_\varepsilon} t^{1-2s}\left|\nabla\left(\sum_{i=1}^ka_i\mathcal{P}_\varepsilon W_i\right)\right|^2dxdt-\frac{1}{p+1}\int_{\Omega_\varepsilon\times\{0\}}\left|\sum_{i=1}^ka_i\mathcal{P}_\varepsilon w_i\right|^{p+1}dx.
\end{align}

Setting $B_i=B_N(\sigma_i,\eta/2)$, where $\eta$ is defined in (\ref{admissible set}), applying Lemma \ref{lemma A.1} and Lemma \ref{lemma A.2}, we can deduce that
\begin{align*}
&\int_{\Omega_\varepsilon}w_i^p\mathcal{P}_\varepsilon w_idx=\int_{\Omega_\varepsilon}w_i^{p+1}dx+\int_{\Omega_\varepsilon}w_i^p(\mathcal{P}_\varepsilon w_i-w_i)dx\nonumber\\
&\ \ \ \ \ \ \ \quad\quad\quad\quad=c_0-\varepsilon c_1\lambda_i^\frac{N-2s}{2}\int_{\Omega_\varepsilon}w_i^pH(\varepsilon^{\alpha_0}x,\sigma_i)dx+o(\varepsilon)\nonumber\\
&\ \ \ \ \ \ \ \quad\quad\quad\quad=c_0-\varepsilon c_1^2\lambda_i^{N-2s}H(\sigma_i,\sigma_i)+o(\varepsilon),\nonumber\\
&\int_{\Omega_\varepsilon}w_h^p\mathcal{P}_\varepsilon w_idx=\int_\frac{B_i}{\varepsilon^{\alpha_0}}w_h^p\mathcal{P}_\varepsilon w_idx+o(\varepsilon)\nonumber\\
&\ \ \ \ \ \ \ \quad\quad\quad\quad=\int_\frac{B_i}{\varepsilon^{\alpha_0}}\varepsilon c_1\lambda_i^\frac{N-2s}{2}w_h^pG(\varepsilon^{\alpha_0}x,\sigma_i)dx+o(\varepsilon)\nonumber\\
&\ \ \ \ \ \ \ \quad\quad\quad\quad=\varepsilon c_1^2(\lambda_i\lambda_h)^\frac{N-2s}{2}G(\sigma_i,\sigma_h)+o(\varepsilon),\nonumber\\
\end{align*}
for $i,h=1,2,...,k$ and $i\neq h$, where $G$ and $H$ are the functions defined in (\ref{Green}) and (\ref{Green-regular}), $c_0$ and $c_1$ are defined in (\ref{equa-3}) and (\ref{equa-4}), respectively. Integrating by parts and then the estimates obtained above yield that 
\begin{align}\label{estim-1}
&\frac{1}{2k_s}\int_{\mathcal{C}_\varepsilon} t^{1-2s}\left|\nabla \left(\sum_{i=1}^ka_i\mathcal{P}_\varepsilon W_i\right)\right|^2dxdt\nonumber\\
&=\frac{1}{2}\sum_{i=1}^k\int_{\Omega_\varepsilon}w_i^p\mathcal{P}_\varepsilon w_idx+\frac{1}{2}
\sum\limits_{i,h=1,i\neq h}^k a_ia_h\int_{\Omega_\varepsilon}w_h^p\mathcal{P}_\varepsilon w_idx\nonumber\\
&=\frac{1}{2}\sum_{i=1}^k[c_0-\varepsilon c_1^2\lambda_i^{N-2s}H(\sigma_i,\sigma_i)+o(\varepsilon)]+\frac{1}{2}\sum\limits_{i,h=1,i\neq h}^k\varepsilon a_ia_h
c_1^2(\lambda_i\lambda_h)^\frac{N-2s}{2}G(\sigma_i,\sigma_h)+o(\varepsilon)\nonumber\\
&=\frac{kc_0}{2}-\frac{c_1^2\varepsilon}{2}\left[\sum_{i=1}^k\lambda_i^{N-2s}H(\sigma_i,\sigma_i)-\sum\limits_{i,h=1,i\neq h}^ka_ia_h
(\lambda_i\lambda_h)^\frac{N-2s}{2}G(\sigma_i,\sigma_h)\right]+o(\varepsilon).
\end{align}
On the other hand, we see that 
\begin{align}\label{estim-2}
&\int_{\Omega_\varepsilon}\left|a_i\mathcal{P}_\varepsilon w_i\right|^{p+1}dx=\int_{\Omega_\varepsilon}\left|\mathcal{P}_\varepsilon w_i\right|^{p+1}dx
=c_0-\varepsilon (p+1)c_1^2\lambda_i^{N-2s}H(\sigma_i,\sigma_i)+o(\varepsilon),
\end{align}
\begin{align}\label{estim-3}
&\int_{\Omega_\varepsilon}\left(\left|\sum_{i=1}^ka_i\mathcal{P}_\varepsilon w_i\right|^{p+1}-\sum_{i=1}^k\left|a_i\mathcal{P}_\varepsilon w_i\right|^{p+1}\right)dx\nonumber\\
&=\varepsilon c_1^2(p+1)\sum\limits_{i,h=1,i\neq h}^ka_ia_h
(\lambda_i\lambda_h)^\frac{N-2s}{2}G(\sigma_i,\sigma_h)+o(\varepsilon).
\end{align}
From the estimates obtained in the previous paragraph, we can conclude that 
\begin{align}\label{estim-4}
&E_0\left(\sum_{i=1}^ka_i\mathcal{P}_\varepsilon W_i\right)=\frac{ksc_0}{N}+\frac{1}{2}\varepsilon c_1^2\sum_{i=1}^k\lambda_i^{N-2s}H(\sigma_i,\sigma_i)\nonumber\\
&\ \ \ \ \ \ \ \ \ \ \ \ \ \ \ \ \ \ \ \ \ \ \ \ \ \ -\frac{1}{2}\varepsilon c_1^2\sum\limits_{i,h=1,i\neq h}^ka_ia_hG(\sigma_i,\sigma_h)(\lambda_i\lambda_h)^\frac{N-2s}{2}+o(\varepsilon).
\end{align}

As we have seen, it has 
\begin{align}\label{estim-5}
&\int_{\Omega_\varepsilon}\left|\sum_{i=1}^ka_i\mathcal{P}_\varepsilon w_i\right|^{p+1}dx=kc_0+o(1),
\end{align}
\begin{align}\label{estim-6}
&\int_{\Omega_\varepsilon}\left|\sum_{i=1}^ka_i\mathcal{P}_\varepsilon W_i\right|^{p+1}\log\left|\sum_{i=1}^ka_i\mathcal{P}_\varepsilon W_i\right|dx\nonumber\\
&=-\frac{c_0(N-2s)}{2}\log(\lambda_1\cdot\cdot\cdot\cdot\lambda_k)+k\int_{\mathbb R^N}w^{p+1}\log wdx+o(1).
\end{align}
The second equality (\ref{estim-6}) can be computed as Lemma 2.6 in \cite{Rois15}, Lemma 6.2 \cite{Musso34} and \cite{Del33}.
Moreover, by using Taylor's expansion, (\ref{estim-5}) and (\ref{estim-6}), we can conclude that 
\begin{align}\label{estim-7}
&E_\varepsilon\left(\sum_{i=1}^ka_i\mathcal{P}_\varepsilon W_i\right)-E_0\left(\sum_{i=1}^ka_i\mathcal{P}_\varepsilon W_i\right)\nonumber\\
&=\frac{1}{p+1}\int_{\Omega_\varepsilon}\left|\sum_{i=1}^ka_i\mathcal{P}_\varepsilon W_i\right|^{p+1}dx-\frac{1}{p+1-\varepsilon}\int_{\Omega_\varepsilon}\left|\sum_{i=1}^ka_i\mathcal{P}_\varepsilon W_i\right|^{p+1-\varepsilon}dx\nonumber\\
&=-{\frac{\varepsilon}{(p+1)^2}}\int_{\Omega_\varepsilon}\left|\sum_{i=1}^ka_i\mathcal{P}_\varepsilon W_i\right|^{p+1}dx+\frac{\varepsilon}{p+1}\int_{\Omega_\varepsilon}\left|\sum_{i=1}^ka_i\mathcal{P}_\varepsilon W_i\right|^{p+1}\log\left|\sum_{i=1}^ka_i\mathcal{P}_\varepsilon W_i\right|dx+o(\varepsilon)\nonumber\\
&=-\frac{\varepsilon kc_0}{(p+1)^2}+\frac{\varepsilon k}{p+1}\int_{\mathbb R^N}w^{p+1}\log wdx
-\frac{c_0\varepsilon(N-2s)}{2(p+1)}\log(\lambda_1\cdot\cdot\cdot\cdot\lambda_k)+o(\varepsilon).
\end{align}
Then by (\ref{terms}) and (\ref{estim-7}), the proof is complete.
\end{proof}

Now we consider the case $k=2$ and suppose $a_1=1$ and $a_2=-1$. We introduce the set
\begin{equation}\label{set}
\Lambda:=\{(\mbox{\boldmath $\lambda$},\mbox{\boldmath $\sigma$})=(\lambda_1,\ \lambda_2,\ \sigma_1,\ \sigma_2): \lambda_1>0,\ \lambda_2>0,\ \sigma_1\in\Omega,\ \sigma_2\in\Omega\ \mbox{and}\ \sigma_1\neq\sigma_2\}
\end{equation}
and the function $\Upsilon_2:\Lambda\rightarrow \mathbb R$ defined by
\begin{align}\label{define}
&\Upsilon_2(\mbox{\boldmath $\lambda$},\mbox{\boldmath $\sigma$})=c_1^2[H(\sigma_1,\sigma_1)\lambda_1^{N-2s}+H(\sigma_2,\sigma_2)\lambda_2^{N-2s}+2G(\sigma_1,\sigma_2)\lambda_1^\frac{N-2s}{2}\lambda_2^\frac{N-2s}{2}]\nonumber\\
&\ \ \ \ \ \ \ \ \ \quad\quad-\frac{c_0(N-2s)}{p+1}\log(\lambda_1\lambda_2).
\end{align}

\begin{lemma}\label{lemma-expansion of L-1}
If $(\lambda^*,\sigma^*)$ is a critical point of $\Upsilon_2$, then $\sigma^*$ is a critical point of $\varphi$. If $(\lambda^*,\sigma^*)$ is a minimal point of $\Upsilon_2$, then $\sigma^*$ is a minimal point of $\varphi$.
\end{lemma}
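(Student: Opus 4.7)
The plan is to perform a partial reduction of $\Upsilon_2$ in the $\mbox{\boldmath $\lambda$}$-variables: for each $\mbox{\boldmath $\sigma$}$ I would solve $\nabla_{\mbox{\boldmath $\lambda$}}\Upsilon_2(\mbox{\boldmath $\lambda$},\mbox{\boldmath $\sigma$})=0$ explicitly, substitute back, and verify that the resulting reduced function of $\mbox{\boldmath $\sigma$}$ is a strictly increasing function of $\varphi(\sigma_1,\sigma_2)$ plus an additive constant. Because $\log$ is monotone and its coefficient will turn out to be positive, criticality and minimality will transfer directly from $\Upsilon_2$ to $\varphi$.

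First I would compute $\partial_{\lambda_i}\Upsilon_2=0$ for $i=1,2$. Writing $H_i:=H(\sigma_i,\sigma_i)$ and $G:=G(\sigma_1,\sigma_2)$ and multiplying through by $\lambda_i/(N-2s)$, these equations become
\begin{equation*}
c_1^2\bigl[H_1\lambda_1^{N-2s}+G(\lambda_1\lambda_2)^{(N-2s)/2}\bigr]=c_1^2\bigl[H_2\lambda_2^{N-2s}+G(\lambda_1\lambda_2)^{(N-2s)/2}\bigr]=\frac{c_0}{p+1}.
\end{equation*}
The first equality forces $H_1\lambda_1^{N-2s}=H_2\lambda_2^{N-2s}=:\alpha$, so $(\lambda_1\lambda_2)^{(N-2s)/2}=\alpha/\sqrt{H_1H_2}$, and reinserting gives $\alpha(1+G/\sqrt{H_1H_2})=c_0/(c_1^2(p+1))$, that is,
\begin{equation*}
\alpha=\frac{c_0\sqrt{H_1H_2}}{c_1^2(p+1)\,\varphi(\sigma_1,\sigma_2)}.
\end{equation*}
Since $\alpha>0$ at any critical point, the right-hand side forces $\varphi(\sigma_1,\sigma_2)>0$, and then $\lambda_i^{N-2s}=\alpha/H_i$ defines a unique critical map $\mbox{\boldmath $\lambda$}^*(\mbox{\boldmath $\sigma$})$.

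Next I would plug $\mbox{\boldmath $\lambda$}^*(\mbox{\boldmath $\sigma$})$ back into $\Upsilon_2$ to obtain the reduced function $\widetilde\Upsilon(\mbox{\boldmath $\sigma$}):=\Upsilon_2(\mbox{\boldmath $\lambda$}^*(\mbox{\boldmath $\sigma$}),\mbox{\boldmath $\sigma$})$. The bracketed quadratic piece in \textup{(\ref{define})} collapses at the critical $\mbox{\boldmath $\lambda$}$ to $2\alpha(1+G/\sqrt{H_1H_2})=2c_0/(c_1^2(p+1))$, which is $\mbox{\boldmath $\sigma$}$-independent. For the logarithmic piece, $(\lambda_1\lambda_2)^{N-2s}=\alpha^2/(H_1H_2)$ yields $\log(\lambda_1\lambda_2)=\tfrac{2}{N-2s}\log\alpha-\tfrac{1}{N-2s}\log(H_1H_2)$, and inserting the explicit formula for $\alpha$ cancels the $\log(H_1H_2)$ contribution exactly, leaving
\begin{equation*}
\widetilde\Upsilon(\mbox{\boldmath $\sigma$})=C_0+\frac{2c_0}{p+1}\log\varphi(\sigma_1,\sigma_2)
\end{equation*}
for an $\mbox{\boldmath $\sigma$}$-independent constant $C_0$.

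Finally I would close the argument by the usual reduction principle. If $(\mbox{\boldmath $\lambda$}^*,\mbox{\boldmath $\sigma$}^*)$ is a critical point of $\Upsilon_2$, the uniqueness of the $\mbox{\boldmath $\lambda$}$-critical point gives $\mbox{\boldmath $\lambda$}^*=\mbox{\boldmath $\lambda$}^*(\mbox{\boldmath $\sigma$}^*)$, and the chain rule combined with $\nabla_\lambda\Upsilon_2(\mbox{\boldmath $\lambda$}^*,\mbox{\boldmath $\sigma$}^*)=0$ gives $\nabla_\sigma\widetilde\Upsilon(\mbox{\boldmath $\sigma$}^*)=0$; since $2c_0/(p+1)>0$ and $\log$ is strictly increasing on $(0,\infty)$, $\mbox{\boldmath $\sigma$}^*$ is a critical point of $\varphi$. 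If $(\mbox{\boldmath $\lambda$}^*,\mbox{\boldmath $\sigma$}^*)$ is a minimum, then $\widetilde\Upsilon(\mbox{\boldmath $\sigma$}^*)=\Upsilon_2(\mbox{\boldmath $\lambda$}^*,\mbox{\boldmath $\sigma$}^*)\leq\Upsilon_2(\mbox{\boldmath $\lambda$}^*(\mbox{\boldmath $\sigma$}),\mbox{\boldmath $\sigma$})=\widetilde\Upsilon(\mbox{\boldmath $\sigma$})$ for every admissible $\mbox{\boldmath $\sigma$}$, and the same monotonicity transfers this inequality to $\varphi$. The main obstacle is the algebraic step in the second paragraph: the cancellation of the $H_1H_2$ factors after substitution relies on the precise matching between the coefficient $c_1^2$ in front of the quadratic form and the coefficient $c_0(N-2s)/(p+1)$ in front of the logarithm, so care must be taken to keep track of these constants in order to see the clean $\log\varphi$ structure emerge.
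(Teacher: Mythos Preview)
Your proposal is correct and follows precisely the standard reduction that the paper is deferring to (the paper omits the proof and cites Lemma~3.2 of \cite{Bartsch1}, where exactly this partial minimization in $\boldsymbol{\lambda}$ followed by the identification $\widetilde\Upsilon=C_0+\tfrac{2c_0}{p+1}\log\varphi$ is carried out in the case $s=1$). Your algebra is accurate, including the cancellation of the $\log(H_1H_2)$ terms, and the transfer of criticality/minimality via monotonicity of $\log$ is exactly what is needed; the only cosmetic point worth adding explicitly is that $\varphi>0$ everywhere on $\{\sigma_1\neq\sigma_2\}$ (since $G>0$ and $H>0$), so that $\boldsymbol{\lambda}^*(\boldsymbol{\sigma})$ is globally well defined and the $\log$ makes sense.
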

\begin{proof}
The proof is similarly to Lemma 3.2 in \cite{Bartsch1} and thus is omitted here.
\end{proof}

\noindent\textbf{Proof of Theorem 1.1.}
Similarly as Theorem 1.1 of \cite{Bartsch1}, the above lemmas and propositions give the result.
  \hfill$\Box$

\appendix
\section{\textbf{Appendix}\label{Appendix A}}

In this section, we collect some technical lemmas from \cite{Chio12} and give some basic estimations needed.

By using the definition of $w_{\lambda,\xi}$, $\psi_{\lambda,\xi}^j$, $\mathcal{P}_\varepsilon w_{\lambda,\xi}$ and $\mathcal{P}_\varepsilon \psi_{\lambda,\delta}^j$ $(\mbox{for}\ \ i=1,...,k\ \mbox{and}\ \ j=1,...,N)$, we get
\begin{align}\label{Appendix A.1}
&\psi_{\lambda,\sigma}^0(x)=\frac{\partial w_{\lambda,\sigma}}{\partial \lambda}(x)\nonumber\\
&\ \ \ \ \ \ \ \ \ \ =a_{N,s}\frac{(N-2s)}{2}\lambda^\frac{N-2s-2}{2}\frac{|x-\sigma|^2-\lambda^2}{(\lambda^2+|x-\sigma|^2)^\frac{N-2s+2}{2}}\ \ \ \ x\in\mathbb R^N,
\end{align}
\begin{align}\label{Appendix A.2}
&\psi_{\lambda,\sigma}^j(x)=\frac{\partial w_{\lambda,\sigma}}{\partial \sigma_j}(x)\nonumber\\
&\ \ \ \ \ \ \ \ \ \ =-a_{N,s}(N-2s)\lambda^\frac{N-2s}{2}\frac{x_j-\sigma_j}{(\lambda^2+|x-\sigma|^2)^\frac{N-2s+2}{2}}\ \ \ \ \ x\in\mathbb R^N.
\end{align}
In particular it holds
\begin{align}\label{Appendix A.3}
&\mathcal{P}_\varepsilon w_{\varepsilon^{\alpha_0}\lambda,\sigma}(x)=\varepsilon^{-\frac{(N-2s)\alpha_0}{2}}\mathcal{P}_\varepsilon w_{\lambda,\sigma\varepsilon^{-\alpha_0}}\left(\frac{x}{\varepsilon^{\alpha_0}}\right)\ \ \ \ \ x\in \Omega,
\end{align}
\begin{align}\label{Appendix A.4}
&\mathcal{P}_\varepsilon \psi_{\varepsilon^{\alpha_0}\lambda,\sigma}^j(x)=\varepsilon^{-\frac{(N-2s+2)\alpha_0}{2}}\mathcal{P}_\varepsilon \psi_{\lambda,\sigma\varepsilon^{-\alpha_0}}^j\left(\frac{x}{\varepsilon^{\alpha_0}}\right)\ \ \ \ \ x\in \Omega.
\end{align}

The first four lemmas are from Lemma C.1-C.4 in \cite{Chio12}.
\begin{lemma}\label{lemma A.1}
 Let $\lambda>0$ and $\sigma=(\sigma^1,...,\sigma^N)\in\Omega$. For any $x\in\Omega_\varepsilon$, there hold
\begin{eqnarray*}
&\mathcal{P}_\varepsilon w_{\lambda,\sigma\varepsilon^{-\alpha_0} }(x)=w_{\lambda,\sigma\varepsilon^{-\alpha_0}}(x)-c_1\lambda^\frac{N-2s}{2}H(\varepsilon^{\alpha_0}x,\sigma)\varepsilon^{(N-2s)\alpha_0}+o(\varepsilon^{(N-2s)\alpha_0}),
\end{eqnarray*}
\begin{eqnarray*}
&\ \ \ \ \ \ \ \mathcal{P}_\varepsilon \psi_{\lambda,\sigma\varepsilon^{-\alpha_0}}^j(x)=\psi_{\lambda,\sigma\varepsilon^{-\alpha_0}}^j(x)-c_1\lambda^\frac{N-2s}{2}\frac{\partial H}{\partial\sigma^j}(\varepsilon^{\alpha_0}x,\sigma)\varepsilon^{(N-2s+1)\alpha_0}+o(\varepsilon^{(N-2s+1)\alpha_0}),
\end{eqnarray*}
\begin{eqnarray*}
&\ \ \ \ \ \ \ \ \ \ \mathcal{P}_\varepsilon \psi_{\lambda,\sigma\varepsilon^{-\alpha_0}}^0(x)=\psi_{\lambda,\sigma\varepsilon^{-\alpha_0}}^0(x)-\frac{c_1(N-2s)}{2}\lambda^\frac{N-2s-2}{2} H(\varepsilon^{\alpha_0}x,\sigma)\varepsilon^{(N-2s)\alpha_0}+o(\varepsilon^{(N-2s)\alpha_0}),
\end{eqnarray*}
where $c_1$ is defined in (\ref{equa-4}). As $\varepsilon\rightarrow0$, $o\to 0$ uniformly in $x\in\Omega_\varepsilon$ and $\sigma\in\Omega$ provided $\mbox{dist}(\sigma,\partial\Omega)>\bar{C}$ for some constant $\bar{C}>0$.
\end{lemma}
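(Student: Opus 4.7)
My plan is to recast each of the three identities as a comparison between two $s$-harmonic extensions on $\mathcal{C}_\varepsilon$ and to control the difference by the maximum principle. I will describe the argument for $\mathcal{P}_\varepsilon w_{\lambda,\sigma\varepsilon^{-\alpha_0}}$; the $\psi^j$ and $\psi^0$ cases differ only in the source density driving the projection.

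First I would introduce the error extension $\tilde R^w_\varepsilon := W_{\lambda,\sigma\varepsilon^{-\alpha_0}} - \mathcal{P}_\varepsilon W_{\lambda,\sigma\varepsilon^{-\alpha_0}}$. Subtracting the problem characterising the $s$-harmonic extension of the bubble from Definition \ref{definintion-operator L-1} applied to $\mathcal{P}_\varepsilon W_{\lambda,\sigma\varepsilon^{-\alpha_0}}=i_\varepsilon^*(w_{\lambda,\sigma\varepsilon^{-\alpha_0}}^p)$, the function $\tilde R^w_\varepsilon$ is $s$-harmonic in $\mathcal{C}_\varepsilon$, carries zero weighted Neumann datum on $\Omega_\varepsilon\times\{0\}$, and coincides with $W_{\lambda,\sigma\varepsilon^{-\alpha_0}}$ on $\partial_L\mathcal{C}_\varepsilon$. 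In parallel, I would define the rescaled extension of $H$,
\[
\bar H_\varepsilon(z) := c_1\lambda^{\frac{N-2s}{2}}\varepsilon^{(N-2s)\alpha_0}\,H_\mathcal{C}(\varepsilon^{\alpha_0}z,\sigma),\qquad z\in\mathcal{C}_\varepsilon.
\]
By the scaling invariance of $\mbox{div}(t^{1-2s}\nabla\cdot)$ and the problem (\ref{regular}) defining $H_\mathcal{C}$, $\bar H_\varepsilon$ is also $s$-harmonic in $\mathcal{C}_\varepsilon$ with zero Neumann datum at $t=0$, and its lateral boundary value reduces to $c_{N,s}c_1\lambda^{\frac{N-2s}{2}}|z-(\sigma\varepsilon^{-\alpha_0},0)|^{-(N-2s)}$.

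The heart of the argument is a matching of lateral boundary values. Using the Riesz-type representation
\[
W_{\lambda,\sigma\varepsilon^{-\alpha_0}}(z)=\int_{\mathbb R^N}\frac{c_{N,s}}{|z-(y,0)|^{N-2s}}\,w_{\lambda,\sigma\varepsilon^{-\alpha_0}}^p(y)\,dy,
\]
valid because $W_{\lambda,\sigma\varepsilon^{-\alpha_0}}$ solves the weighted Neumann problem on the whole half-space with datum $w^p_{\lambda,\sigma\varepsilon^{-\alpha_0}}$, and exploiting that $\mbox{dist}(\sigma,\partial\Omega)>\bar C$ forces $|z-(\sigma\varepsilon^{-\alpha_0},0)|\geq C\varepsilon^{-\alpha_0}$ for $z\in\partial_L\mathcal{C}_\varepsilon$, I would Taylor-expand the Riesz kernel in $y$ about $\sigma\varepsilon^{-\alpha_0}$. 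The radial symmetry of $w^p_{\lambda,\sigma\varepsilon^{-\alpha_0}}$ around $\sigma\varepsilon^{-\alpha_0}$ kills the first moment, while $\int w^p_{\lambda,\sigma\varepsilon^{-\alpha_0}}=c_1\lambda^{\frac{N-2s}{2}}$, giving
\[
W_{\lambda,\sigma\varepsilon^{-\alpha_0}}(z)=\frac{c_{N,s}c_1\lambda^{\frac{N-2s}{2}}}{|z-(\sigma\varepsilon^{-\alpha_0},0)|^{N-2s}}+o(\varepsilon^{(N-2s)\alpha_0})
\]
uniformly on $\partial_L\mathcal{C}_\varepsilon$, hence $|\tilde R^w_\varepsilon-\bar H_\varepsilon|=o(\varepsilon^{(N-2s)\alpha_0})$ there.

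The maximum principle for $\mbox{div}(t^{1-2s}\nabla\cdot)$ with these mixed boundary data then propagates the estimate to all of $\mathcal{C}_\varepsilon$, and taking the trace at $t=0$ yields the first identity. For $\psi^j$, $j=1,\ldots,N$, I would repeat the scheme with the density $w^p$ replaced by $pw^{p-1}\psi^j$: since $\psi^j$ is odd in $x-\sigma\varepsilon^{-\alpha_0}$ the zeroth moment of this density vanishes, while the first moment equals $c_1\lambda^{\frac{N-2s}{2}}$, so differentiating the kernel in $y^j$ produces the factor $\partial_{\sigma^j}H$ at the prescribed order $\varepsilon^{(N-2s+1)\alpha_0}$. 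The $\psi^0$ case is parallel, with the extra factor $\tfrac{c_1(N-2s)}{2}\lambda^{\frac{N-2s-2}{2}}$ coming from $\partial_\lambda$ of $c_1\lambda^{\frac{N-2s}{2}}$. The principal obstacle is the uniform control of the Taylor remainder as $\sigma$ varies with $\mbox{dist}(\sigma,\partial\Omega)>\bar C$; this reduces to a standard $C^1$-bound for $H_\mathcal{C}(\cdot,\sigma)$ away from the source, which is available by elliptic regularity of the extension problem.
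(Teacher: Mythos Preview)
The paper does not supply its own proof of this lemma; immediately before the statement it writes ``The first four lemmas are from Lemma~C.1--C.4 in \cite{Chio12}.'' Your outline---matching the lateral boundary values of the error extension $W_{\lambda,\sigma\varepsilon^{-\alpha_0}}-\mathcal{P}_\varepsilon W_{\lambda,\sigma\varepsilon^{-\alpha_0}}$ against the rescaled regular part $\bar H_\varepsilon$ via the Riesz representation and a moment expansion of $w^p$, then propagating the difference into $\mathcal{C}_\varepsilon$ by the maximum principle for $\operatorname{div}(t^{1-2s}\nabla\cdot)$---is precisely the argument used in that reference, so your proposal is correct and agrees with the cited proof.

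One small caution for when you fill in the details: the naive second-moment bound $\int |u|^2 w^p(u)\,du$ diverges for $0<s<1$, so the Taylor remainder on $\partial_L\mathcal{C}_\varepsilon$ must be handled by splitting the integration in $y$ at a scale comparable to $|z-(\sigma\varepsilon^{-\alpha_0},0)|$ and using the decay of $w^p$ on the far region; this still yields a remainder of order $\varepsilon^{N\alpha_0}=o(\varepsilon^{(N-2s)\alpha_0})$, but the intermediate step is not the textbook second-order Taylor estimate you allude to.
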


\begin{lemma}\label{lemma A.2} Let $\lambda>0$ and $\sigma=(\sigma^1,...,\sigma^N)\in\Omega$, there hold
\begin{eqnarray*}\label{Appendix A.8}
&\mathcal{P}_\varepsilon w_{\lambda,\sigma\varepsilon^{-\alpha_0}}(x)=c_1\lambda^\frac{N-2s}{2}G(\varepsilon^{\alpha_0}x,\sigma)\varepsilon^{(N-2s)\alpha_0}+o(\varepsilon^{(N-2s)\alpha_0}),
\end{eqnarray*}
\begin{eqnarray*}\label{Appendix A.9}
&\ \ \ \ \ \ \ \ \mathcal{P}_\varepsilon \psi_{\lambda,\sigma\varepsilon^{-\alpha_0}}^j(x)=c_1\lambda^\frac{N-2s}{2}\frac{\partial G}{\partial\sigma^j}(\varepsilon^{\alpha_0}x,\sigma)\varepsilon^{(N-2s+1)\alpha_0}+o(\varepsilon^{(N-2s+1)\alpha_0}),
\end{eqnarray*}
\begin{eqnarray*}\label{Appendix A.10}
&\ \ \ \ \ \ \ \ \ \ \ \mathcal{P}_\varepsilon \psi_{\lambda,\sigma\varepsilon^{-\alpha_0}}^0(x)=\frac{c_1(N-2s)}{2}\lambda^\frac{N-2s-2}{2} G(\varepsilon^{\alpha_0}x,\sigma)\varepsilon^{(N-2s)\alpha_0}+o(\varepsilon^{(N-2s)\alpha_0}),
\end{eqnarray*}
where $c_1>0$ is the constant defined in (\ref{equa-4}). As $\varepsilon\rightarrow0$, $o\to 0$ uniformly in $x\in\Omega_\varepsilon$ and $\sigma\in\Omega$ provided $|\sigma-\varepsilon^{\alpha_0}x|>C$ and $\mbox{dist}(\partial\Omega,\varepsilon^{\alpha_0}x)>C$ for fixed $C>0$.
\end{lemma}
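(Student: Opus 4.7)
\textbf{Proof proposal for Lemma \ref{lemma A.2}.} The plan is to reduce this lemma to Lemma \ref{lemma A.1} by replacing the bubble $w_{\lambda,\sigma\varepsilon^{-\alpha_0}}(x)$ (and its $\lambda$- and $\sigma$-derivatives) by their far-field asymptotic expressions under the standing hypothesis $|\sigma-\varepsilon^{\alpha_0}x|>C$, and then recognizing the Green's function through the identity $G(x,y)=\frac{c_{N,s}}{|x-y|^{N-2s}}-H(x,y)$ from (\ref{Green-regular}).

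First I would record the relation $a_{N,s}=c_1\,c_{N,s}$. This follows from the fact that $w=w_{1,0}$ satisfies $(-\Delta)^s w=w^p$ on $\mathbb R^N$, so $w(x)=\int_{\mathbb R^N}\frac{c_{N,s}}{|x-y|^{N-2s}}w^p(y)\,dy$; comparing the large-$|x|$ behavior of this representation (which is $c_{N,s}c_1|x|^{-(N-2s)}+o(|x|^{-(N-2s)})$) with the explicit expansion of $w_{1,0}(x)=a_{N,s}(1+|x|^2)^{-(N-2s)/2}$ at infinity yields the claimed identity. This identity is the bridge between the bubble asymptotics and the singular part $c_{N,s}/|x-y|^{N-2s}$ of $G$.

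Next, under the hypothesis $|\sigma-\varepsilon^{\alpha_0}x|>C$, write $|x-\sigma\varepsilon^{-\alpha_0}|=\varepsilon^{-\alpha_0}|\varepsilon^{\alpha_0}x-\sigma|$ and expand using $(\lambda^2+|x-\sigma\varepsilon^{-\alpha_0}|^2)^{-\gamma}=|x-\sigma\varepsilon^{-\alpha_0}|^{-2\gamma}(1+O(\varepsilon^{2\alpha_0}))$ for each relevant exponent $\gamma$. Applied to the explicit formulas (\ref{solution}), (\ref{Appendix A.1}), (\ref{Appendix A.2}), this gives uniformly for $(\lambda,\sigma)$ in the admissible range
\begin{align*}
w_{\lambda,\sigma\varepsilon^{-\alpha_0}}(x)&=a_{N,s}\lambda^{\frac{N-2s}{2}}\frac{\varepsilon^{(N-2s)\alpha_0}}{|\varepsilon^{\alpha_0}x-\sigma|^{N-2s}}+o(\varepsilon^{(N-2s)\alpha_0}),\\
\psi^j_{\lambda,\sigma\varepsilon^{-\alpha_0}}(x)&=a_{N,s}\lambda^{\frac{N-2s}{2}}\varepsilon^{(N-2s+1)\alpha_0}\,\partial_{\sigma^j}\!\left(\frac{1}{|\varepsilon^{\alpha_0}x-\sigma|^{N-2s}}\right)+o(\varepsilon^{(N-2s+1)\alpha_0}),\\
\psi^0_{\lambda,\sigma\varepsilon^{-\alpha_0}}(x)&=\frac{a_{N,s}(N-2s)}{2}\lambda^{\frac{N-2s-2}{2}}\frac{\varepsilon^{(N-2s)\alpha_0}}{|\varepsilon^{\alpha_0}x-\sigma|^{N-2s}}+o(\varepsilon^{(N-2s)\alpha_0}),
\end{align*}
using $a_{N,s}=c_1c_{N,s}$ to turn the prefactors into $c_1c_{N,s}$.

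Finally, I would substitute these three far-field expressions into the three identities of Lemma \ref{lemma A.1}, and use the splitting $\frac{c_{N,s}}{|\varepsilon^{\alpha_0}x-\sigma|^{N-2s}}=G(\varepsilon^{\alpha_0}x,\sigma)+H(\varepsilon^{\alpha_0}x,\sigma)$ (and its $\sigma^j$-derivative for the $\psi^j$ case). The $H$-term cancels exactly the $H$-term produced by Lemma \ref{lemma A.1}, while the $G$-term survives, delivering the three claimed expansions. The auxiliary hypothesis $\mathrm{dist}(\partial\Omega,\varepsilon^{\alpha_0}x)>C$ guarantees that $G(\varepsilon^{\alpha_0}x,\sigma)$ and its $\sigma$-derivative are smooth and bounded at the evaluation point, hence $o(1)$ remainders stay $o(1)$ uniformly. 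The only subtlety worth flagging is checking that the $O(\varepsilon^{2\alpha_0})$ relative Taylor remainder, once multiplied by the leading $\varepsilon^{(N-2s)\alpha_0}|\varepsilon^{\alpha_0}x-\sigma|^{-(N-2s)}$ factor, is indeed absorbed into the $o(\varepsilon^{(N-2s)\alpha_0})$ (respectively $o(\varepsilon^{(N-2s+1)\alpha_0})$) error; this holds uniformly because $|\varepsilon^{\alpha_0}x-\sigma|>C$ keeps the denominator bounded away from zero. There is no genuine obstacle here: once Lemma \ref{lemma A.1} and the constant identification $a_{N,s}=c_1c_{N,s}$ are in hand, the lemma follows by bookkeeping.
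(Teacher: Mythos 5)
Your reduction is sound, but note that the paper itself gives no argument for Lemma~\ref{lemma A.2}: it is quoted from Lemma C.1--C.4 of \cite{Chio12}, where the standard proof is direct, writing $\mathcal{P}_\varepsilon w_{\lambda,\sigma\varepsilon^{-\alpha_0}}(x)=\int_{\Omega_\varepsilon}G_{\Omega_\varepsilon}(x,y)\,w^p_{\lambda,\sigma\varepsilon^{-\alpha_0}}(y)\,dy$, using the scaling $G_{\Omega_\varepsilon}(x,y)=\varepsilon^{(N-2s)\alpha_0}G(\varepsilon^{\alpha_0}x,\varepsilon^{\alpha_0}y)$ and the concentration of $w^p$ near $\sigma\varepsilon^{-\alpha_0}$ (with $\int_{\mathbb R^N}w^p_{\lambda,\delta}=c_1\lambda^{\frac{N-2s}{2}}$); this yields all three expansions in one stroke under exactly the stated hypotheses. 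Your route instead combines Lemma~\ref{lemma A.1} with the far-field expansion of the bubble and the identity $a_{N,s}=c_1c_{N,s}$ (correctly justified via the Riesz representation of $(-\Delta)^s w=w^p$), and the algebra $\frac{c_{N,s}}{|\varepsilon^{\alpha_0}x-\sigma|^{N-2s}}=G+H$ from (\ref{Green-regular}) does produce the claimed formulas with the right powers of $\varepsilon$. What you lose is generality in $\sigma$: Lemma~\ref{lemma A.1} is only asserted uniformly for $\mbox{dist}(\sigma,\partial\Omega)>\bar C$, a hypothesis not present in Lemma~\ref{lemma A.2}, and your remark that $G(\varepsilon^{\alpha_0}x,\cdot)$ and $\partial_{\sigma^j}G(\varepsilon^{\alpha_0}x,\cdot)$ stay bounded likewise fails as $\sigma\to\partial\Omega$ (the $\sigma$-gradient blows up like $\mbox{dist}(\sigma,\partial\Omega)^{s-1}$). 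So your argument proves the lemma only for $\sigma$ bounded away from the boundary; this is harmless for the paper, since it is applied with $(\mbox{\boldmath $\lambda$},\mbox{\boldmath $\sigma$})\in\mathcal{O}_\eta$, but the direct Green-representation proof is what covers the statement as written.
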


\begin{lemma}\label{lemma A.3}
For any $\varepsilon>0$, $i=1,...,k$ and $j=1,...,N$, there exists $C>0$ such that
\begin{eqnarray}\label{Appendix A.11}
&\|\mathcal{P}_\varepsilon w_i\|_{L^\frac{2N}{N-2s}(\Omega_\varepsilon)}\leq\|w_i\|_{L^\frac{2N}{N-2s}(\Omega_\varepsilon)}\leq C,
\end{eqnarray}
\begin{eqnarray}\label{Appendix A.12}
&\|\mathcal{P}_\varepsilon \psi_i^j\|_{L^\frac{2N}{N-2s}(\Omega_\varepsilon)}\leq C.
\end{eqnarray}
Moreover, we have
\begin{eqnarray}\label{Appendix A.13}
&\|\mathcal{P}_\varepsilon \psi_i^j\|_{L^\frac{2N}{N+2s}(\Omega_\varepsilon)}\leq C,
\end{eqnarray}
\begin{eqnarray}\label{Appendix A.14}
&\|\mathcal{P}_\varepsilon w_i\|_{L^\frac{2N}{N+2s}(\Omega_\varepsilon)}\leq
\left\{\aligned
&C\ \ \ \ \ \ \ \ \ \ \ \ \ \ \ \ \ \ \ \ \ \ \ \ \ \mbox{if}\ \ N>6s,\\
&C\varepsilon^{-\frac{(6s-N)\alpha_0}{2}}|\log\varepsilon|\ \ \ \ \ \mbox{if}\ \ N\leq 6s
\endaligned \right.
\end{eqnarray}
and
\begin{eqnarray}\label{Appendix A.15}
&\|\mathcal{P}_\varepsilon \psi_i^0\|_{L^\frac{2N}{N+2s}(\Omega_\varepsilon)}\leq
\left\{\aligned
&C\ \ \ \ \ \ \ \ \ \ \ \ \ \ \ \ \ \ \ \ \ \ \ \ \ \mbox{if}\ \ N>6s,\\
&C\varepsilon^{-\frac{(6s-N)\alpha_0}{2}}|\log\varepsilon|\ \ \ \ \ \mbox{if}\ \ N\leq 6s.
\endaligned \right.
\end{eqnarray}
\end{lemma}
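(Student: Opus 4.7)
The plan is to prove each of the five estimates by combining three ingredients: the maximum principle for the fractional Laplacian (for the sign-definite quantity $\mathcal P_\varepsilon w_i$), the explicit scaling $w_{\lambda,\xi}(x)=\lambda^{-(N-2s)/2}w_{1,0}((x-\xi)/\lambda)$, and, for the non-sign-definite derivatives $\psi_i^j$, a triangle-inequality split
$$\|\mathcal P_\varepsilon\psi_i^j\|_{L^p(\Omega_\varepsilon)}\le\|\psi_i^j\|_{L^p(\Omega_\varepsilon)}+\|\psi_i^j-\mathcal P_\varepsilon\psi_i^j\|_{L^p(\Omega_\varepsilon)},$$
where the projection error is controlled by the pointwise expansion already recorded in Lemma~\ref{lemma A.1}. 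The $L^p$ norms of the explicit profiles $w_{1,0}$ and $\psi_{1,0}^j$ will be evaluated by a single change of variables $y=(x-\delta_i)/\lambda_i$ (Jacobian $\lambda_i^N$), so the whole matter reduces to integrability of $(1+|y|^2)^{-(N-2s)/2}$ and its derivatives on $\mathbb R^N$ or on the expanding domain $\Omega_\varepsilon\subset \mathbb R^N$ of diameter $\sim \varepsilon^{-\alpha_0}$.

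For (\ref{Appendix A.11}), since $(-\Delta)^s(w_i-\mathcal P_\varepsilon w_i)=0$ in $\Omega_\varepsilon$ and $w_i-\mathcal P_\varepsilon w_i=w_i\ge 0$ on $\mathbb R^N\setminus\Omega_\varepsilon$, the fractional maximum principle gives $0\le\mathcal P_\varepsilon w_i\le w_i$, and scaling shows $\|w_{\lambda,\delta}\|_{L^{2N/(N-2s)}(\mathbb R^N)}=\|w_{1,0}\|_{L^{2N/(N-2s)}(\mathbb R^N)}$ (a finite constant, since the exponent $p(N-2s)/2=N$ makes the $\lambda$-factor cancel). The same pointwise comparison reduces (\ref{Appendix A.14}) to bounding $\|w_i\|_{L^{2N/(N+2s)}(\Omega_\varepsilon)}$. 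When $N>6s$, the profile $w_{1,0}$ lies in $L^{2N/(N+2s)}(\mathbb R^N)$ and one computes $\|w_{\lambda,\delta}\|_{L^{2N/(N+2s)}(\mathbb R^N)}\lesssim \lambda^{2s}$, uniformly bounded on $[\eta,1/\eta]$. When $N\le 6s$, the integral diverges at infinity, so I would restrict to $\Omega_\varepsilon$ and split into $\{|x-\delta_i|\le\lambda_i\}$ (a bounded contribution) and $\{\lambda_i\le|x-\delta_i|\le C\varepsilon^{-\alpha_0}\}$, on which $w_i^{2N/(N+2s)}\sim\lambda_i^{N(N-2s)/(N+2s)}r^{-2N(N-2s)/(N+2s)}$; integration in $r$ gives $r^{N(6s-N)/(N+2s)}$ (if $N<6s$) or $\log r$ (if $N=6s$), and inserting $r\sim\varepsilon^{-\alpha_0}$ then taking the $p$-th root yields the claimed $C\varepsilon^{-(6s-N)\alpha_0/2}|\log\varepsilon|$. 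For (\ref{Appendix A.12}), (\ref{Appendix A.13}), (\ref{Appendix A.15}), I apply the triangle-inequality split above: Lemma~\ref{lemma A.1} bounds $|\psi_i^j-\mathcal P_\varepsilon\psi_i^j|\le C\varepsilon^{(N-2s)\alpha_0}$ (or $\varepsilon^{(N-2s+1)\alpha_0}$ for $j\ge 1$) pointwise, and this multiplied by $|\Omega_\varepsilon|^{1/p}\sim\varepsilon^{-N\alpha_0/p}$ is $o(1)$; the leading term $\|\psi_i^j\|_{L^p(\Omega_\varepsilon)}$ is then handled by the same scaling and split as above, using that $\psi_{1,0}^j(y)$ decays like $|y|^{-(N-2s+1)}$ when $j\ge 1$ (one power better than $w$, so $L^{2N/(N+2s)}$-integrable at infinity with room to spare) and like $|y|^{-(N-2s)}$ when $j=0$ (same threshold $N>6s$ as for $w$).

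The main obstacle is the $L^{2N/(N+2s)}$ analysis in the regime $N\le 6s$: because $w_{1,0}$ is not in $L^{2N/(N+2s)}(\mathbb R^N)$, one has to perform the integration directly on the expanding domain $\Omega_\varepsilon$ and keep track of the precise polynomial (resp.\ logarithmic, when $N=6s$) divergence of the tail. The arithmetic of the exponent $-2N(N-2s)/(N+2s)+N-1=N(6s-N)/(N+2s)-1$ produces the factor $\varepsilon^{-(6s-N)\alpha_0/2}$, and the borderline value $N=6s$ is responsible for the extra $|\log\varepsilon|$. A secondary technical point is verifying that the projection-error term in the triangle split really is of lower order in every regime; this follows from Lemma~\ref{lemma A.1} together with the fact that $H$ and $\partial_{\sigma^j}H$ are bounded on the set $\{\operatorname{dist}(\sigma,\partial\Omega)>\eta\}$ built into $\mathcal O_\eta$.
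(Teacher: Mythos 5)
The paper gives no proof of this lemma at all --- it is quoted verbatim from Lemma C.1--C.4 of \cite{Chio12} --- and your overall strategy (a comparison principle giving $0\le\mathcal{P}_\varepsilon w_i\le w_i$, explicit decay and scaling of $w_{1,0}$ and $\psi^j_{1,0}$, and control of the projection error through Lemma \ref{lemma A.1}) is essentially the standard route taken in that reference; your computations for (\ref{Appendix A.11}), (\ref{Appendix A.12}), (\ref{Appendix A.14}) and (\ref{Appendix A.15}) are correct. One step, however, is phrased for the wrong operator: you impose $w_i-\mathcal{P}_\varepsilon w_i=w_i\ge 0$ on $\mathbb R^N\setminus\Omega_\varepsilon$, which is the comparison principle for the restricted (integral) fractional Laplacian, whereas the paper's $(-\Delta)^s$ is the spectral one and $\mathcal{P}_\varepsilon$ is defined through the extension on the cylinder $\mathcal{C}_\varepsilon$; moreover $w_i|_{\Omega_\varepsilon}\notin H^s_0(\Omega_\varepsilon)$, so ``$(-\Delta)^s(w_i-\mathcal{P}_\varepsilon w_i)=0$ in $\Omega_\varepsilon$'' does not literally parse in this framework. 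The repair is standard and is what \cite{Chio12} does: $W_i-\mathcal{P}_\varepsilon W_i$ solves $\mathrm{div}(t^{1-2s}\nabla\,\cdot\,)=0$ in $\mathcal{C}_\varepsilon$, has vanishing conormal derivative on $\Omega_\varepsilon\times\{0\}$ and nonnegative lateral data $W_i$, hence is nonnegative by the maximum principle for this degenerate equation, and taking traces gives $0\le\mathcal{P}_\varepsilon w_i\le w_i$.

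The genuine gap is in (\ref{Appendix A.13}). Your claim that $\psi^j_{1,0}$, $j\ge 1$, is $L^{2N/(N+2s)}$-integrable at infinity ``with room to spare'' is false in general: the decay $|y|^{-(N-2s+1)}$ gives integrability precisely when $\frac{2N(N-2s+1)}{N+2s}>N$, i.e. $N>6s-2$, which fails for instance when $N=2$, $s\ge 2/3$ or $N=3$, $s\ge 5/6$. In that regime the leading term $\|\psi_i^j\|_{L^{2N/(N+2s)}(\Omega_\varepsilon)}$ grows like $\varepsilon^{-(6s-2-N)\alpha_0/2}$, and your control of the projection error --- pointwise bound $C\varepsilon^{(N-2s+1)\alpha_0}$ multiplied by $|\Omega_\varepsilon|^{(N+2s)/(2N)}\sim\varepsilon^{-(N+2s)\alpha_0/2}$ --- produces $\varepsilon^{(N+2-6s)\alpha_0/2}$, which is likewise unbounded there, so the triangle-inequality split does not deliver the uniform constant claimed in (\ref{Appendix A.13}); your argument proves it only under the restriction $N>6s-2$ (automatic if $N\ge 4$ or $s\le 1/2$). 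You should either add this restriction, give a finer argument in the low-dimensional large-$s$ range, or observe that the statement itself requires the same caveat. Note that for (\ref{Appendix A.15}) the same crude ``pointwise error times volume'' bound is harmless, since it yields exactly the loss $\varepsilon^{-(6s-N)\alpha_0/2}$ that the statement already allows when $N\le 6s$.
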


\begin{lemma}\label{lemma A.4}
For $i=1,...,k$ and $j=1,...,N$, we have
\begin{eqnarray}\label{Appendix A.16}
&\|\mathcal{P}_\varepsilon \psi_i^j-\psi_i^j\|_{L^\frac{2N}{N-2s}(\Omega_\varepsilon)}\leq C\varepsilon^{\alpha_0\frac{N-2s+2}{2}}
\end{eqnarray}
and
\begin{eqnarray}\label{Appendix A.17}
&\|\mathcal{P}_\varepsilon \psi_i^0-\psi_i^0\|_{L^\frac{2N}{N-2s}(\Omega_\varepsilon)}\leq C\varepsilon^{\alpha_0\frac{N-2s}{2}}.
\end{eqnarray}
\end{lemma}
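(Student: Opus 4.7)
The plan is to reduce both $L^{2N/(N-2s)}$ estimates to uniform pointwise bounds combined with a volume computation on $\Omega_\varepsilon$. First, invoke the pointwise expansions from Lemma \ref{lemma A.1}, which give, uniformly for $x\in\Omega_\varepsilon$,
\begin{align*}
\mathcal{P}_\varepsilon\psi_i^j(x)-\psi_i^j(x) &= -c_1\lambda_i^{(N-2s)/2}\tfrac{\partial H}{\partial\sigma^j}(\varepsilon^{\alpha_0}x,\sigma_i)\,\varepsilon^{(N-2s+1)\alpha_0}+o(\varepsilon^{(N-2s+1)\alpha_0}),\\
\mathcal{P}_\varepsilon\psi_i^0(x)-\psi_i^0(x) &= -\tfrac{c_1(N-2s)}{2}\lambda_i^{(N-2s-2)/2}\,H(\varepsilon^{\alpha_0}x,\sigma_i)\,\varepsilon^{(N-2s)\alpha_0}+o(\varepsilon^{(N-2s)\alpha_0}).
\end{align*}
Since $(\mbox{\boldmath $\lambda$},\mbox{\boldmath $\sigma$})\in\mathcal{O}_\eta$, the point $\sigma_i$ stays at distance $>\eta$ from $\partial\Omega$, and $\lambda_i$ stays in a fixed compact interval. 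The regular part $H(\cdot,\sigma_i)$ and its first $\sigma$-derivatives are smooth on $\Omega$ and extend boundedly up to $\partial\Omega$ because $G(\cdot,\sigma_i)$ vanishes there. Hence the coefficient functions evaluated at $(\varepsilon^{\alpha_0}x,\sigma_i)\in\bar\Omega\times\{\sigma_i\}$ are uniformly bounded, yielding the pointwise estimates
$$
|\mathcal{P}_\varepsilon\psi_i^j(x)-\psi_i^j(x)|\le C\,\varepsilon^{(N-2s+1)\alpha_0},\qquad |\mathcal{P}_\varepsilon\psi_i^0(x)-\psi_i^0(x)|\le C\,\varepsilon^{(N-2s)\alpha_0},
$$
valid for every $x\in\Omega_\varepsilon$ and every $\varepsilon$ sufficiently small.

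Next, pass to the $L^{2N/(N-2s)}$ norm via $\|f\|_{L^q(\Omega_\varepsilon)}\le\|f\|_{L^\infty(\Omega_\varepsilon)}\,|\Omega_\varepsilon|^{1/q}$ with $q=\tfrac{2N}{N-2s}$. Since $\Omega_\varepsilon=\varepsilon^{-\alpha_0}\Omega$ has measure $|\Omega_\varepsilon|=\varepsilon^{-N\alpha_0}|\Omega|$, the volume factor is $|\Omega_\varepsilon|^{(N-2s)/(2N)}\le C\varepsilon^{-\alpha_0(N-2s)/2}$. Multiplying by the two pointwise bounds produces, respectively, the exponents
$$
\alpha_0(N-2s+1)-\alpha_0\tfrac{N-2s}{2}=\alpha_0\tfrac{N-2s+2}{2},\qquad \alpha_0(N-2s)-\alpha_0\tfrac{N-2s}{2}=\alpha_0\tfrac{N-2s}{2},
$$
which are exactly (\ref{Appendix A.16}) and (\ref{Appendix A.17}).

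The only delicate input is the uniformity (in $x\in\Omega_\varepsilon$) of the $o(\cdot)$ remainders in Lemma \ref{lemma A.1}, but this is precisely the content of that lemma under the admissibility condition $\mathrm{dist}(\sigma_i,\partial\Omega)>\eta$. Once that uniformity is in hand, the remainder of the argument is a routine combination of a sup bound with the measure of the dilated domain, and no further cancellation or boundary-layer refinement is required because the large norm $L^{2N/(N-2s)}$ only loses a single factor of $\varepsilon^{-\alpha_0(N-2s)/2}$ against the full volume $\varepsilon^{-N\alpha_0}$.
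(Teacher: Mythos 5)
Your proposal is correct. The paper itself offers no argument for Lemma \ref{lemma A.4} (it is simply attributed to Lemma C.1--C.4 of \cite{Chio12}), and your derivation from Lemma \ref{lemma A.1} is exactly the natural one: the expansions give the uniform pointwise bounds $|\mathcal{P}_\varepsilon\psi_i^j-\psi_i^j|\le C\varepsilon^{(N-2s+1)\alpha_0}$ and $|\mathcal{P}_\varepsilon\psi_i^0-\psi_i^0|\le C\varepsilon^{(N-2s)\alpha_0}$ on $\Omega_\varepsilon$, and combining with $\|f\|_{L^{q}(\Omega_\varepsilon)}\le\|f\|_{L^\infty(\Omega_\varepsilon)}|\Omega_\varepsilon|^{1/q}$, $q=\frac{2N}{N-2s}$, $|\Omega_\varepsilon|^{1/q}=C\varepsilon^{-\alpha_0\frac{N-2s}{2}}$, yields precisely the exponents $\alpha_0\frac{N-2s+2}{2}$ and $\alpha_0\frac{N-2s}{2}$ of (\ref{Appendix A.16})--(\ref{Appendix A.17}); moreover nothing is lost in the sup-times-volume step, since the leading term of the difference is of uniform size (it does not decay away from $\delta_i$), so no finer integration could improve the rate. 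Two small points deserve a cleaner justification than the one you give in passing: the uniform boundedness of $H(\cdot,\sigma_i)$ and $\frac{\partial H}{\partial\sigma^j}(\cdot,\sigma_i)$ over $\Omega$ is best obtained from the maximum principle for the extension problem defining $H_\mathcal{C}$, whose lateral Dirichlet data $c_{N,s}|y-z|^{-(N-2s)}$ (and its $\sigma$-derivatives) are bounded by constants depending only on $\eta$ once $\mathrm{dist}(\sigma_i,\partial\Omega)>\eta$, rather than from a vague appeal to smoothness up to the boundary; and one should record that the resulting constant $C$ depends on $\eta$ through both this bound and the uniformity of the $o(\cdot)$ remainders in Lemma \ref{lemma A.1}, which is consistent with the lemma being used only for $(\mbox{\boldmath $\lambda$},\mbox{\boldmath $\sigma$})\in\mathcal{O}_\eta$.
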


Similarly to Lemma A.2 in \cite{Bartsch1}, Lemma A.3 in \cite{Musso31} and Lemma C.5 in \cite{Chio12}, we obtain the following Lemma.

\begin{lemma}\label{(M17)-Lemma A}
For any $\eta>0$ and for any $\varepsilon_0>0$ there exists $C>0$ such that for any $(\mbox{\boldmath $\lambda$},\mbox{\boldmath $\sigma$})\in\mathcal{O}_\eta$ and $\varepsilon\in(0,\varepsilon_0)$ we have
\begin{eqnarray}\label{Appendix A.18}
&\left\|f_0\left(\sum\limits_{i=1}^ka_i\mathcal{P}_\varepsilon w_i\right)- \sum\limits_{i=1}^ka_if_0(w_i)\right\|_{L^\frac{2N}{N+2s}(\Omega_\varepsilon)}\leq
\left\{\aligned
&C\varepsilon^{\frac{N+2s}{2}\alpha_0}\ \ \ \ \ \ \ \ \ \ \ \ \ \ \ \ \ \ \ \ \mbox{if}\ \ N>6s,\\
&C(\varepsilon+\varepsilon^{(N-2s)\alpha_0}|\ln\varepsilon|)\ \ \ \ \ \mbox{if}\ \ N=6s,\\
&C\varepsilon^{(N-2s)\alpha_0}\ \ \ \ \ \ \ \ \ \ \ \ \ \ \ \ \ \ \mbox{if}\ \ N<6s,
\endaligned \right.
\end{eqnarray}
\begin{eqnarray}\label{Appendix A.19}
&\left\|f'_0\left(\sum\limits_{i=1}^k a_i\mathcal{P}_\varepsilon w_i\right)- \sum\limits_{i=1}^ka_if'_0(w_i)\right\|_{L^\frac{N}{2s}(\Omega_\varepsilon)}\leq C\varepsilon^{2s\alpha_0}.
\end{eqnarray}
\end{lemma}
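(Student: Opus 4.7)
The plan is to follow the bubble-decomposition scheme used in \cite{Musso31} and \cite{Chio12}, exploiting that on the dilated domain $\Omega_\varepsilon$ the concentration points $\delta_i=\varepsilon^{-\alpha_0}\sigma_i$ are separated by a distance of order $\eta\varepsilon^{-\alpha_0}\to\infty$, so the $k$ bubbles have essentially disjoint supports. Setting $\Sigma := \sum_{i=1}^k a_i\mathcal{P}_\varepsilon w_i$ and $\widetilde\Sigma := \sum_{i=1}^k a_i w_i$, I would first write
\begin{equation*}
f_0(\Sigma) - \sum_{i=1}^k a_i f_0(w_i) = \bigl[f_0(\Sigma) - f_0(\widetilde\Sigma)\bigr] + \Bigl[f_0(\widetilde\Sigma) - \sum_{i=1}^k a_i f_0(w_i)\Bigr] =: \mathcal{I}_1 + \mathcal{I}_2.
\end{equation*}
The first piece is the projection error, handled by the mean value theorem and the pointwise bound $|\mathcal{P}_\varepsilon w_i-w_i|\leq C\varepsilon^{(N-2s)\alpha_0}$ supplied by Lemma \ref{lemma A.1}; the second is the genuine bubble interaction.

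For $\mathcal{I}_2$, I would use the standard elementary inequality
\begin{equation*}
\Bigl|f_0\bigl(\textstyle\sum_i a_i w_i\bigr)-\sum_i a_i f_0(w_i)\Bigr|\leq
\begin{cases} C \sum_{i\neq j} w_i^{p-1}w_j & \text{if } p\geq 2,\\ C\sum_{i\neq j}(w_iw_j)^{p/2} & \text{if } 1<p<2,\end{cases}
\end{equation*}
and then partition $\Omega_\varepsilon = A_0\cup\bigcup_{i=1}^k A_i$ with $A_i = B(\delta_i,\rho\varepsilon^{-\alpha_0})\cap\Omega_\varepsilon$ for a fixed $\rho<\eta/2$. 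On $A_i$ the bubble $w_i$ is the concentrated profile while each $w_j$ ($j\neq i$) is dominated by $C\varepsilon^{(N-2s)\alpha_0}$; on $A_0$ all bubbles are of that order. A rescaling $y=\lambda_i^{-1}(x-\delta_i)$ reduces each term to $\int_{\mathbb R^N}(1+|y|^2)^{-\mu N/2}\,dy$, convergent iff $\mu>1$, which produces the trichotomy $N\gtrless 6s$ in \eqref{Appendix A.18}: the threshold $p=2$ (equivalently $N=6s$) is exactly where the cross integral $\int w_i^{p-1}w_j$ is logarithmically divergent at the scale $\varepsilon^{-\alpha_0}$.

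The second estimate \eqref{Appendix A.19} for $f_0'(u)=p|u|^{p-1}$ in $L^{N/2s}(\Omega_\varepsilon)$ proceeds in parallel, using the appropriate pointwise bound for $|t|^{p-1}$ (Hölder when $p<2$, Lipschitz when $p\geq 2$). Since $(p-1)\cdot\frac{N}{2s}=\frac{2N}{N-2s}$, the $L^{N/2s}$ norm of a single bubble is $O(1)$; the cross terms give $\varepsilon^{2s\alpha_0}$ via the same scaling computation as above, and no case distinction arises because the critical scaling here is always strictly subcritical.

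The main obstacle will be the borderline case $N=6s$ of \eqref{Appendix A.18}, where extracting the exact $\varepsilon|\ln\varepsilon|$ factor demands a further split of $A_i$ into an inner ball of radius $\sim\varepsilon^{-\alpha_0}/2$ (on which $w_j$ is essentially constant and produces the logarithm through $\int_1^{\varepsilon^{-\alpha_0}}r^{-1}\,dr$) and its complement (where Lemma \ref{lemma A.2} applies). Outside this borderline the integrals converge absolutely and the bound reduces to routine bookkeeping of explicit power-type integrals, uniform in $(\mbox{\boldmath $\lambda$},\mbox{\boldmath $\sigma$})\in\mathcal{O}_\eta$ thanks to the compactness encoded in the admissible set.
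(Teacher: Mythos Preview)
Your scheme is essentially the paper's: partition into balls around the concentration points, reduce to the single dominant bubble on each ball, and invoke Lemma~\ref{lemma A.1} together with the mean value theorem for the projection error. The paper does not introduce the intermediate $\widetilde\Sigma=\sum a_iw_i$; instead it first rescales to $\Omega$ via $x=\varepsilon^{\alpha_0}y$ and on each ball $B(\sigma_j,\eta/2)$ passes directly from $\bigl(\sum_i a_i\mathcal P_\varepsilon w_i\bigr)^p$ to $(\mathcal P_\varepsilon w_j)^p$ and then to $w_j^p$, absorbing both the cross-bubble interaction and the projection error into a single mean-value step of the form $w_j^{p-1}\cdot O(\varepsilon^{(N-2s)\alpha_0})$. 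Your $\mathcal I_1+\mathcal I_2$ split is cleaner conceptually but leads to the same integrals.

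One caveat: the pointwise inequality you state for $1<p<2$, namely $\bigl|f_0(\sum a_iw_i)-\sum a_if_0(w_i)\bigr|\le C\sum_{i\neq j}(w_iw_j)^{p/2}$, is a valid upper bound but is \emph{strictly weaker} than the bound $w_i^{p-1}w_j$ (dominant bubble in front), and it costs you a spurious logarithm. Indeed, on $A_j$ with $w_i\le C\varepsilon^{(N-2s)\alpha_0}$ one gets
\[
\int_{A_j}(w_iw_j)^{\frac{pN}{N+2s}}\le C\,\varepsilon^{N\alpha_0}\int_{|y|\le c\varepsilon^{-\alpha_0}}(1+|y|^2)^{-N/2}\,dy\sim C\,\varepsilon^{N\alpha_0}|\ln\varepsilon|,
\]
which after taking the $(N+2s)/(2N)$-th root yields $C\varepsilon^{(N+2s)\alpha_0/2}|\ln\varepsilon|^{(N+2s)/(2N)}$ rather than the stated $C\varepsilon^{(N+2s)\alpha_0/2}$ for $N>6s$. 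The fix is immediate: on each $A_j$ use the mean value theorem to get $|f_0(\widetilde\Sigma)-f_0(a_jw_j)|\le Cw_j^{p-1}\sum_{i\neq j}w_i$, valid for \emph{all} $p>1$ since the intermediate point is comparable to $w_j$. This is precisely what the paper does, and with it your argument goes through without modification to give the sharp bound in every regime.
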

\begin{proof}  We just prove the first inequality by using Lemma A.1 in \cite{Musso34}. The proof of (\ref{Appendix A.19}) is similar. Since $(\mbox{\boldmath $\lambda$},\mbox{\boldmath $\sigma$})\in\mathcal{O}_\eta$ it holds $|\sigma_i-\sigma_j|>\eta$ for any $i\neq j (i, j=1,...,k)$. We have
\begin{align*}
&\left\|f_0\left(\sum_{i=1}^ka_i\mathcal{P}_\varepsilon w_i\right)- \sum_{i=1}^ka_if_0(w_i)\right\|^{\frac{2N}{N+2s}}_{L^\frac{2N}{N+2s}(\Omega_\varepsilon)}\\
&=\int_{\Omega_\varepsilon}\left|\left|\sum_{i=1}^ka_i\mathcal{P}_\varepsilon w_i(y)\right|^{p-1}\left(\sum_{i=1}^ka_i\mathcal{P}_\varepsilon w_i(y)\right)-\sum_{i=1}^ka_iw_i^p(y)\right|^\frac{2N}{N+2s}dy\ \ \ ( \mbox{set}\ \ x=\varepsilon^{\alpha_0}y)\\
&=\int_\Omega\left|\left|\sum_{i=1}^ka_i\mathcal{P}_\varepsilon w_{\lambda_i\varepsilon^{\alpha_0},\sigma_i}(x)\right|^{p-1}\left(\sum_{i=1}^ka_i\mathcal{P}_\varepsilon w_{\lambda_i\varepsilon^{\alpha_0},\sigma_i}(x)\right)-\sum_{i=1}^ka_iw_{\lambda_i\varepsilon^{\alpha_0},\sigma_i}^p(x)\right|^\frac{2N}{N+2s}dx\\
&=\sum_{j=1}^k\int_{B(\sigma_j,\eta/2)}\left|\left|\sum_{i=1}^ka_i\mathcal{P}_\varepsilon w_{\lambda_i\varepsilon^{\alpha_0},\sigma_i}(x)\right|^{p-1}\left(\sum_{i=1}^ka_i\mathcal{P}_\varepsilon w_{\lambda_i\varepsilon^{\alpha_0},\sigma_i}(x)\right)-\sum_{i=1}^ka_iw_{\lambda_i\varepsilon^{\alpha_0},\sigma_i}^p(x)\right|^\frac{2N}{N+2s}dx\\
&+\int_{\Omega\setminus{\bigcup\limits_{j=1}^kB(\sigma_j,\eta/2)}}\left|\left|\sum_{i=1}^ka_i\mathcal{P}_\varepsilon w_{\lambda_i\varepsilon^{\alpha_0},\sigma_i}(x)\right|^{p-1}\left(\sum_{i=1}^ka_i\mathcal{P}_\varepsilon w_{\lambda_i\varepsilon^{\alpha_0},\sigma_i}(x)\right)-\sum_{i=1}^ka_iw_{\lambda_i\varepsilon^{\alpha_0},\sigma_i}^p(x)\right|^\frac{2N}{N+2s}dx.\\
\end{align*}
Firstly,
\begin{align*}
&\int_{\Omega\setminus{\bigcup\limits_{j=1}^kB(\sigma_j,\eta/2)}}\left|\left|\sum_{i=1}^ka_i\mathcal{P}_\varepsilon w_{\lambda_i\varepsilon^{\alpha_0},\sigma_i}(x)\right|^{p-1}\left(\sum_{i=1}^ka_i\mathcal{P}_\varepsilon w_{\lambda_i\varepsilon^{\alpha_0},\sigma_i}(x)\right)-\sum_{i=1}^ka_iw_{\lambda_i\varepsilon^{\alpha_0},\sigma_i}^p(x)\right|^\frac{2N}{N+2s}dx\\
&\leq C\sum_{i=1}^k\int_{\Omega\setminus{\bigcup\limits_{j=1}^kB(\sigma_j,\eta/2)}}w_{\lambda_i\varepsilon^{\alpha_0},\sigma_i}^{p\frac{2N}{N+2s}}dx\\
&\leq C\sum_{i=1}^k(\lambda_i\varepsilon^{\alpha_0})^N\\
&\leq C\varepsilon^{N{\alpha_0}}.
\end{align*}
Secondly,
\begin{align*}
&\int_{B(\sigma_j,\eta/2)}\left|\left|\sum_{i=1}^ka_i\mathcal{P}_\varepsilon w_{\lambda_i\varepsilon^{\alpha_0},\sigma_i}(x)\right|^{p-1}\left(\sum_{i=1}^ka_i\mathcal{P}_\varepsilon w_{\lambda_i\varepsilon^{\alpha_0},\sigma_i}(x)\right)-\sum_{i=1}^ka_iw_{\lambda_i\varepsilon^{\alpha_0},\sigma_i}^p(x)\right|^\frac{2N}{N+2s}dx\\
&\leq C\int_{B(\sigma_j,\eta/2)}\left|\left(\sum_{i=1}^k\mathcal{P}_\varepsilon w_{\lambda_i\varepsilon^{\alpha_0},\sigma_i}(x)\right)^p-w_{\lambda_j\varepsilon^{\alpha_0},\sigma_j}^p(x)\right|^\frac{2N}{N+2s}dx\\
&\ \ \ +C\sum\limits_{i=1, i\neq j}^k\int_{B(\sigma_j,\eta/2)}|w_{\lambda_i\varepsilon^{\alpha_0},\sigma_i}^p(x)|^\frac{2N}{N+2s}dx\\
&\leq C\int_{B(\sigma_j,\eta/2)}\left|\mathcal{P}_\varepsilon w_{\lambda_j\varepsilon^{\alpha_0},\sigma_j}^p(x) -w_{\lambda_j\varepsilon^{\alpha_0},\sigma_j}^p(x)\right|^\frac{2N}{N+2s}dx\\
&\ \ \ +C\sum\limits_{i=1, i\neq j}^k\int_{B(\sigma_j,\eta/2)}|w_{\lambda_i\varepsilon^{\alpha_0},\sigma_i}^p(x)|^\frac{2N}{N+2s}dx+C\varepsilon^{N{\alpha_0}}.
\end{align*}
It is easy to see that
\begin{align*}
&\sum\limits_{i=1, i\neq j}^k\int_{B(\sigma_j,\eta/2)}|w_{\lambda_i\varepsilon^{\alpha_0},\sigma_i}^p(x)|^\frac{2N}{N+2s}dx\\
&\leq \sum\limits_{i=1, i\neq j}^k\int_{B(\sigma_j,\eta/2)}\left|\frac{\lambda_i\varepsilon^{\alpha_0}}{(\lambda_i\varepsilon^{\alpha_0})^2+|x-\sigma_i|^2}\right|^Ndx\\
&\leq C\varepsilon^{N{\alpha_0}}.
\end{align*}
For $N>6s$, by using Lemma \ref{lemma A.1} and the mean value theorem, we get that
\begin{align*}
&\int_{B(\sigma_j,\eta/2)}\left|\mathcal{P}_\varepsilon w_{\lambda_j\varepsilon^{\alpha_0},\sigma_j}^p(x) -w_{\lambda_j\varepsilon^{{\alpha_0},\sigma_j}}^p(x)\right|^\frac{2N}{N+2s}dx\\
&=p\int_{B(\sigma_j,\eta/2)}|(w_{\lambda_j\varepsilon^{\alpha_0},\sigma_j}+\theta(x)(\mathcal{P}_\varepsilon w_{\lambda_j\varepsilon^{\alpha_0},\sigma_j}(x) -w_{\lambda_j\varepsilon^{\alpha_0},\sigma_j}(x)))^{p-1}\\
&\ \ \ \ \times(\mathcal{P}_\varepsilon w_{\lambda_j\varepsilon^{\alpha_0},\sigma_j}(x)-w_{\lambda_j\varepsilon^{\alpha_0},\sigma_j}(x))|^\frac{2N}{N+2s}dx\\
&\leq C\varepsilon^{N{\alpha_0}}.
\end{align*}
Therefore if $N>6s$, we have
\begin{align*}
&\left\|f_0\left(\sum_{i=1}^ka_i\mathcal{P}_\varepsilon w_i\right)- \sum_{i=1}^ka_if_0(w_i)\right\|_{L^\frac{2N}{N+2s}(\Omega_\varepsilon)}\leq C\varepsilon^{{\frac{N+2s}{2}\alpha_0}}.
\end{align*}

Moreover, if $N=6s$, we have
\begin{align*}
&\int_{B(\sigma_j,\eta/2)}\left|\mathcal{P}_\varepsilon w_{\lambda_j\varepsilon^{\alpha_0},\sigma_j}^p(x) -w_{\lambda_j\varepsilon^{{\alpha_0},\sigma_j}}^p(x)\right|^\frac{2N}{N+2s}dx\\
&=p\int_{B(\sigma_j,\eta/2)}|(w_{\lambda_j\varepsilon^{\alpha_0},\sigma_j}+\theta(x)(\mathcal{P}_\varepsilon w_{\lambda_j\varepsilon^{\alpha_0},\sigma_j}(x) -w_{\lambda_j\varepsilon^{\alpha_0},\sigma_j}(x)))^{p-1}\\
&\ \ \ \ \times(\mathcal{P}_\varepsilon w_{\lambda_j\varepsilon^{\alpha_0},\sigma_j}(x) -w_{\lambda_j\varepsilon^{\alpha_0},\sigma_j}(x))|^\frac{2N}{N+2s}dx\\
&\leq C\int_{B(\sigma_j,\eta/2)}\left|\frac{\lambda_j\varepsilon^{\alpha_0}}{({\lambda_j\varepsilon^{\alpha_0}})^2+|x-\sigma_j|^2}\right|^\frac{4sN}{N+2s}
(\varepsilon^{\alpha_0})^\frac{N(N-2s)}{N+2s}dx\\
&=C\int_0^\frac{1}{\varepsilon^{\alpha_0}}\left(\frac{1}{1+\rho^2}\right)^\frac{4sN}{N+2s}(\varepsilon^{\alpha_0})^{\frac{N(N-2s)}{N+2s}+N-\frac{4sN}{N+2s}}\rho^{N-1}
d\rho\\
&\leq C\varepsilon^{\frac{2N}{N+2s}}|\ln\varepsilon|.
\end{align*}
On the other hand, if $2s<N<6s$, using the substitution $x-\sigma_j=\lambda_j\varepsilon^{\alpha_0}z$, we get
\begin{align*}
&\int_{B(\sigma_j,\eta/2)}\left|\mathcal{P}_\varepsilon w_{\lambda_j\varepsilon^{\alpha_0},\sigma_j}^p(x) -w_{\lambda_j\varepsilon^{{\alpha_0},\sigma_j}}^p(x)\right|^\frac{2N}{N+2s}dx\\
&\leq C\varepsilon^{\frac{2N}{N+2s}}\int_{\mathbb R^N}\frac{1}{(1+z^2)^\frac{4Ns}{N+2s}}dz\\
&\leq C\varepsilon^{\frac{2N}{N+2s}}.
\end{align*}
\end{proof}
\begin{lemma}\label{lemma A.6}
For any $\eta>0$ and $\varepsilon_0>0$, there exists $C>0$ such that for any $(\mbox{\boldmath $\lambda$},\mbox{\boldmath $\sigma$})\in\mathcal{O}_\eta$ and for any $\varepsilon\in(0,\varepsilon_0)$ we have for $h=1,...,k$ and $j=0,1,...,N$
\begin{eqnarray}\label{Appendix A.20}
&\left\|\left[f'_0\left(\sum\limits_{i=1}^ka_i\mathcal{P}_\varepsilon w_i\right)- \sum\limits_{i=1}^ka_if'_0(w_i)\right]\mathcal{P}_\varepsilon \psi_h^j\right\|_{L^\frac{2N}{N+2s}(\Omega_\varepsilon)}\leq
C\varepsilon^{\frac{N+2s}{2}\alpha_0}.
\end{eqnarray}
\end{lemma}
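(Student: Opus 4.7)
The plan is to adapt the strategy of Lemma \ref{(M17)-Lemma A}, exploiting the additional multiplicative factor $\mathcal{P}_\varepsilon\psi_h^j$ which both localizes the integrand and supplies extra $\varepsilon^{\alpha_0}$-decay. After the substitution $x=\varepsilon^{\alpha_0}y$ and use of the scaling identities (\ref{Appendix A.3})--(\ref{Appendix A.4}), the left-hand side reduces to an $L^{2N/(N+2s)}$-integral over $\Omega$ with bubbles concentrated at $\sigma_i$ with parameter $\lambda_i\varepsilon^{\alpha_0}$. Since $|\sigma_i-\sigma_j|>\eta$ for $i\neq j$, the balls $B_j:=B(\sigma_j,\eta/2)\subset\Omega$ are pairwise disjoint, and I split $\Omega = R\cup\bigcup_j B_j$ with $R:=\Omega\setminus\bigcup_j B_j$.

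On $R$, Lemma \ref{lemma A.2} yields $|\mathcal{P}_\varepsilon w_i|,|w_i|,|\mathcal{P}_\varepsilon\psi_h^j|=O(\varepsilon^{(N-2s)\alpha_0})$, and a crude pointwise bound produces a contribution of order $\varepsilon^{N\alpha_0}$, well within the target. On each ball $B_j$, $\mathcal{P}_\varepsilon w_j$ dominates the sum while all competing quantities (the other bubbles $w_i$ with $i\neq j$, and the projection defect $\mathcal{P}_\varepsilon w_j-w_j$) are of order $\varepsilon^{(N-2s)\alpha_0}$ by Lemmas \ref{lemma A.1}--\ref{lemma A.2}. Decomposing the bracket as
\[
\Big[f'_0\big({\textstyle\sum_i} a_i\mathcal{P}_\varepsilon w_i\big)-f'_0(a_j\mathcal{P}_\varepsilon w_j)\Big]+\Big[f'_0(a_j\mathcal{P}_\varepsilon w_j)-a_j f'_0(w_j)\Big]-\sum_{i\neq j}a_i f'_0(w_i),
\]
each piece can be controlled by the elementary pointwise inequality
\[
\bigl||A+B|^{p-1}-|A|^{p-1}\bigr|\le C\begin{cases}|B|^{p-1}, & 0<p-1\le 1,\\ |A|^{p-2}|B|+|B|^{p-1}, & p-1\ge 1,\end{cases}
\]
with $A\sim w_j$ and $|B|=O(\varepsilon^{(N-2s)\alpha_0})$.

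Multiplying through by $|\mathcal{P}_\varepsilon\psi_h^j|$: for $h\neq j$ this factor is itself $O(\varepsilon^{(N-2s)\alpha_0})$ on $B_j$ by Lemma \ref{lemma A.2}, so the integrand is immediately small enough. In the delicate case $h=j$, I would replace $\mathcal{P}_\varepsilon\psi_h^j$ by $\psi_h^j$ up to a $O(\varepsilon^{(N-2s+2)\alpha_0/2})$ correction (Lemma \ref{lemma A.4}) and perform the rescaling $x-\sigma_j=\lambda_j\varepsilon^{\alpha_0}z$ to reduce each contribution to a dimensionless integral of $w$ and its derivatives on $\mathbb R^N$; the Jacobian combined with the $\varepsilon^{\alpha_0}$-power carried by the derivative in $\psi$ yields precisely the factor $\varepsilon^{\frac{N+2s}{2}\alpha_0}$ prescribed by the lemma.

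The principal obstacle is the borderline case $h=j$ with $p-1\ge 1$, that is $2s<N\le 6s$: here the mixed term $|A|^{p-2}|B|\,|\psi_h^j|$ produces, after rescaling, the critical dimensionless integral, whose convergence—together with the extra smallness of $\psi^j$ relative to $w^{p-1}$—is what absorbs the $|\ln\varepsilon|$ loss appearing at $N=6s$ in Lemma \ref{(M17)-Lemma A}. Verifying that all three dimension regimes ($N>6s$, $N=6s$, $2s<N<6s$) produce the same uniform exponent $\frac{N+2s}{2}\alpha_0$ is the routine but careful bookkeeping that closes the proof.
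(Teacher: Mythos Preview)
Your proposal is correct and follows essentially the same strategy as the paper. The only organizational difference is that the paper first globally replaces $\mathcal{P}_\varepsilon\psi_h^j$ by $\psi_h^j$ (bounding the correction via H\"older together with (\ref{Appendix A.19}) and Lemma~\ref{lemma A.4}) and then, exploiting that $\psi_h^j$ is concentrated near $\sigma_h$, splits only into $B(\sigma_h,\eta/2)$ and its complement rather than into all $k$ balls $B_j$; your case distinction $h=j$ versus $h\neq j$ is a minor reorganization of the same computation.
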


\begin{proof}
 Since $(\mbox{\boldmath $\lambda$},\mbox{\boldmath $\sigma$})\in\mathcal{O}_\eta$ it holds $|\sigma_i-\sigma_j|>\eta$ for any $i\neq j(i, j=1,...,k)$, by using Lemma \ref{lemma A.4} and Lemma \ref{(M17)-Lemma A}, we have
\begin{align*}
&\int_{\Omega_\varepsilon}\left(\left|f'_0\left(\sum_{i=1}^ka_i\mathcal{P}_\varepsilon w_i\right)- \sum_{i=1}^ka_if'_0(w_i)\right||\mathcal{P}_\varepsilon \psi_h^j|\right)^\frac{2N}{N+2s}dx\\
&\leq\int_{\Omega_\varepsilon}\left(\left|f'_0\left(\sum_{i=1}^ka_i\mathcal{P}_\varepsilon w_i\right)-\sum_{i=1}^ka_if'_0(w_i)\right||\mathcal{P}_\varepsilon \psi_h^j-\psi_h^j|\right)^\frac{2N}{N+2s}dx\\
&\ \ \ +\int_{\Omega_\varepsilon}\left(\left|f'_0\left(\sum_{i=1}^ka_i\mathcal{P}_\varepsilon w_i\right)- \sum_{i=1}^ka_if'_0(w_i)\right||\psi_h^j|\right)^\frac{2N}{N+2s}dx\\
&\leq\left\|f'_0\left(\sum_{i=1}^ka_i\mathcal{P}_\varepsilon w_i\right)- \sum_{i=1}^ka_if'_0(w_i)\right\|_{L^\frac{N}{2s}(\Omega_\varepsilon)}^\frac{2N}{N+2s}\|\mathcal{P}_\varepsilon \psi_h^j-\psi_h^j\|_{L^\frac{2N}{N-2s}(\Omega_\varepsilon)}^\frac{2N}{N+2s}\\
&\ \ \ +\int_{\Omega_\varepsilon}\left(\left|f'_0\left(\sum_{i=1}^ka_i\mathcal{P}_\varepsilon w_i\right)- \sum_{i=1}^ka_if'_0(w_i)\right||\psi_h^j|\right)^\frac{2N}{N+2s}dx.
\end{align*}

Now by using (\ref{Appendix A.3}) and (\ref{Appendix A.4}) we have
\begin{align*}
&\int_{\Omega_\varepsilon}\left(\left|f'_0\left(\sum_{i=1}^ka_i\mathcal{P}_\varepsilon w_i\right)- \sum_{i=1}^ka_if'_0(w_i)\right||\psi_h^j|\right)^\frac{2N}{N+2s}dy\ \ \ ( \mbox{set}\ \ x=\varepsilon^{\alpha_0}y)\\
&=\varepsilon^\frac{2N\alpha_0}{N+2s}\int_\Omega\left(\left|f'_0\left(\sum_{i=1}^ka_i\mathcal{P}_\varepsilon w_{\lambda_i\varepsilon^{\alpha_0},\sigma_i}\right)- \sum_{i=1}^ka_if'_0(w_{\lambda_i\varepsilon^{\alpha_0},\sigma_i})\right||\psi_{\lambda_h\varepsilon^{\alpha_0},\sigma_h}^j|\right)^\frac{2N}{N+2s}dx\\
&\leq\varepsilon^\frac{2N\alpha_0}{N+2s}\int_{B(\sigma_h,\eta/2)}\left(\left|f'_0\left(\sum_{i=1}^ka_i\mathcal{P}_\varepsilon w_{\lambda_i\varepsilon^{\alpha_0},\sigma_i}\right)- \sum_{i=1}^ka_if'_0(w_{\lambda_i\varepsilon^{\alpha_0},\sigma_i})\right||\psi_{\lambda_h\varepsilon^{\alpha_0},\sigma_h}^j|\right)^\frac{2N}{N+2s}dx\\
&\ \ \ +\varepsilon^\frac{2N\alpha_0}{N+2s}\int_{\Omega\setminus B(\sigma_h,\eta/2)}\left(\left|f'_0\left(\sum_{i=1}^ka_i\mathcal{P}_\varepsilon w_{\lambda_i\varepsilon^{\alpha_0},\sigma_i}\right)- \sum_{i=1}^ka_if'_0(w_{\lambda_i\varepsilon^{\alpha_0},\sigma_i})\right||\psi_{\lambda_h\varepsilon^{\alpha_0},\sigma_h}^j|\right)^\frac{2N}{N+2s}dx.\\
\end{align*}
Firstly, by Lemma \ref{lemma A.1}, we get
\begin{align*}
&\int_{B(\sigma_h,\eta/2)}\left(\left|f'_0\left(\sum_{i=1}^ka_i\mathcal{P}_\varepsilon w_{\lambda_i\varepsilon^{\alpha_0},\sigma_i}\right)- \sum_{i=1}^ka_if'_0(w_{\lambda_i\varepsilon^{\alpha_0},\sigma_i})\right||\psi_{\lambda_h\varepsilon^{\alpha_0},\sigma_h}^j|\right)^\frac{2N}{N+2s}dx\\
&\leq C\int_{B(\sigma_h,\eta/2)}\left(\left|f'_0\left(\sum_{i=1}^ka_i\mathcal{P}_\varepsilon w_{\lambda_i\varepsilon^{\alpha_0},\sigma_i}\right)- a_hf'_0(w_{\lambda_h\varepsilon^{\alpha_0},\sigma_h})\right||\psi_{\lambda_h\varepsilon^{\alpha_0},\sigma_h}^j|\right)^\frac{2N}{N+2s}dx\\
&\ \ \ +C\sum\limits_{i=1, i\neq h}^{k}\int_{B(\sigma_h,\eta/2)}|f'_0(w_{\lambda_i\varepsilon^{\alpha_0},\sigma_i})||\psi_{\lambda_h\varepsilon^{\alpha_0},\sigma_h}^j|^\frac{2N}{N+2s}dx\\
&\leq C\int_{B(\sigma_h,\eta/2)}|\mathcal{P}_\varepsilon w_{\lambda_h\varepsilon^{\alpha_0},\sigma_h}- w_{\lambda_h\varepsilon^{\alpha_0},\sigma_h}|^\frac{8Ns}{(N+2s)(N-2s)}|\psi_{\lambda_h\varepsilon^{\alpha_0},\sigma_h}^j|^\frac{2N}{N+2s}dx\\
&\ \ \ +C\sum\limits_{i=1, i\neq h}^{k}\int_{B(\sigma_h,\eta/2)}|w_{\lambda_i\varepsilon^{\alpha_0},\sigma_i}|^\frac{8Ns}{(N+2s)(N-2s)}|\psi_{\lambda_h\varepsilon^{\alpha_0},\sigma_h}^j|^\frac{2N}{N+2s}dx\\
&\leq C\varepsilon^\frac{\alpha_0N^2}{N+2s}.
\end{align*}
Secondly, we have
\begin{align*}
&\int_{\Omega\setminus B(\sigma_h,\eta/2)}\left(\left|f'_0\left(\sum_{i=1}^ka_i\mathcal{P}_\varepsilon w_{\lambda_i\varepsilon^{\alpha_0},\sigma_i}\right)- \sum_{i=1}^ka_if'_0(w_{\lambda_i\varepsilon^{\alpha_0},\sigma_i})\right||\psi_{\lambda_h\varepsilon^{\alpha_0},\sigma_h}^j|\right)^\frac{2N}{N+2s}dx\\
&\leq\sum_{i=1}^k\int_{\Omega\setminus B(\sigma_h,\eta/2)}|w_{\lambda_i\varepsilon^{\alpha_0},\sigma_i}|^\frac{8Ns}{(N+2s)(N-2s)}|\psi_{\lambda_h\varepsilon^{\alpha_0},\sigma_h}^j|^\frac{2N}{N+2s}dx\\
&\leq C\varepsilon^\frac{\alpha_0N^2}{N+2s}.
\end{align*}
By the above estimations the proof is complete.
\end{proof}

\begin{lemma}\label{lemma A.7}
For any $\eta>0$ and $\varepsilon_0>0$ there exists $C>0$ such that for any $(\mbox{\boldmath $\lambda$},\mbox{\boldmath $\sigma$})\in\mathcal{O}_\eta$ and for any $\varepsilon\in(0,\varepsilon_0)$ we have
\begin{eqnarray}\label{lemma A.7-1}
&\left\| f_\varepsilon\left(\sum\limits_{i=1}^ka_i\mathcal{P}_\varepsilon w_i\right)-f_0\left(\sum\limits_{i=1}^ka_i\mathcal{P}_\varepsilon w_i\right)\right\|_{L^\frac{2N}{N+2s}(\Omega_\varepsilon)}\leq C\varepsilon|\ln\varepsilon|.
\end{eqnarray}
\begin{eqnarray}\label{lemma A.7-2}
&\left\|f'_\varepsilon\left(\sum\limits_{i=1}^ka_i\mathcal{P}_\varepsilon w_i\right)-f'_0\left(\sum\limits_{i=1}^ka_i\mathcal{P}_\varepsilon w_i\right)\right\|_{L^\frac{N}{2s}(\Omega_\varepsilon)}\leq C\varepsilon|\ln\varepsilon|,
\end{eqnarray}
\end{lemma}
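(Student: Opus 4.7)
The plan is to reduce both bounds to an elementary pointwise estimate and then integrate as in Lemma \ref{(M17)-Lemma A}. The starting point is the identity
\[
|t|^{-\varepsilon}-1 \;=\; -\varepsilon \int_0^1 |t|^{-\tau\varepsilon}\,\ln|t|\,d\tau,\qquad t\neq 0,
\]
which, combined with $|t|^{-\tau\varepsilon}\le\max(1,|t|^{-\varepsilon})$ for $\tau\in(0,1)$, yields the master bound
\[
\bigl||t|^{-\varepsilon}-1\bigr|\;\le\;\varepsilon\,\bigl|\ln|t|\bigr|\,\max\bigl(1,|t|^{-\varepsilon}\bigr).
\]
Writing $S_\varepsilon=\sum_{i=1}^{k}a_i\mathcal{P}_\varepsilon w_i$, the decomposition $f_\varepsilon(s)-f_0(s)=|s|^{p-1}s\bigl(|s|^{-\varepsilon}-1\bigr)$ and the identity $f'_\varepsilon(s)-f'_0(s)=p|s|^{p-1}\bigl(|s|^{-\varepsilon}-1\bigr)-\varepsilon|s|^{p-1-\varepsilon}$ immediately translate the master bound into
\[
|f_\varepsilon(S_\varepsilon)-f_0(S_\varepsilon)|\le C\varepsilon|S_\varepsilon|^{p}\bigl(1+\bigl|\ln|S_\varepsilon|\bigr|\bigr)\max\bigl(1,|S_\varepsilon|^{-\varepsilon}\bigr),
\]
\[
|f'_\varepsilon(S_\varepsilon)-f'_0(S_\varepsilon)|\le C\varepsilon|S_\varepsilon|^{p-1}\bigl(1+\bigl|\ln|S_\varepsilon|\bigr|\bigr)\max\bigl(1,|S_\varepsilon|^{-\varepsilon}\bigr).
\]

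Next I would rescale $y\mapsto x=\varepsilon^{\alpha_0}y$ so that $\Omega_\varepsilon$ becomes $\Omega$ and, in view of (\ref{Appendix A.3}), the sum $S_\varepsilon$ becomes $\varepsilon^{-(N-2s)\alpha_0/2}\sum_{i}a_i\mathcal{P}_\varepsilon w_{\lambda_i\varepsilon^{\alpha_0},\sigma_i}(x)$. Decomposing $\Omega=\bigcup_{j=1}^{k}B(\sigma_j,\eta/2)\cup\Bigl(\Omega\setminus\bigcup_{j}B(\sigma_j,\eta/2)\Bigr)$ as in Lemma \ref{(M17)-Lemma A}, I would observe that inside each ball $B(\sigma_j,\eta/2)$ the sum $S_\varepsilon$ is dominated by the single bubble $a_j\mathcal{P}_\varepsilon w_{\lambda_j\varepsilon^{\alpha_0},\sigma_j}$, whose supremum is of order $(\lambda_j\varepsilon^{\alpha_0})^{-(N-2s)/2}\sim \varepsilon^{-1/2}$. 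Wherever $|S_\varepsilon|$ is non-negligible one therefore has $\bigl|\ln|S_\varepsilon|\bigr|\le C|\ln\varepsilon|$. Using $p\cdot\frac{2N}{N+2s}=p+1=(p-1)\cdot\frac{N}{2s}$ and the uniform bound $\int_{\mathbb R^{N}}w^{p+1}=c_0<\infty$ gives, for (\ref{lemma A.7-1}),
\[
\int_{B(\sigma_j,\eta/2)}|S_\varepsilon|^{p+1}\bigl(1+\bigl|\ln|S_\varepsilon|\bigr|\bigr)^{\frac{2N}{N+2s}}\max\bigl(1,|S_\varepsilon|^{-\frac{2N\varepsilon}{N+2s}}\bigr)\,dx\;\le\;C|\ln\varepsilon|^{\frac{2N}{N+2s}},
\]
and an analogous bound with $N/(2s)$ in place of $2N/(N+2s)$ for (\ref{lemma A.7-2}). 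Multiplying by $\varepsilon^{2N/(N+2s)}$, respectively $\varepsilon^{N/(2s)}$, and extracting the corresponding root produces the desired $C\varepsilon|\ln\varepsilon|$ on both sides.

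The contribution from the far-from-bubbles region $\Omega\setminus\bigcup_j B(\sigma_j,\eta/2)$ is handled exactly as in the proof of Lemma \ref{(M17)-Lemma A}: the pointwise bound $w_{\lambda_i\varepsilon^{\alpha_0},\sigma_i}(x)\le C(\lambda_i\varepsilon^{\alpha_0})^{(N-2s)/2}$ on this set shows that $|S_\varepsilon|$ is of order $\varepsilon^{(N-2s)\alpha_0/2}$ there, so the integrand is already of order $\varepsilon^{N\alpha_0}$ and the logarithmic correction only thickens this by $|\ln\varepsilon|^{\text{power}}$, which is much smaller than the targeted bound.

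The main obstacle I anticipate is the factor $\max(1,|S_\varepsilon|^{-\varepsilon})$ in the low-amplitude regime $|S_\varepsilon|\ll 1$: it has the potential to break integrability near $\partial\Omega_\varepsilon$ and at transitions between bubbles, where $S_\varepsilon$ can vanish. The remedy is that this factor only degrades the exponent of $|S_\varepsilon|$ by a vanishing amount $\varepsilon\cdot\frac{2N}{N+2s}$ (or $\varepsilon\cdot\frac{N}{2s}$), so for $\varepsilon$ small enough the effective power of $|S_\varepsilon|$ remains strictly larger than $p+1-\mathrm{o}(1)$, keeping the integral uniformly bounded and allowing $\max(1,|S_\varepsilon|^{-\varepsilon})$ to be absorbed into the constant. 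Once this regularity issue is handled, everything else is routine computation patterned on Lemma \ref{(M17)-Lemma A}.
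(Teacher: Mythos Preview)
Your approach is essentially the same as the paper's: both proofs apply a mean-value identity to write $f_\varepsilon(S_\varepsilon)-f_0(S_\varepsilon)=\varepsilon\,|S_\varepsilon|^{p-t\varepsilon}\ln|S_\varepsilon|$ (the paper uses the Lagrange form while you use the integral form), then rescale to $\Omega$ via $x=\varepsilon^{\alpha_0}y$, split $\Omega$ into the balls $B(\sigma_j,\eta/2)$ and their complement, and estimate each piece using the bubble profiles exactly as in Lemma~\ref{(M17)-Lemma A}. Your discussion of the low-amplitude regime (where $S_\varepsilon$ may vanish) is a bit more explicit than the paper's, but the underlying computation and the resulting $C\varepsilon|\ln\varepsilon|$ bound are identical.
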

\begin{proof}  Let us prove (\ref{lemma A.7-1}). The proof of (\ref{lemma A.7-2}) is similar.  Since $(\mbox{\boldmath $\lambda$},\mbox{\boldmath $\sigma$})\in\mathcal{O}_\eta$ it holds $|\sigma_i-\sigma_j|>\eta$ for any $i\neq j(i, j=1,...,k)$. We have
\begin{align*}
&\left\|f_\varepsilon\left(\sum\limits_{i=1}^ka_i\mathcal{P}_\varepsilon w_i\right)-f_0\left(\sum\limits_{i=1}^ka_i\mathcal{P}_\varepsilon w_i\right)\right\|_{L^{\frac{2N}{N+2s}}(\Omega_\varepsilon)}^{\frac{2N}{N+2s}}\\
&=\int_{\Omega_\varepsilon}\left|\left|\sum\limits_{i=1}^ka_i\mathcal{P}_\varepsilon w_i\right|^{p-\varepsilon}-\left|\sum\limits_{i=1}^ka_i\mathcal{P}_\varepsilon w_i\right|^p\right|^{\frac{2N}{N+2s}}dx\\
&=\int_{\Omega_\varepsilon}\left|\varepsilon\left|\sum\limits_{i=1}^ka_i\mathcal{P}_\varepsilon w_i\right|^{p-t\varepsilon}\ln\left|\sum\limits_{i=1}^ka_i\mathcal{P}_\varepsilon w_i\right|\right|^{\frac{2N}{N+2s}}dx\ \ \ ( \mbox{set}\ \ y=\varepsilon^{\alpha_0}x)\\
&=\varepsilon^{\frac{2N}{N+2s}(\frac{p-t\varepsilon}{2}+1)-\frac{N}{N-2s}}\int_{\Omega}\left|\left|\sum\limits_{i=1}^ka_i\mathcal{P}_\varepsilon w_{\lambda_i\varepsilon^{\alpha_0},\sigma_i}\right|^{p-t\varepsilon}\ln\left|\varepsilon^{\frac{(N-2s)\alpha_0}{2}}\sum\limits_{i=1}^ka_i\mathcal{P}_\varepsilon w_{\lambda_i\varepsilon^{\alpha_0},\sigma_i}\right|\right|^{\frac{2N}{N+2s}}dy\\
&=\varepsilon^{\frac{2N}{N+2s}(\frac{p-t\varepsilon}{2}+1)-\frac{N}{N-2s}}\Bigg[\sum_{j=1}^k\int_{B(\sigma_j,\eta/2)}\left|\left|\sum\limits_{i=1}^ka_i\mathcal{P}_\varepsilon w_{\lambda_i\varepsilon^{\alpha_0},\sigma_i}\right|^{p-t\varepsilon}\ln\left|\varepsilon^{\frac{(N-2s)\alpha_0}{2}}\sum\limits_{i=1}^ka_i\mathcal{P}_\varepsilon w_{\lambda_i\varepsilon^{\alpha_0},\sigma_i}\right|\right|^{\frac{2N}{N+2s}}dy\\
&\ \ \ \ +\int_{\Omega\setminus{\bigcup\limits_{j=1}^kB(\sigma_j,\eta/2)}}\left|\left|\sum\limits_{i=1}^ka_i\mathcal{P}_\varepsilon w_{\lambda_i\varepsilon^{\alpha_0},\sigma_i}\right|^{p-t\varepsilon}\ln\left|\varepsilon^{\frac{(N-2s)\alpha_0}{2}}\sum\limits_{i=1}^ka_i\mathcal{P}_\varepsilon w_{\lambda_i\varepsilon^{\alpha_0},\sigma_i}\right|\right|^{\frac{2N}{N+2s}}dy\Bigg].\\
\end{align*}

Firstly, we have
\begin{align*}
&\int_{\Omega\setminus{\bigcup\limits_{j=1}^kB(\sigma_j,\eta/2)}}\varepsilon^{\frac{2N}{N+2s}(\frac{p-t\varepsilon}{2}+1)-\frac{N}{N-2s}}\left|\left|\sum\limits_{i=1}^ka_i\mathcal{P}_\varepsilon w_{\lambda_i\varepsilon^{\alpha_0},\sigma_i}\right|^{p-t\varepsilon}\ln\left|\varepsilon^{\frac{(N-2s)\alpha_0}{2}}\sum\limits_{i=1}^ka_i\mathcal{P}_\varepsilon w_{\lambda_i\varepsilon^{\alpha_0},\sigma_i}\right|\right|^{\frac{2N}{N+2s}}dy\\
&\leq C\varepsilon^{\frac{2N}{N+2s}(\frac{p-t\varepsilon}{2}+1)-\frac{N}{N-2s}}\sum\limits_{i=1}^k(\lambda_i\varepsilon^{\alpha_0})^{\frac{N-2s}{2}
(p-t\varepsilon)\frac{2N}{N+2s}}\left|\ln\left(\varepsilon^{\frac{(N-2s)\alpha_0}{2}}\sum
\limits_{i=1}^k(\lambda_i\varepsilon^{\alpha_0})^{\frac{N-2s}{2}}\right)\right|^{\frac{2N}{N+2s}}\\
&\leq C |\varepsilon\ln\varepsilon|^{\frac{2N}{N+2s}}
\end{align*}
Moreover, using the substitution $x-\sigma_i=\lambda_i\varepsilon^{\alpha_0}z$, we get
\begin{align*}
&\int_{B(\sigma_j,\eta/2)}\varepsilon^{\frac{2N}{N+2s}(\frac{p-t\varepsilon}{2}+1)-\frac{N}{N-2s}}\left|\left|\sum\limits_{i=1}^ka_i\mathcal{P}_\varepsilon w_{\lambda_i\varepsilon^{\alpha_0},\sigma_i}\right|^{p-t\varepsilon}\ln\left|\varepsilon^{\frac{(N-2s)\alpha_0}{2}}\sum\limits_{i=1}^ka_i\mathcal{P}_\varepsilon w_{\lambda_i\varepsilon^{\alpha_0},\sigma_i}\right|\right|^{\frac{2N}{N+2s}}dy\\
&\leq C\sum\limits_{i=1}^k\int_{B(\sigma_j,\eta/2)}\varepsilon^{\frac{2N}{N+2s}(\frac{p-t\varepsilon}{2}+1)-\frac{N}{N-2s}}
\left|w_{\lambda_i\varepsilon^{\alpha_0},\sigma_i}\right|^{\frac{2N(p-t\varepsilon)}{N+2s}}
\left|\ln\left|\varepsilon^{\frac{(N-2s)\alpha_0}{2}}\sum\limits_{i=1}^k\mathcal{P}_\varepsilon w_{\lambda_i\varepsilon^{\alpha_0},\sigma_i}\right|\right|^{\frac{2N}{N+2s}}dy\\
&\leq C\sum\limits_{i=1}^k\int_{\mathbb R^N}\varepsilon^{\frac{2N}{N+2s}(\frac{p-t\varepsilon}{2}+1)-\frac{N}{N-2s}}(\lambda_i\varepsilon^{\alpha_0})^{N-\frac{N(N-2s)}{N+2s}(p-t\varepsilon)}
\left(\frac{1}{1+z^2}\right)^{\frac{N(N-2s)}{N+2s}(p-t\varepsilon)}\\
&\ \ \ \times\left|\ln\left|\lambda_i^{-\frac{N-2s}{2}}
\Bigg[\left(\frac{1}{1+z^2}\right)^\frac{N-2s}{2}+\sum\limits_{j=1, i\neq j}^{k}\left(\frac{\lambda_i\varepsilon^{\alpha_0}\lambda_j\varepsilon^{\alpha_0}}{(\lambda_j\varepsilon^{\alpha_0})^2+|\lambda_i\varepsilon^{\alpha_0}z+\sigma_i-
\sigma_j|^2}\right)^\frac{N-2s}{2}\Bigg]\right|\right|^{\frac{2N}{N+2s}}dz\\
&\leq C |\varepsilon\ln\varepsilon|^{\frac{2N}{N+2s}}.
\end{align*}
\end{proof}


\end{document}